\documentclass[reqno]{amsart}

\usepackage[T1]{fontenc}
\usepackage{lmodern}
\usepackage{amsmath,amssymb,amsthm,mathtools}
\usepackage{microtype}
\usepackage{cite}
\usepackage{xcolor}
\usepackage{hyperref}
\usepackage[noabbrev]{cleveref}

\numberwithin{equation}{section}
\allowdisplaybreaks

\theoremstyle{plain}
\newtheorem{theorem}{Theorem}[section]
\newtheorem{lemma}[theorem]{Lemma}
\newtheorem{proposition}[theorem]{Proposition}
\newtheorem{corollary}[theorem]{Corollary}
\theoremstyle{definition}
\newtheorem{definition}[theorem]{Definition}
\theoremstyle{remark}
\newtheorem{remark}[theorem]{Remark}

\crefname{theorem}{Theorem}{Theorems}
\Crefname{theorem}{Theorem}{Theorems}
\crefname{lemma}{Lemma}{Lemmas}
\Crefname{lemma}{Lemma}{Lemmas}
\crefname{proposition}{Proposition}{Propositions}
\Crefname{proposition}{Proposition}{Propositions}
\crefname{corollary}{Corollary}{Corollaries}
\Crefname{corollary}{Corollary}{Corollaries}
\crefname{definition}{Definition}{Definitions}
\Crefname{definition}{Definition}{Definitions}
\crefname{remark}{Remark}{Remarks}
\Crefname{remark}{Remark}{Remarks}
\crefname{section}{Section}{Sections}
\Crefname{section}{Section}{Sections}

\crefformat{section}{Section~#2#1#3}
\Crefformat{section}{Section~#2#1#3}
\crefformat{subsection}{Section~#2#1#3}
\Crefformat{subsection}{Section~#2#1#3}
\crefformat{theorem}{Theorem~#2#1#3}
\Crefformat{theorem}{Theorem~#2#1#3}
\crefformat{lemma}{Lemma~#2#1#3}
\Crefformat{lemma}{Lemma~#2#1#3}
\crefformat{proposition}{Proposition~#2#1#3}
\Crefformat{proposition}{Proposition~#2#1#3}
\crefformat{corollary}{Corollary~#2#1#3}
\Crefformat{corollary}{Corollary~#2#1#3}
\crefformat{definition}{Definition~#2#1#3}
\Crefformat{definition}{Definition~#2#1#3}
\crefformat{remark}{Remark~#2#1#3}
\Crefformat{remark}{Remark~#2#1#3}

\newcommand{\R}{\mathbb R}
\newcommand{\M}{\mathcal M}
\newcommand{\I}{\mathbf I}
\newcommand{\differential}{\mathop{}\!\mathrm d}
\newcommand{\Tail}{\operatorname{Tail}}
\DeclareMathOperator{\divergence}{div}

\newcommand{\Lb}{L_b}
\newcommand{\rev}[1]{#1}

\makeatletter
\@namedef{subjclassname@2020}{%
	\textup{2020} Mathematics Subject Classification}
\makeatother

\hypersetup{
  pdftitle={Gradient regularity and potential estimates for fractional drift--diffusion equations in the critical and subcritical ranges},
  pdfauthor={Qi Xue, Chao Zhang},
  pdfsubject={2020 Mathematics Subject Classification: 35R11, 35B65, 47G20},
  pdfkeywords={fractional Laplacian, drift-diffusion equation, Schauder estimate,
    critical drift, gradient regularity, measure data, Riesz potential,
    boundary regularity}
}

\begin{document}

\title[Regularity for fractional drift--diffusion equations]
{Gradient regularity and potential estimates for fractional
drift--diffusion equations in the \rev{critical and subcritical ranges}}

\author[Q. Xue]{Qi Xue}
\address{Qi Xue \hfill\break School of Mathematics, Harbin Institute of Technology,
Harbin 150001, P.R. China}
\email{18332276756@163.com}

\author[C. Zhang]{Chao Zhang$^*$}
\address{Chao Zhang \hfill\break School of Mathematics and Institute for Advanced Study in Mathematics,
Harbin Institute of Technology, Harbin 150001, P.R. China}
\email{czhangmath@hit.edu.cn}
\thanks{$^*$Corresponding author.}

\subjclass[2020]{Primary 35R11; Secondary 35B65, 47G20}
\keywords{fractional Laplacian; drift--diffusion equation; Schauder estimate; gradient regularity; measure data; Riesz potential; boundary regularity}

\begin{abstract}
We establish scale-invariant interior $C^{1,\alpha}$ estimates for bounded
viscosity solutions of
$(-\Delta)^su+b\cdot\nabla u=f$ for $s\in[1/2,1)$ with locally H\"older
$b$ and $f$.  The critical case uses Silvestre's parabolic theorem; the
subcritical case uses Schauder estimates and interpolation.  Applying this
viscosity estimate to drifted Green sections, for finite Radon data above
the critical order we obtain sharp solution and gradient potentials of
orders $2s$ and $2s-1$, together with weak-*--to--strong local $W^{1,1}$
stability; hence the Green-potential SOLA is approximation-independent.
For the normalized whole-space kernels, we identify the classical
second-order limits as $s\uparrow1$, including the logarithmic kernel in
dimension two; at the critical order, the whole-space gradient becomes a
zero-order singular integral.  For zero-exterior problems with compactly
supported drift, we also prove
$u/d^s\in C^{s-\varepsilon}(\overline\Omega)$ and identify the obstruction
when the drift reaches the boundary.
\end{abstract}

\maketitle

\section{Introduction}\label{sec:introduction}

Let $n\ge2$, let $\Omega\subset\R^n$ be open, and consider
\begin{equation}\label{eq:equation}
 (-\Delta)^s u+b(x)\cdot\nabla u=f(x)
 \qquad\text{in }\Omega,
\end{equation}
where $s\in[1/2,1)$.  We study the interior differentiability of bounded
viscosity solutions when $b,f$ are locally H\"older continuous.

The two terms on the left-hand side of \eqref{eq:equation} have orders
$2s$ and $1$, respectively.  Under the change of variables
$u_R(x)=u(x_0+Rx)$, the rescaled drift is
\[
 b_R(x)=R^{2s-1}b(x_0+Rx).
\]
Thus $s>1/2$ is subcritical: the drift becomes a lower-order term at small
scales.
At $s=1/2$, however, the drift and diffusion have the same order and no
small factor is produced by scaling.  This distinction is reflected in the
proof.

Sharp interior Schauder estimates for translation-invariant stable operators
were established by Ros-Oton and Serra~\cite{RosOtonSerra2016}.  In the case
of the fractional Laplacian, bounded exterior data and a $C^\alpha$
right-hand side give interior $C^{2s+\alpha}$ regularity, provided the exponent
is not an integer.  When $s=1/2$, an $L^\infty$ right-hand side gives in
general only $C^{1-\varepsilon}$ regularity.  Thus an endpoint Lipschitz
estimate cannot depend only on the $L^\infty$ norm of the right-hand side.

For equations with drift, Silvestre~\cite{Silvestre2012Diff} proved
differentiability estimates for the critical and supercritical parabolic
equation
\[
 u_t+b\cdot\nabla u+(-\Delta)^s u=f.
\]
At $s=1/2$, spatial $C^\alpha$ regularity of $b$ and $f$ implies spatial
$C^{1,\alpha}$ regularity of bounded solutions.  Since a stationary solution
of \eqref{eq:equation} is a time-independent parabolic solution, this theorem
gives the critical estimate used below.  Related H\"older, probabilistic, and
parabolic Schauder estimates can be found in
\cite{Silvestre2012Holder,Priola2012,Chaudru2020}.  Priola~\cite{Priola2012}
studied SDEs driven by a nondegenerate symmetric stable process using global
resolvent Schauder estimates and an It\^o--Tanaka transformation.  At stable
order one, that argument is based on localization around constant drifts.  In
the present paper, the critical estimate is obtained from Silvestre's PDE
theorem, whereas for $s>1/2$ the drift is absorbed by interpolation as a
lower-order term.

Measure-data problems for nonlocal equations have been studied extensively.
Kuusi, Mingione, and Sire~\cite{KuusiMingioneSire2015} developed existence,
regularity, and nonlinear potential estimates for SOLA.  Kuusi, Nowak, and
Sire~\cite{KuusiNowakSire2022} proved Sobolev and H\"older regularity and
first-order Riesz-potential estimates for linear equations with
H\"older-dependent kernels.  Sharp gradient-potential estimates for nonlinear
nonlocal equations were subsequently obtained by Diening, Kim, Lee, and
Nowak~\cite{DieningKimLeeNowak2025}.  These works treat nonlocal principal
parts without an explicit first-order drift; their gradient estimates are
based on nonlocal excess decay and difference quotients.

For Kato-class drifts, Bogdan and Jakubowski~\cite{BogdanJakubowski2012}
proved comparability of the drifted and drift-free Green functions, while
Chen, Kim, and Song~\cite{ChenKimSong2012} developed the corresponding heat
kernel theory.  These results provide the zeroth-order Green-function bounds
used below.  For measure data, one also needs a viscosity characterization of
Green sections, a derivative estimate in the first Green variable, and
stability of the Green operator under weak-* convergence of measures.

The purpose of this paper is to establish these properties for the
nonsymmetric operator $(-\Delta)^s+b\cdot\nabla$ and to apply them to the
corresponding measure-data problem.  We first prove a scale-invariant interior
$C^{1,\alpha}$ estimate, with explicit dependence on the dimensionless size of
the drift.  At $s=1/2$, the estimate is obtained by applying Silvestre's
parabolic theorem to stationary solutions.  For $s>1/2$, it follows instead
from the drift-free Schauder estimate and an interpolation argument that
absorbs the first-order term.  A regularization compatible with viscosity
convergence then gives the estimate for bounded viscosity solutions of
\eqref{eq:equation}.

We next show that probabilistically harmonic Green sections satisfy the
pasted-test viscosity property needed to apply the interior estimate.  It
follows that
\[
 |\nabla_xG_b^D(x,y)|\lesssim |x-y|^{2s-n-1}.
\]

Using this estimate, we extend the Green operator to finite signed Radon
measures and prove potential bounds of orders $2s$ and $2s-1$ for the solution
and its gradient, respectively.  We also prove sequential continuity from
weak-* convergence of measures to strong convergence in
$W^{1,1}_{\mathrm{loc}}(D)$.  This gives a Green-potential SOLA that is
independent of the approximating sequence and is unique in the class under
consideration.  In addition, we determine the normalized limits of the
whole-space kernels as $s\uparrow1$ and prove boundary regularity when the
drift is supported away from the boundary.

The paper is organized as follows.  The main results are stated in
\cref{sec:main-results}.  In \cref{sec:preliminaries}, we discuss the
solution notion, record scaling, and explain the scope of the weak
formulation.  The analytic ingredients for the interior argument are collected
in \cref{sec:estimates}, and \cref{sec:proof} proves
\cref{thm:main}.  Measure data, Green kernels, potential estimates, and the
limit $s\uparrow1$ are treated in \cref{sec:measure}.  Boundary regularity
and the obstruction created by the drift are discussed in
\cref{sec:boundary}.

\section{Main results}\label{sec:main-results}

For convenience, all principal statements are presented together in this
section.  Their proofs are given in the subsequent sections.  The first
theorem is the interior
Schauder estimate.  The next two theorems identify the sharp potential
orders in the whole space and for the drifted Dirichlet problem.  The fourth
theorem records the local limit as $s\uparrow1$, and the last theorem gives
the boundary consequence when the drift is supported away from the
boundary.  \rev{The interior estimate is proved first and is then applied to
the Green kernel after the required viscosity characterization has been
established.}

Throughout, $C$ denotes a positive constant which may change from line to
line, and subscripts record additional dependence.  We write $A\lesssim B$
for $A\le CB$.  If $\sigma>0$ is not an integer and
$k=\lfloor\sigma\rfloor$, we use the standard convention
$C^\sigma=C^{k,\sigma-k}$.  All H\"older norms are full norms unless a
seminorm is displayed explicitly.
For later localization arguments we use
\begin{equation}\label{eq:tail-definition}
 \Tail_s(v;x_0,R):=R^{2s}
 \int_{\R^n\setminus B_R(x_0)}
 \frac{|v(y)|}{|x_0-y|^{n+2s}}\,\differential y.
\end{equation}

For the measure-data statements, if $0<\beta<n$, $x\in\R^n$, and
$R\in(0,\infty)$, we use the augmented truncated Riesz potential
\begin{equation}\label{eq:riesz-potential-main}
 \I_\beta^{|\mu|}(x,R)
 :=\int_0^R
 \frac{|\mu|(B_\rho(x))}{\rho^{n-\beta}}\,\frac{\differential\rho}{\rho}
 +\frac{|\mu|(B_R(x))}{R^{n-\beta}}.
\end{equation}
For the untruncated potential we set
\[
 \I_\beta^{|\mu|}(x)
 :=\int_0^\infty
 \frac{|\mu|(B_\rho(x))}{\rho^{n-\beta}}
 \frac{\differential\rho}{\rho}
\]
whenever this integral is finite.  The terminal term in the truncated
definition is included to retain equivalence with the kernel
form on $B_R(x)$.

Whenever a fundamental solution is used, $(-\Delta)^s$ is normalized to
have Fourier symbol $|\xi|^{2s}$, and we set
\begin{equation}\label{eq:fundamental-normalized}
 \Phi_{n,s}(x)=\kappa_{n,s}|x|^{2s-n},\qquad
 \kappa_{n,s}:=
 \frac{\Gamma\!\left(\frac{n-2s}{2}\right)}
 {2^{2s}\pi^{n/2}\Gamma(s)}.
\end{equation}

\begin{theorem}[interior gradient Schauder estimate]\label{thm:main}
Let $n\ge2$, $s\in[1/2,1)$ and $\gamma\in(0,1)$.  Let
$B_{2R}(x_0)\Subset\Omega$, with $0<R\le1$, and suppose that
\[
 u\in L^\infty(\R^n)\cap C(\Omega)
\]
is a viscosity solution of \eqref{eq:equation} in $B_{2R}(x_0)$.
Assume that
\[
 b,f\in C^\gamma(B_{2R}(x_0)).
\]
Then $u\in C^{1,\alpha}(B_{R/2}(x_0))$ for every
$\alpha\in(0,\gamma)$.  Moreover, the following estimate holds:
\begin{align}\label{eq:main-estimate}
 &\|\nabla u\|_{L^\infty(B_{R/2}(x_0))}
 +R^\alpha[\nabla u]_{C^\alpha(B_{R/2}(x_0))}\notag\\
 &\quad\le \frac{C}{R}\bigl(
 \|u\|_{L^\infty(\R^n)}
 +R^{2s}\|f\|_{L^\infty(B_{2R}(x_0))}
 +R^{2s+\gamma}[f]_{C^\gamma(B_{2R}(x_0))}\bigr),
\end{align}
where $C$ depends only on $n,s,\gamma,\alpha$ and, monotonically, on the
dimensionless drift size
\begin{equation}\label{eq:scaled-b}
 \mathcal B_R:=R^{2s-1}\|b\|_{L^\infty(B_{2R}(x_0))}
 +R^{2s+\gamma-1}[b]_{C^\gamma(B_{2R}(x_0))}.
\end{equation}
In particular, an admissible exponent $\alpha>0$ always exists.
\end{theorem}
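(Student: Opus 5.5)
By translation and dilation we reduce to $x_0=0$ and $R=1$. Set $u_R(x)=u(Rx)$, $b_R(x)=R^{2s-1}b(Rx)$ and $f_R(x)=R^{2s}f(Rx)$. Then $u_R$ is a viscosity solution of $(-\Delta)^su_R+b_R\cdot\nabla u_R=f_R$ in $B_2$. The $C^\gamma(B_2)$ norm of $b_R$ is exactly $\mathcal B_R$ from \eqref{eq:scaled-b}, and the $C^\gamma(B_2)$ norm of $f_R$ is the bracket on the right of \eqref{eq:main-estimate} without the $\|u\|_{L^\infty}$ term. Since $\nabla u_R=R\,\nabla u(R\cdot)$, an estimate at $R=1$ gives \eqref{eq:main-estimate}, with a constant depending monotonically on $\mathcal B_R$. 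It therefore suffices to prove
\[
\|\nabla u\|_{C^\alpha(B_{1/2})}\le C(\mathcal B)\bigl(\|u\|_{L^\infty(\R^n)}+\|f\|_{C^\gamma(B_2)}\bigr).
\]
We do this first as an a priori estimate for smooth solutions, and then pass to viscosity solutions by regularization.

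\emph{Critical case $s=1/2$.} We regard $u$ as a time-independent solution of the parabolic equation $u_t+b\cdot\nabla u+(-\Delta)^{1/2}u=f$ in $(-1,0]\times B_2$. Silvestre's theorem \cite{Silvestre2012Diff} then gives spatial $C^{1,\alpha}$ regularity of $u$ in a smaller cylinder, with a constant depending on $\|b\|_{C^\gamma}$ and a bound linear in $\|u\|_{L^\infty}+\|f\|_{C^\gamma}$. Because $u$ does not depend on $t$, this is exactly the desired estimate on $B_{1/2}$. Silvestre states the result for viscosity solutions, so no regularization is needed here, provided his solution notion agrees with ours. We would check that compatibility in \cref{sec:preliminaries}, and use a covering argument to pass from his cylinders to $B_{1/2}$.

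\emph{Subcritical case $s>1/2$.} We write $(-\Delta)^su=f-b\cdot\nabla u=:g$ and use the Ros-Oton--Serra interior Schauder estimate \cite{RosOtonSerra2016} on nested balls $B_r\subset B_\rho$, $1/2\le r<\rho\le1$:
\[
[u]_{C^{2s+\alpha}(B_r)}\le \frac{C}{(\rho-r)^{N}}\bigl(\|u\|_{L^\infty(\R^n)}+\|g\|_{C^\alpha(B_\rho)}\bigr).
\]
Here $\alpha<\gamma$ is chosen so that $2s+\alpha\notin\mathbb Z$ and $2s+\alpha>1+\alpha$; the latter holds because $s>1/2$. We then bound $\|b\cdot\nabla u\|_{C^\alpha}\le C\|b\|_{C^\gamma}\|u\|_{C^{1,\alpha}}$. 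Since $1+\alpha<2s+\alpha$, interpolation gives
\[
\|u\|_{C^{1,\alpha}(B_\rho)}\le \varepsilon\,[u]_{C^{2s+\alpha}(B_\rho)}+C\varepsilon^{-\theta}\|u\|_{L^\infty}.
\]
Choosing $\varepsilon\sim(\rho-r)^N/\mathcal B$ and applying the standard absorption lemma to $\phi(r)=[u]_{C^{2s+\alpha}(B_r)}$ yields the estimate, with polynomial dependence on $\mathcal B$. One could also run a finite bootstrap from $C^{2s-\varepsilon}$, which already controls $\nabla u$ in $C^{2s-1-\varepsilon}$, but the absorption argument is cleaner.

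\emph{Passage to viscosity solutions.} For $s>1/2$ the a priori bound needs $\phi<\infty$, which we cannot assume for a general viscosity solution. We would proceed as follows.
\begin{itemize}
\item Mollify $b$ and $f$ to $b_\varepsilon$, $f_\varepsilon$, keeping uniform $C^\gamma$ bounds.
\item Solve the Dirichlet problem $(-\Delta)^sv_\varepsilon+b_\varepsilon\cdot\nabla v_\varepsilon=f_\varepsilon$ in $B_{3/2}$, with exterior datum $u$ (or a mollification of it). Existence comes from Perron's method, and since $s>1/2$ the drift is lower order, so $v_\varepsilon$ is smooth inside by the classical theory.
\item Use the a priori estimate to get uniform $C^{1,\alpha}$ bounds on $v_\varepsilon$.
\item Use stability of viscosity solutions together with a comparison principle to conclude $v_\varepsilon\to u$.
\end{itemize}

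The main obstacle is this last step: comparison for viscosity solutions when a merely continuous drift acts on $\nabla u$. Uniqueness for $b\in C^\gamma$ is delicate. We would first try to prove it by exploiting that $s>1/2$ makes the drift subcritical. A Jensen--Ishii type argument with the regularized doubling $|x-y|^{2}/\delta$ should handle $b$ Hölder of order greater than $1-2s+1$, which is automatic since $\gamma>0>2-2s-1$ is not enough in general. If it fails, we would instead work with uniform continuity moduli: the difference $w=u-v_\varepsilon$ solves an inequality with the Pucci-type extremal operator plus the error $|b-b_\varepsilon||\nabla v_\varepsilon|\to0$, and we would close by uniqueness for $b\in C^\gamma$ in the subcritical regime.
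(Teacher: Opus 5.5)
Your architecture matches the paper's at every stage. It uses the scaling with $\|b_R\|_{C^\gamma(B_2)}=\mathcal B_R$, and Silvestre's theorem applied to the time-independent extension at $s=1/2$, with no regularization. For $s>1/2$ it uses the Ros-Oton--Serra estimate, the product bound, interpolation and nested-ball absorption for smooth solutions, followed by mollification of $b,f$, Perron solutions and stability. (The theorem is claimed for every $\alpha<\gamma$, so when $2s+\alpha=2$ you should run the a priori argument at an auxiliary $\beta\in(\alpha,\gamma)$ with $2s+\beta\notin\mathbb N$, as the paper does.)

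The genuine gap is the final step, which you leave open: nothing in the proposal shows that the approximants converge to the given $u$. The Jensen--Ishii sentence is garbled, and by your own account it would only cover $\gamma>2-2s$. The fallback ends by appealing to ``uniqueness for $b\in C^\gamma$'', which is exactly what is in question, so it is circular. The paper closes this step by invoking comparison for bounded viscosity solutions (via Barles--Imbert) to identify the half-relaxed limits with $u$.

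The missing idea is that you never need comparison between two rough solutions with a H\"older drift. Instead, compare $u$ with the smooth $v_\varepsilon$ at fixed $\varepsilon$. Take exterior datum $u$ on $B:=B_{3/2}$ and set $w_\varepsilon=u-v_\varepsilon$, which vanishes outside $B$. If $\phi$ touches $w_\varepsilon$ from above at $x^*\in B$, then $\phi+v_\varepsilon$ is an admissible $C^2$ test function for $u$, and $(\phi+v_\varepsilon)^{r,u}_{x^*}=\phi^{r,w_\varepsilon}_{x^*}+v_\varepsilon$. Subtracting the classical equation for $v_\varepsilon$ gives
\[
 (-\Delta)^s\phi^{r,w_\varepsilon}_{x^*}(x^*)+b(x^*)\cdot\nabla\phi(x^*)
 \le (f-f_\varepsilon)(x^*)-(b-b_\varepsilon)(x^*)\cdot\nabla v_\varepsilon(x^*).
\]
At a point where $w_\varepsilon$ attains a positive maximum $m_\varepsilon$, take $\phi\equiv m_\varepsilon$. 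The drift term drops out, and the left-hand side is at least $c_0m_\varepsilon$ with $c_0=c_0(n,s)>0$, because $w_\varepsilon=0$ off $B$.

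A uniform boundary modulus for $v_\varepsilon$ comes from standard barriers, since $b_\varepsilon$ is uniformly bounded and lower order near $\partial B$ when $s>1/2$. It forces $\operatorname{dist}(x^*,\partial B)\ge\delta(\eta)$ whenever $m_\varepsilon\ge\eta$. On that set, your own a priori estimate, together with the maximum-principle bound on $\|v_\varepsilon\|_{L^\infty}$, bounds $|\nabla v_\varepsilon|$ by some $C_\eta$ independent of $\varepsilon$. Hence $c_0m_\varepsilon\le\|f-f_\varepsilon\|_{L^\infty(B)}+C_\eta\|b-b_\varepsilon\|_{L^\infty(B)}\to0$. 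The supersolution property gives the symmetric bound, so $v_\varepsilon\to u$ uniformly.

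With this step supplied your proof is complete and follows the paper's route. It also sidesteps the comparison principle for semicontinuous solutions with merely H\"older drift, which is the delicate point you rightly flagged.
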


\begin{remark}[drift dependence and endpoint exponents]
The dependence on the dimensionless drift size $\mathcal B_R$ in
\eqref{eq:scaled-b} is essential.  At the critical endpoint $s=1/2$, the
H\"older modulus of $b$ is part of the principal-order information and
cannot be discarded.  We do not claim that the exponent in
\cref{thm:main} is optimal.  The theorem is stated for $\alpha<\gamma$ in
order to avoid endpoint and integer H\"older spaces.  A sharper endpoint
formulation would naturally use the corresponding Zygmund scale.
\end{remark}

\begin{theorem}[whole-space fractional Poisson equation]
\label{thm:whole-space-potential}
Let $n\ge2$, $1/2<s<1$, and let $\mu\in\M(\R^n)$ be a compactly
supported finite signed Radon measure.  With the standard normalization,
define
\begin{equation}\label{eq:whole-space-solution}
 u(x)=\int_{\R^n}\Phi_{n,s}(x-y)\,\differential\mu(y),
 \qquad \Phi_{n,s}(z)=\kappa_{n,s}|z|^{2s-n}.
\end{equation}
Then $u\in L^1_{\mathrm{loc}}(\R^n)$ and $(-\Delta)^su=\mu$ in
$\mathcal D'(\R^n)$.  Each of the following estimates holds at every point
where its right-hand side is finite:
\begin{equation}\label{eq:whole-space-potential-estimates}
 |u(x)|\le C\I_{2s}^{|\mu|}(x),\qquad
 |\nabla u(x)|\le C\I_{2s-1}^{|\mu|}(x).
\end{equation}
The gradient in the second estimate is defined by the absolutely convergent
differentiated potential.  Both orders are sharp in the local scaling sense,
as shown by taking $\mu=\delta_0$.
\end{theorem}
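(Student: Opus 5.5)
The plan is to verify the statement directly from the explicit kernel, in four steps.

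\emph{Step 1: local integrability.} Since $|\mu|$ is finite and compactly supported and $0<n-2s<n$, Tonelli's theorem gives, for every ball $B$,
\[
 \int_B\int_{\R^n}|x-y|^{2s-n}\,\differential|\mu|(y)\,\differential x
 \le \sup_{y\in\operatorname{supp}\mu}\int_B|x-y|^{2s-n}\,\differential x\,|\mu|(\R^n)<\infty .
\]
Hence the integral defining $u$ converges absolutely for a.e.\ $x$, and $u\in L^1_{\mathrm{loc}}(\R^n)$.

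\emph{Step 2: the distributional equation.} For $\varphi\in C_c^\infty(\R^n)$ we have $(-\Delta)^s\varphi\in C^\infty$ with $|(-\Delta)^s\varphi(x)|\lesssim(1+|x|)^{-n-2s}$. We also use the classical identity $\Phi_{n,s}*(-\Delta)^s\varphi=\varphi$: with the normalization \eqref{eq:fundamental-normalized}, $\widehat{\Phi_{n,s}}=|\xi|^{-2s}$, and $|\xi|^{-2s}$ is locally integrable because $2s<n$. The pairing $\langle u,(-\Delta)^s\varphi\rangle$ is then well defined, since
\[
 \int\!\!\int|x-y|^{2s-n}(1+|x|)^{-n-2s}\,\differential x\,\differential|\mu|(y)<\infty,
\]
because $\int|x-y|^{2s-n}(1+|x|)^{-n-2s}\,\differential x$ is bounded uniformly for $y$ in the compact support of $\mu$. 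Fubini's theorem therefore gives
\[
 \langle u,(-\Delta)^s\varphi\rangle=\int\Phi_{n,s}*(-\Delta)^s\varphi\,\differential\mu=\int\varphi\,\differential\mu .
\]
This is the meaning of $(-\Delta)^su=\mu$ in $\mathcal D'(\R^n)$.

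\emph{Step 3: the potential bounds.} We have $|\nabla\Phi_{n,s}(z)|=\kappa_{n,s}(n-2s)|z|^{2s-n-1}$, and $0<n-(2s-1)<n$ because $s>1/2$. Both estimates then reduce to the layer-cake identity
\[
 \int|x-y|^{\beta-n}\,\differential|\mu|(y)=(n-\beta)\int_0^\infty\frac{|\mu|(B_\rho(x))}{\rho^{n-\beta}}\frac{\differential\rho}{\rho},\qquad 0<\beta<n .
\]
This follows from $r^{\beta-n}=(n-\beta)\int_r^\infty\rho^{\beta-n-1}\,\differential\rho$ and Tonelli's theorem. We apply it with $\beta=2s$ and with $\beta=2s-1$.

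\emph{Step 4: the gradient and sharpness.} At a point $x$ where $\I_{2s-1}^{|\mu|}(x)<\infty$, define $\nabla u(x):=\int\nabla\Phi_{n,s}(x-y)\,\differential\mu(y)$, which converges absolutely by Step 3. To see that this is the distributional gradient of $u$, apply Fubini again: $\int_B\int|x-y|^{2s-n-1}\,\differential|\mu|\,\differential x<\infty$ because $2s-n-1>-n$. Moreover, for fixed $y$, the distributional gradient of $\Phi_{n,s}(\cdot-y)$ is the pointwise one, since the singularity is integrable and the boundary term $\varepsilon^{n-1}\varepsilon^{2s-n}\to0$. For sharpness, take $\mu=\delta_0$. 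Then $u=\kappa_{n,s}|x|^{2s-n}$ and $|\nabla u|\simeq|x|^{2s-n-1}$, while $\I_\beta^{\delta_0}(x)=\int_{|x|}^\infty\rho^{\beta-n-1}\,\differential\rho=|x|^{\beta-n}/(n-\beta)$. So both estimates are equalities up to constants, and no smaller order $\beta$ is possible under local scaling.

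The only delicate point is the justification of Fubini in Step 2, where we need the decay of $(-\Delta)^s\varphi$ and the identity $\Phi_{n,s}*(-\Delta)^s\varphi=\varphi$. We take both as classical consequences of the Fourier normalization.
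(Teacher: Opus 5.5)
Your proposal is correct and follows essentially the paper's proof: Tonelli for $u\in L^1_{\mathrm{loc}}$; Fubini justified by the local integrability of $\Phi_{n,s}$, the $O(|x|^{-n-2s})$ decay of $(-\Delta)^s\varphi$ and the fundamental-solution identity; the layer-cake identity with $\beta=2s$ and $\beta=2s-1$; and $\mu=\delta_0$ for sharpness. The only differences are that you identify the weak gradient by integrating by parts against each translate $\Phi_{n,s}(\cdot-y)$ with an explicit $\varepsilon^{2s-1}$ boundary term (where the paper truncates at $|x-y|>\varepsilon$ and passes to the $L^1_{\mathrm{loc}}$ limit), and a wording slip in the sharpness step: near the pole the $\delta_0$ example rules out a \emph{larger} order $\beta>2s$ (resp.\ $\beta>2s-1$), since $|x|^{2s-n}/|x|^{\beta-n}\to\infty$ as $x\to0$, not a smaller one.
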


\begin{definition}[\rev{Green-potential SOLA for the measure-data equation}]
\label{def:sola-measure}
Let $D\subset\R^n$ be bounded and let $\mu\in\M(D)$.  Call a sequence
$h_j\in C_c^\infty(D)$ an admissible approximation
of $\mu$ if
\[
 h_j\,\differential x\stackrel{*}{\rightharpoonup}\mu
 \quad\text{in }\M(\overline D),
 \qquad \sup_j\|h_j\|_{L^1(D)}<\infty,
\]
where measures on $D$ are extended by zero to $\partial D$.  A
function $u\in L^1(D)$, extended by zero to $\R^n\setminus D$, is a
\emph{Green-potential solution obtained as a limit of approximations}
(Green-potential SOLA) of
\begin{equation}\label{eq:measure-dirichlet-main}
 \begin{cases}
 (-\Delta)^su+b\cdot\nabla u=\mu&\text{in }D,\\
 u=0&\text{in }\R^n\setminus D,
 \end{cases}
\end{equation}
if there is an admissible approximation for which the Green potentials
\[
 u_j(x):=\int_DG_b^D(x,y)h_j(y)\,\differential y
\]
satisfy $u_j\to u$ in $L^1_{\mathrm{loc}}(D)$.  \rev{The Green kernel and
its zero-exterior operator are specified in
\cref{prop:green-framework}.}
The gradient is not included in this definition; its existence and its
estimate are conclusions of the next theorem.
\end{definition}

\begin{theorem}[\rev{Green operator on measures: stability and sharp bounds}]
\label{thm:measure-potential}
Let $n\ge2$, $1/2<s<1$, $\gamma\in(0,1)$, and let
$D\subset\R^n$ be a bounded $C^{1,1}$ domain.  Assume that
$b\in\mathcal K_n^{2s-1}\cap C^\gamma_{\mathrm{loc}}(D)$, where the Kato
class $\mathcal K_n^{2s-1}$ is defined in
\eqref{eq:kato-measure-section}.  Let
$\mu\in\M(D)$ be a finite signed Radon measure.
\begingroup
\rev{Then the Green
operator extends to a positive linear map}
\[
 \mathcal G_b:\M(D)\longrightarrow
 L^1(D)\cap W^{1,1}_{\mathrm{loc}}(D),
\]
\rev{given by the Green potential}
\begin{equation}\label{eq:green-potential-solution}
 u(x)=\mathcal G_b\mu(x):=\int_DG_b^D(x,y)\,\differential\mu(y)
 \quad\text{for a.e. }x\in D.
\end{equation}
\rev{More precisely, the following conclusions hold.}
\begin{enumerate}
\item \rev{For every $K\Subset D$,}
\begin{equation}\label{eq:green-measure-operator-bound}
 \|\mathcal G_b\mu\|_{L^1(D)}
 +\|\nabla\mathcal G_b\mu\|_{L^1(K)}
 \le C_K|\mu|(D).
\end{equation}
\rev{If $\mu\ge0$, then $\mathcal G_b\mu\ge0$; hence the operator is
order preserving.}

\item \rev{The function $u=\mathcal G_b\mu$ solves
\eqref{eq:measure-dirichlet-main} distributionally:}
\begin{equation}\label{eq:measure-distributional}
 \int_D u(-\Delta)^s\varphi\,\differential x
 +\int_D b\cdot\nabla u\,\varphi\,\differential x
 =\int_D\varphi\,\differential\mu
\end{equation}
for every $\varphi\in C_c^\infty(D)$.

\item \rev{At every $x\in D$ for which the right-hand side below is
finite, the integral in \eqref{eq:green-potential-solution} is absolutely
convergent and}
\begin{equation}\label{eq:u-potential-bound}
 |u(x)|\le C\I_{2s}^{|\mu|}(x,\operatorname{diam}D).
\end{equation}
\rev{For every $K\Subset D$, the weak gradient has the representative}
\begin{equation}\label{eq:green-potential-gradient}
 V_\mu(x):=\int_D\nabla_xG_b^D(x,y)\,\differential\mu(y),
\end{equation}
\rev{defined at every $x\in K$ for which the potential below is finite,
and}
\begin{equation}\label{eq:gradient-potential-bound}
 |V_\mu(x)|\le C_K\I_{2s-1}^{|\mu|}
 (x,2\operatorname{diam}D).
\end{equation}
\rev{In particular, $V_\mu=\nabla u$ almost everywhere on $K$.}

\item \rev{The extended Green operator is sequentially weak-*--to--strong
continuous on bounded subsets of measures: if
$\mu_j\stackrel{*}{\rightharpoonup}\mu$ in $\M(\overline D)$ and
$\sup_j|\mu_j|(D)<\infty$, and neither $\mu$ nor any $\mu_j$ charges
$\partial D$, then}
\begin{equation}\label{eq:green-measure-stability}
 \mathcal G_b\mu_j\longrightarrow\mathcal G_b\mu
 \quad\text{in }W^{1,1}(K)
 \qquad\text{for every }K\Subset D.
\end{equation}
\rev{Consequently, $u$ is the unique Green-potential SOLA of
\eqref{eq:measure-dirichlet-main}, and every admissible approximation in
\cref{def:sola-measure} converges to $u$ in
$W^{1,1}_{\mathrm{loc}}(D)$.}
\end{enumerate}
\rev{The potential orders $2s$ and $2s-1$ are optimal over the class of
problems covered by the theorem.  The constant in
\eqref{eq:u-potential-bound} depends on $n,s,D$ and the Kato modulus of
$b$; $C_K$ also depends on $K$, $\gamma$, and the local $C^\gamma$ norm of
$b$.}  The constants are for fixed $s$; no uniformity as $s\uparrow1$ is
asserted.
\endgroup
\end{theorem}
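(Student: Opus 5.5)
The plan is to build everything from two pointwise kernel bounds, $0\le G_b^D(x,y)\lesssim |x-y|^{2s-n}$ and $|\nabla_xG_b^D(x,y)|\le C_K|x-y|^{2s-n-1}$ for $x\in K\Subset D$, $y\in D$. The first comes from the Bogdan--Jakubowski comparability with the drift-free Green function of the $C^{1,1}$ domain. The second comes from applying \cref{thm:main} to the section $x\mapsto G_b^D(x,y)$ on a ball $B_{2R}(x)$ with $R\simeq\min(|x-y|,\operatorname{dist}(K,\partial D))/4$. For this we use the viscosity characterization of Green sections, which the paper establishes via \cref{prop:green-framework}, with $f=0$. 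The $L^\infty(\R^n)$ norm of the section is not finite near the pole, so I would first truncate. Write $G=G\chi_{B_{|x-y|/2}(y)}+G\chi_{\text{else}}$ and treat the near-pole piece as a tail contribution to the right-hand side $f$; its $C^\gamma$ norm on $B_{2R}(x)$ is controlled by $\Tail_s$, which is bounded by $\int_{B_{|x-y|/2}(y)}|z-y|^{2s-n}\,dz\,|x-y|^{-n-2s}\sim |x-y|^{-n}$, times $R^{2s}$. Then \eqref{eq:main-estimate} gives $|\nabla_xG|\lesssim R^{-1}|x-y|^{2s-n}$. Since $R\le1$ and $b\in C^\gamma$ on a neighborhood of $K$, the drift size $\mathcal B_R$ is uniformly bounded, so $C$ depends on $K$ only.

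With these bounds, (1) and (3) follow from Fubini and layer-cake. For (1), $\sup_y\int_D G(x,y)\,dx<\infty$ and $\sup_y\int_K|\nabla_xG(x,y)|\,dx<\infty$, because $2s-n-1>-n$ exactly when $s>1/2$. Positivity is inherited from $G\ge0$. The identity $V_\mu=\nabla u$ is obtained by differentiating under the integral against a test function, justified by the same integrability. For (3), write $\int|x-y|^{\beta-n}\,d|\mu|(y)=(n-\beta)\int_0^\infty\rho^{\beta-n-1}|\mu|(B_\rho(x))\,d\rho$. Cut at $\operatorname{diam}D$, respectively $2\operatorname{diam}D$, beyond which $|\mu|(B_\rho(x))=|\mu|(D)$; the tail then integrates to the terminal term of $\I_\beta^{|\mu|}$. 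For (2), first verify it for $\mu=h\,dx$ with $h\in C_c^\infty$, where $\mathcal G_bh$ is the zero-exterior solution from \cref{prop:green-framework}. Then pass to the limit using (4).

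For (4), which I expect to be the main obstacle, set $\mu_j\rightharpoonup\mu$ and split $G=G\,\eta_\epsilon(x-y)+G(1-\eta_\epsilon(x-y))$, with $\eta_\epsilon$ a cutoff of the diagonal at scale $\epsilon$. The near part is uniformly small in $W^{1,1}(K)$, bounded by $C\epsilon^{2s-1}\sup_j|\mu_j|(D)$. For the far part, the maps $y\mapsto G(x,y)$ and $y\mapsto\nabla_xG(x,y)$ are continuous off the diagonal. Near $\partial D$ they must be controlled uniformly: $G(x,y)\to0$ as $y\to\partial D$ by the boundary decay $\delta(y)^s$ from comparability, and $\nabla_xG(x,y)\to0$ by the same truncated Schauder argument, since the section is small in $L^\infty$. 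This lets us test weak-* convergence in $\M(\overline D)$ against continuous functions on $\overline D$ that vanish on $\partial D$, which is the reason for the hypothesis that no measure charges $\partial D$. To get $W^{1,1}(K)$ convergence rather than pointwise convergence, I would use equicontinuity in $x\in K$ of the far kernels, which follows from the $C^{1,\alpha}$ part of \eqref{eq:main-estimate}, and then dominated convergence.

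Uniqueness of the SOLA follows immediately: any admissible $h_j$ gives $\mathcal G_bh_j\to\mathcal G_b\mu$. Sharpness follows from $\mu=\delta_{y_0}$ together with the lower bound $G\gtrsim|x-y|^{2s-n}$ near the diagonal.
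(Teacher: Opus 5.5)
Your architecture is the paper's. You use two kernel bounds: comparability, and the interior estimate on a Green section with the pole moved into the right-hand side. Your pole truncation does the job of the cutoff in \cref{lem:localized-gradient-tail}, and the viscosity property comes from \cref{prop:green-viscosity}. Then Tonelli and layer-cake give (1) and (3), and a near-diagonal cutoff with weak-* testing of the far kernels gives (4).

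The real difference is in (2). The paper obtains \eqref{eq:measure-distributional} directly, by integrating the section identity \eqref{eq:green-adjoint-identity} against $\mu$ and applying Fubini. You instead prove it for $h\in C_c^\infty(D)$, where \cref{prop:green-framework} supplies it, and pass to the limit through stability. This buys something: you use only the operator-level identity and in effect derive the section identity rather than assume it. As written, however, the limit step is incomplete. Conclusion (4) gives convergence only in $W^{1,1}(K)$ for $K\Subset D$, whereas $(-\Delta)^s\varphi$ does not vanish near $\partial D$, so $\int_D u_j(-\Delta)^s\varphi\,dx$ is not controlled by interior convergence. The fix is a uniform boundary-strip bound from \eqref{eq:green-upper} and rearrangement: $\sup_{y\in D}\int_{D\setminus K}G_b^D(x,y)\,dx\le C|D\setminus K|^{2s/n}$. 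With $M=\sup_j|\mu_j|(D)$, this gives $\int_{D\setminus K}(|u_j|+|u|)\,dx\le C(M+|\mu|(D))|D\setminus K|^{2s/n}$. Let $j\to\infty$ and then $K\uparrow D$. The drift term only needs convergence on $\operatorname{supp}\varphi$, so it is fine as you have it.

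A few smaller points:
\begin{itemize}
\item In (4), equicontinuity is unnecessary. Pointwise weak-* convergence for each fixed $x\in K$, together with the uniform bounds $C\epsilon^{2s-n}$ and $C_K\epsilon^{2s-n-1}$ on the far kernels, is enough, and dominated convergence on $K$ finishes. The paper instead gets uniform convergence in $x$ via Stone--Weierstrass.
\item As $y\to\partial D$, the section is small only on $B_{4r}(x)$, not in $L^\infty(\R^n)$, since it still blows up at its pole. The vanishing of $\nabla_xG_b^D(x,y)$ should therefore be read off the localized estimate. There both $\sup_{B_{4r}(x)}G_b^D(\cdot,y)$ and the tail tend to zero; the tail is bounded by $C_K\int_DG_b^D(z,y)\,dz\lesssim\int_DG_0^D(z,y)\,dz\to0$.
\item The lower bound $G_b^D\gtrsim|x-y|^{2s-n}$ gives sharpness of the gradient order only after a radial integration. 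An improved bound $|\nabla_xG_b^D(x,y_0)|\lesssim|x-y_0|^{2s-n-1+\delta}$ would integrate to contradict that lower bound. The paper reads both orders off the explicit ball kernel $\kappa_{n,s}|x|^{2s-n}-H(x)$.
\end{itemize}
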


\begin{remark}[scope of the measure-data theorem]
\label{rem:measure-scope}
\rev{No separate compatibility assumption on the Green sections is made in
\cref{thm:measure-potential}.  Their probabilistic harmonicity, supplied by
the Green-kernel construction, is converted in
\cref{prop:green-viscosity} into the pasted-test viscosity property needed
for the interior estimate.  Continuity off the diagonal and Green-function
comparison follow from \cite[Lemma~7 and Theorem~1]{BogdanJakubowski2012}.
The existence, continuity, and pointwise bound of $\nabla_xG_b^D$ are then
conclusions of \cref{lem:green-gradient}.  Conclusion~(4) is a stability
property of the Green operator, not an additional condition on the
approximating sequence.}
\end{remark}

\begin{theorem}[consistency as $s\uparrow1$]
\label{thm:s-to-one}
With the normalization in \eqref{eq:fundamental-normalized}, the following
statements hold:
\begin{enumerate}
\item If $n\ge3$, then locally uniformly on $\R^n\setminus\{0\}$,
\begin{equation}\label{eq:newton-limit}
 \Phi_{n,s}(x)\longrightarrow
 \frac{1}{(n-2)|S^{n-1}|}|x|^{2-n}.
\end{equation}
For compactly supported $\mu$, the associated potentials and gradients
therefore converge locally uniformly outside $\operatorname{supp}\mu$.
At a point $x$ in the support, the same conclusions hold if, for some
$\delta>0$,
\begin{equation}\label{eq:s-limit-dominating-potentials}
 \I_{2-\delta}^{|\mu|}(x)<\infty,
 \qquad \I_{1-\delta}^{|\mu|}(x)<\infty,
\end{equation}
respectively.
\item If $n=2$, then locally uniformly on $\R^2\setminus\{0\}$,
\begin{equation}\label{eq:log-limit}
 \Phi_{2,s}(x)-\frac{1}{4\pi(1-s)}
 \longrightarrow -\frac{1}{2\pi}\log|x|+c_0,
\end{equation}
while
\begin{equation}\label{eq:gradient-log-limit}
 \nabla\Phi_{2,s}(x)\longrightarrow
 -\frac{1}{2\pi}\frac{x}{|x|^2}.
\end{equation}
Thus the gradient estimate converges to an $\I_1$ estimate, whereas the
solution requires a logarithmic potential after fixing an additive
normalization.
\item The weak gradient exponent satisfies
\begin{equation}\label{eq:weak-exponent-limit}
 \frac{n}{n-2s+1}\longrightarrow\frac{n}{n-1}.
\end{equation}
\end{enumerate}
These statements concern normalized kernels, potential orders, and mapping
exponents; they do not assert uniform boundedness of the constants in the
estimates as $s\uparrow1$.
\end{theorem}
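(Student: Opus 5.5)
The proof is a direct computation with the explicit constant $\kappa_{n,s}=\Gamma(\frac{n-2s}{2})/(2^{2s}\pi^{n/2}\Gamma(s))$, followed by dominated convergence for the potentials.

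\emph{Case $n\ge3$.} As $s\uparrow1$ we have $\kappa_{n,s}\to\Gamma(\frac{n-2}{2})/(4\pi^{n/2})$. Using $|S^{n-1}|=2\pi^{n/2}/\Gamma(n/2)$ and $\Gamma(n/2)=\frac{n-2}{2}\Gamma(\frac{n-2}{2})$, this limit equals $1/((n-2)|S^{n-1}|)$. The factor $|x|^{2s-n}$ converges to $|x|^{2-n}$ uniformly on compact subsets of $\R^n\setminus\{0\}$, since $\log|x|$ is bounded there. The same holds for $\nabla\Phi_{n,s}(x)=(2s-n)\kappa_{n,s}|x|^{2s-n-2}x$. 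This gives \eqref{eq:newton-limit} together with the gradient analogue.

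For compactly supported $\mu$ and $x$ in a compact set $K$ disjoint from $\operatorname{supp}\mu$, the quantity $|x-y|$ stays in a compact interval away from $0$. The kernels therefore converge uniformly in $(x,y)$, and integrating against the finite measure $|\mu|$ gives locally uniform convergence of $u$ and $\nabla u$.

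At a point $x$ in the support, restrict to $s\in[1-\delta/2,1)$. On $\operatorname{supp}\mu$ we have $|x-y|\le M$, which gives the dominations
\[
|x-y|^{2s-n}\le C_M|x-y|^{2-\delta-n},\qquad |x-y|^{2s-n-1}\le C_M|x-y|^{1-\delta-n}.
\]
The layer-cake formula $\int|x-y|^{\beta-n}\,\differential|\mu|(y)=(n-\beta)\,\I_\beta^{|\mu|}(x)$ (up to the finite tail at $\rho\ge M$) shows that \eqref{eq:s-limit-dominating-potentials} makes these dominating functions $|\mu|$-integrable. Pointwise convergence of the integrands holds for $y\ne x$. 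The atom at $y=x$ is excluded, because finiteness of $\I_{2-\delta}^{|\mu|}(x)$ forces $|\mu|(\{x\})=0$. Dominated convergence then completes the argument.

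\emph{Case $n=2$.} Here the main difficulty is the expansion of $\kappa_{2,s}=\Gamma(1-s)/(4^s\pi\Gamma(s))$ near $s=1$. Put $\epsilon=1-s$ and use $\Gamma(\epsilon)=\epsilon^{-1}-\gamma_E+O(\epsilon)$. The remaining factor $1/(4^s\pi\Gamma(s))$ equals $\frac{1}{4\pi}\bigl(1+\epsilon(\log4-\gamma_E)+O(\epsilon^2)\bigr)$, because $\Gamma'(1)=-\gamma_E$. Also $|x|^{2s-2}=1-2\epsilon\log|x|+O(\epsilon^2)$ uniformly on compacts, since the Taylor remainder of $e^{-2\epsilon\log|x|}$ is controlled there. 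Multiplying these out, the $\epsilon^{-1}$ term is exactly $1/(4\pi(1-s))$. The $O(1)$ terms are
\[
-\tfrac{1}{2\pi}\log|x|+\tfrac{1}{4\pi}(\log4-2\gamma_E),
\]
so $c_0=(\log 4-2\gamma_E)/(4\pi)$. All error terms are $O(\epsilon)$ uniformly on compacts, which proves \eqref{eq:log-limit}.

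For the gradient, $\nabla\Phi_{2,s}=(2s-2)\kappa_{2,s}|x|^{2s-4}x$ and $(2s-2)\kappa_{2,s}=-2\epsilon\kappa_{2,s}\to-\frac{1}{2\pi}$, which gives \eqref{eq:gradient-log-limit}. The potential statements follow by the same dominated-convergence argument as in the case $n\ge3$, with $\I_{1-\delta}$ controlling the gradient kernel.

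\emph{Exponent limit.} Statement \eqref{eq:weak-exponent-limit} is immediate from continuity of $s\mapsto n/(n-2s+1)$ at $s=1$.
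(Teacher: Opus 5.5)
Your proposal is correct and follows essentially the same route as the paper: continuity of $\Gamma$ and locally uniform convergence of $|x|^{2s-n}$ for $n\ge3$, dominated convergence with $s$-independent majorants at points of $\operatorname{supp}\mu$, and the Laurent expansion of $\Gamma(1-s)$ for $n=2$. Your explicit value $c_0=(\log4-2\gamma_E)/(4\pi)$ and your exclusion of an atom at $y=x$ are minor refinements that the paper leaves implicit; the one point worth stating explicitly is that $\kappa_{n,s}$ and $(2s-n)\kappa_{n,s}$ stay bounded for $s\in[1-\delta/2,1)$, which is what makes your majorants independent of $s$.
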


\begin{theorem}[boundary regularity with interior drift]\label{thm:boundary-drift}
Let $s\in[1/2,1)$, $\gamma\in(0,1)$, and let $\Omega\subset\R^n$ be a
bounded $C^{1,1}$ domain.  Suppose that
\[
 b,f\in C^\gamma(\overline\Omega),
 \qquad K:=\operatorname{supp}_{\Omega}b\Subset\Omega.
\]
Let $u\in L^\infty(\R^n)\cap C(\Omega)$ be a viscosity solution of
the problem
\begin{equation}\label{eq:dirichlet-drift}
 \begin{cases}
 (-\Delta)^su+b\cdot\nabla u=f&\text{in }\Omega,\\
 u=0&\text{in }\R^n\setminus\Omega.
 \end{cases}
\end{equation}
Let $d(x)=\operatorname{dist}(x,\R^n\setminus\Omega)$ in $\Omega$, extended
by zero to $\R^n\setminus\Omega$.  Then, for every
$\varepsilon\in(0,s)$,
\begin{equation}\label{eq:boundary-drift-estimate}
 u\in C^s(\R^n),
 \qquad \frac{u}{d^s}\in
 C^{s-\varepsilon}(\overline\Omega),
\end{equation}
and
\begin{align}\label{eq:boundary-drift-norm}
 \|u\|_{C^s(\R^n)}
 +\left\|\frac{u}{d^s}\right\|_{C^{s-\varepsilon}(\overline\Omega)}\le C\bigl(
 \|u\|_{L^\infty(\R^n)}+
 \|f\|_{C^\gamma(\Omega)}\bigr).
\end{align}
Here $C$ depends on $n,s,\gamma,\varepsilon,\Omega$,
$\operatorname{dist}(K,\partial\Omega)$ and
$\|b\|_{C^\gamma(\Omega)}$.
\end{theorem}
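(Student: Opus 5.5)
The plan is to split $u$ into a part governed by the drift-free boundary theory of Ros-Oton and Serra and a correction that is harmless near $\partial\Omega$, because $b$ vanishes there.

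\textbf{Step 1: interior gradient control on the drift support.} Set $\delta:=\operatorname{dist}(K,\partial\Omega)$ and $K':=\{x:\operatorname{dist}(x,K)<\delta/2\}\Subset\Omega$. Cover $\overline{K'}$ by finitely many balls $B_{R/2}(x_i)$ with $R=\min\{1,\delta/8\}$, chosen so that $B_{2R}(x_i)\Subset\Omega$. Apply \cref{thm:main} on each ball; the drift size $\mathcal B_R$ is bounded by $\|b\|_{C^\gamma(\Omega)}$ and $\delta$. This gives
\[
\|\nabla u\|_{C^\alpha(\overline{K'})}\le C\bigl(\|u\|_{L^\infty(\R^n)}+\|f\|_{C^\gamma(\Omega)}\bigr)
\]
for some $\alpha\in(0,\gamma)$. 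In particular $g:=f-b\cdot\nabla u$ lies in $C^{\min\{\alpha,\gamma\}}$ on $K'$. On $\Omega\setminus K$ we have $b\equiv0$, so $g=f$ there. Hence $g\in L^\infty(\Omega)$ with the bound above, and the equation reduces to
\[
(-\Delta)^s u=g \quad\text{in }\Omega,\qquad u=0 \text{ in }\R^n\setminus\Omega.
\]
This equation holds in the viscosity sense: near $K$ the function $u$ is $C^{1,\alpha}$, so test functions can be transferred, and away from $K$ the equation is literally drift-free.

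\textbf{Step 2: drift-free boundary regularity.} Apply the Ros-Oton--Serra boundary regularity theorem for the fractional Dirichlet problem in $C^{1,1}$ domains with bounded right-hand side. It yields
\[
\|u\|_{C^s(\R^n)}+\|u/d^s\|_{C^{\beta}(\overline\Omega)}\le C\|g\|_{L^\infty}
\]
for some small $\beta>0$. This already proves the first part of \eqref{eq:boundary-drift-estimate} and the $C^s$ part of \eqref{eq:boundary-drift-norm}.

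\textbf{Step 3: improving the exponent to $s-\varepsilon$.} Only a small $\beta$ is available from $L^\infty$ data, so the exponent has to be upgraded. Near $\partial\Omega$, that is on $\Omega\setminus K'$, we have $g=f\in C^\gamma$. The higher boundary regularity results for $C^{1,1}$ domains (Ros-Oton--Serra, and later work on $u/d^s$) give $u/d^s\in C^{s-\varepsilon}$ up to the boundary. Those results require $g$ only to be bounded globally, with additional Hölder regularity near the boundary, so I would localize. Take a cutoff $\eta$ equal to $1$ near $\partial\Omega$ and vanishing on $K'$. Write $u=v+w$, where
\[
(-\Delta)^s v=\eta g=\eta f\in C^\gamma(\overline\Omega),\qquad v=0 \text{ outside }\Omega.
\]
For $v$, the known estimate gives $v/d^s\in C^{s-\varepsilon}(\overline\Omega)$; the loss of $\varepsilon$ reflects the fact that $d$ is only $C^{1,1}$ near the boundary. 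The remainder $w$ solves $(-\Delta)^s w=(1-\eta)g$, whose support lies in $K''\Subset\Omega$. Near $\partial\Omega$, $w$ is therefore $s$-harmonic with zero exterior data, and boundary Harnack / $C^{1,1}$-domain regularity for $s$-harmonic functions gives $w/d^s\in C^{s-\varepsilon}$ near $\partial\Omega$, controlled by $\|w\|_{L^\infty}\lesssim\|g\|_{L^\infty}$. In the interior, both $v$ and $w$ are smooth enough to be handled by interior estimates, and $d^{-s}$ is smooth there. Summing the pieces yields \eqref{eq:boundary-drift-norm}.

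The main obstacle is Step 3: one must make sure the cited $C^{s-\varepsilon}$ boundary estimate for $u/d^s$ applies to merely $C^{1,1}$ domains with bounded (not Hölder) data in the interior. If a direct reference is unavailable, I would first try proving the $w$-part by the standard blow-up/Liouville argument at boundary points, using that $w$ is $s$-harmonic in a boundary strip.
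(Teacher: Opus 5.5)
Your Steps 1 and 2 are the paper's proof. The paper covers a neighborhood of $K$ by balls on which \cref{thm:main} applies and sets $g=f-b\cdot\nabla u\in L^\infty(\Omega)$. It then transfers the viscosity inequalities using $\nabla\phi(x)=\nabla u(x)$ at contact points, and applies the drift-free estimate.

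Step 3 is unnecessary, and the obstacle you flag does not arise. The result the paper invokes, \cref{prop:boundary-base} (Theorem~1.2 of \cite{RosOtonSerra2016}), already gives $\|u/d^s\|_{C^{s-\varepsilon}(\overline\Omega)}\le C\|g\|_{L^\infty(\Omega)}$ for merely bounded $g$ in $C^{1,1}$ domains. So the proof ends after Step 2. The small exponent $\alpha<\min\{s,1-s\}$ you have in mind comes from Ros-Oton and Serra's earlier paper on the Dirichlet problem for the fractional Laplacian.
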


The first theorem is proved in \cref{sec:proof}; the whole-space and
measure-data estimates, together with the $s\uparrow1$ limit, are proved in
\cref{sec:measure}; and the last theorem is proved in \cref{sec:boundary}.

\rev{The interior estimate does not use a Green-function representation.
Green-kernel hypotheses enter only in \cref{sec:measure}, where the interior
estimate is applied away from the pole.}

\section{Viscosity solutions, notation, and scaling}
\label{sec:preliminaries}

\rev{We begin by fixing the normalization and the viscosity and energy
formulations, and then record the scaling and regularization used in the
interior argument.  The same pasted-test framework will later be used for
the Green sections; the energy formulation is needed only for
\cref{cor:weak-viscosity-equivalence}.}

We normalize the fractional Laplacian by
\begin{equation}\label{eq:frac-lap}
 (-\Delta)^s u(x)=c_{n,s}\,\operatorname{P.V.}
 \int_{\R^n}\frac{u(x)-u(y)}{|x-y|^{n+2s}}\,\differential y.
\end{equation}
For estimates at a fixed value of $s$, the precise value of the positive
constant $c_{n,s}$ is immaterial.  Whenever the limit $s\uparrow1$ is
discussed, however, we use the standard normalization for which the Fourier
symbol of $(-\Delta)^s$ is $|\xi|^{2s}$.  With this normalization,
$(-\Delta)^s\varphi\to-\Delta\varphi$ for every Schwartz function
$\varphi$.

For later reference, define
\[
 L^1_{2s}(\R^n)=\left\{v\in L^1_{\mathrm{loc}}(\R^n):
 \int_{\R^n}\frac{|v(y)|}{1+|y|^{n+2s}}\,\differential y<\infty\right\}.
\]
The assumption $u\in L^\infty(\R^n)$ implies $u\in L^1_{2s}(\R^n)$.

If $u\in L^1_{2s}(\R^n)$, $\phi\in C^2(B_\rho(x_0))$, and
$0<r<\rho$, define the function obtained by pasting the test function to
the original solution by
\begin{equation}\label{eq:pasted-test}
 \phi^{\,r,u}_{x_0}(x):=
 \begin{cases}
  \phi(x),&x\in B_r(x_0),\\
  u(x),&x\in\R^n\setminus B_r(x_0).
 \end{cases}
\end{equation}
Its fractional Laplacian at the contact point is evaluated directly from
the original principal-value definition:
\begin{equation}\label{eq:pasted-fractional-laplacian}
 (-\Delta)^s\phi^{\,r,u}_{x_0}(x_0)
 :=c_{n,s}\operatorname{P.V.}\int_{\R^n}
 \frac{\phi^{\,r,u}_{x_0}(x_0)-\phi^{\,r,u}_{x_0}(y)}
 {|x_0-y|^{n+2s}}\,\differential y.
\end{equation}
The principal value is finite: in $B_r(x_0)$ the first-order Taylor term
of the $C^2$ function $\phi$ has zero principal value against the symmetric
kernel, and the remainder is $O(|y-x_0|^2)$; outside the ball, finiteness
follows from $u\in L^1_{2s}(\R^n)$.

\begin{definition}[viscosity solutions]\label{def:viscosity}
A function $u:\R^n\to\R$ belonging to $L^1_{2s}(\R^n)$ and upper
semicontinuous in $\Omega$ is a viscosity
subsolution of \eqref{eq:equation} if the following holds.  Whenever
$x_0\in\Omega$, $B_\rho(x_0)\Subset\Omega$, and
$\phi\in C^2(B_\rho(x_0))$ satisfy
\[
 u(x_0)=\phi(x_0),\qquad
 u-\phi\le0\quad\text{in }B_\rho(x_0),
\]
one has, for every $0<r<\rho$,
\begin{equation}\label{eq:viscosity-sub}
 (-\Delta)^s\phi^{\,r,u}_{x_0}(x_0)
 +b(x_0)\cdot\nabla\phi(x_0)\le f(x_0).
\end{equation}
A function in $L^1_{2s}(\R^n)$ that is lower semicontinuous in $\Omega$ is
a viscosity supersolution if, whenever a $C^2$ function touches it from
below as above, the reverse inequality holds in
\eqref{eq:viscosity-sub}.  A function in $L^1_{2s}(\R^n)$ that is continuous
in $\Omega$ is a viscosity solution if it is both a viscosity subsolution
and a viscosity supersolution.
\end{definition}

Thus the test function controls the singular part of the original integral,
whereas the solution itself is retained in the nonlocal tail.  This is the
standard pasted-test formulation used for nonlocal viscosity solutions and
is equivalent to the global-test formulation in
\cite[Section~3]{Silvestre2012Diff}.  No second-difference notation is part
of the definition.

For comparison with the variational formulation, we also record the local
energy notion.  It will be used only in the equivalence result below; the
main Schauder theorem itself remains a viscosity statement.

\begin{definition}[local energy weak solution]\label{def:weak}
Let $U\Subset\Omega$ and assume that $\divergence b=0$ in $U$ in the
distributional sense.  A function
$u\in H^{s}_{\mathrm{loc}}(U)\cap L^1_{2s}(\R^n)$ is a local energy weak
solution of \eqref{eq:equation} in $U$ if
\begin{align}\label{eq:weak-form}
 &\frac{c_{n,s}}2\iint_{\R^n\times\R^n}
 \frac{(u(x)-u(y))(\varphi(x)-\varphi(y))}{|x-y|^{n+2s}}
 \,\differential x\differential y\notag\\
 &\qquad-\int_U u\,b\cdot\nabla\varphi\,\differential x
 =\int_U f\varphi\,\differential x
\end{align}
for every $\varphi\in C_c^\infty(U)$.
\end{definition}

Throughout, if $D\subset\R^n$ is open, $H_0^s(D)$ denotes the closure of
$C_c^\infty(D)$ in $H^s(\R^n)$; its elements are identified with their zero
extensions outside $D$.  This convention is used also at $s=1/2$.

\begin{lemma}[the drift form on the energy space]
\label{lem:drift-form}
Let $s\in[1/2,1)$, let $D$ be a ball, and assume
$b\in W^{1,\infty}(D;\R^n)$ and $\divergence b=0$.  The form initially
defined on smooth compactly supported functions by
\[
 \mathfrak d_b(v,\varphi):=-\int_D v\,b\cdot\nabla\varphi
\]
extends continuously to $H_0^s(D)\times H_0^s(D)$.  Moreover,
\begin{equation}\label{eq:drift-cancellation}
 \mathfrak d_b(v,v)=0\qquad\text{for every }v\in H_0^s(D).
\end{equation}
\end{lemma}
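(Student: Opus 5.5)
The plan is to prove a bilinear bound $|\mathfrak d_b(v,\varphi)|\le C\|v\|_{H^s}\|\varphi\|_{H^s}$ on $C_c^\infty(D)\times C_c^\infty(D)$, extend by density, and then obtain the cancellation \eqref{eq:drift-cancellation} from the identity for smooth functions. The difficulty is that $\nabla\varphi$ carries one full derivative, while $H^s$ controls only $s\ge1/2$ of them. I will therefore split the derivative evenly by duality between $H^{s}$ and $H^{-s}$, or more conveniently between $H^{1/2}$ and $H^{-1/2}$. Since $s\ge1/2$, we have $\|\cdot\|_{H^{1/2}}\le\|\cdot\|_{H^s}$ on $H^s(\R^n)$, so it suffices to treat $s=1/2$.

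\textbf{Step 1 (bilinear bound).} For $v,\varphi\in C_c^\infty(D)$, write $\mathfrak d_b(v,\varphi)=-\int_{\R^n}(\tilde b v)\cdot\nabla\varphi$. Here $\tilde b$ is a $W^{1,\infty}(\R^n)$ extension of $b$ from the ball; a Lipschitz extension exists since $D$ is a ball, and the integrand vanishes outside $D$ anyway. Each component $\partial_i$ is bounded from $H^{1/2}(\R^n)$ to $H^{-1/2}(\R^n)$, since its symbol is $i\xi_i$ and $|\xi_i|\le(1+|\xi|^2)^{1/2}$. Hence
\[
 |\mathfrak d_b(v,\varphi)|\le \sum_i\|\tilde b_iv\|_{H^{1/2}}\|\partial_i\varphi\|_{H^{-1/2}}
 \le \sum_i\|\tilde b_iv\|_{H^{1/2}}\|\varphi\|_{H^{1/2}}.
\]
It remains to show that multiplication by a $W^{1,\infty}$ function is bounded on $H^{1/2}$, and this is the main point of the proof. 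I would argue by interpolation. Multiplication by $\tilde b_i$ is bounded on $L^2$ with norm $\|b\|_{L^\infty}$, and on $H^1$ with norm $\lesssim\|b\|_{W^{1,\infty}}$ by the product rule. Complex interpolation then gives boundedness on $H^{1/2}=[L^2,H^1]_{1/2}$. Alternatively, one can argue directly with the Gagliardo seminorm:
\[
 |(\tilde bv)(x)-(\tilde bv)(y)|\le\|b\|_\infty|v(x)-v(y)|+\|\nabla b\|_\infty|x-y||v(y)|,
\]
and the second term contributes $\int|v(y)|^2\int_{|x-y|<1}|x-y|^{2-n-1}\,dx\,dy\lesssim\|v\|_{L^2}^2$. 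For $|x-y|\ge1$, the bound $2\|b\|_\infty(|v(x)|+|v(y)|)$ suffices. This gives $|\mathfrak d_b(v,\varphi)|\le C\|b\|_{W^{1,\infty}(D)}\|v\|_{H^s}\|\varphi\|_{H^s}$.

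\textbf{Step 2 (extension).} By the definition of $H_0^s(D)$ as a closure of $C_c^\infty(D)$, the bound from Step 1 makes the form uniformly continuous. It therefore extends uniquely to a continuous bilinear form on $H_0^s(D)\times H_0^s(D)$.

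\textbf{Step 3 (cancellation).} For $v\in C_c^\infty(D)$ we compute
\[
 \mathfrak d_b(v,v)=-\int_D b\cdot\tfrac12\nabla(v^2)=\tfrac12\int_D(\divergence b)\,v^2=0.
\]
The integration by parts is legitimate because $v^2\in C_c^\infty(D)$ and $\divergence b=0$ distributionally, tested against $v^2$; for Lipschitz $b$ this holds a.e. The map $v\mapsto\mathfrak d_b(v,v)$ is continuous on $H_0^s(D)$ by Step 2, so the identity passes to the closure. This proves \eqref{eq:drift-cancellation}.
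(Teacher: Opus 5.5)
Your proposal is correct and follows essentially the paper's route: a duality pairing that moves the derivative into a negative-order Sobolev space, the $W^{1,\infty}$ multiplier bound, density of $C_c^\infty(D)$ in $H_0^s(D)$, and integration by parts with $\divergence b=0$ for the cancellation. The only differences are minor. You pair at the $H^{1/2}$ level via $H^s\hookrightarrow H^{1/2}$, whereas the paper uses $H^{s-1}\hookrightarrow H^{-s}$. You also justify the multiplier bound (via the Gagliardo estimate) and the extension of $b$ off the ball, both of which the paper takes for granted; for that extension, truncate the Lipschitz extension so that it stays bounded.
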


\begin{proof}
Multiplication by a $W^{1,\infty}$ function is bounded on $H^s$.  Also
$\nabla:H^s\to H^{s-1}$ is bounded, and $H^{s-1}\hookrightarrow H^{-s}$
because $s\ge1/2$.  Consequently,
\[
 |\mathfrak d_b(v,\varphi)|
 =|\langle v,b\cdot\nabla\varphi\rangle_{H^s,H^{-s}}|
 \le C\|b\|_{W^{1,\infty}(D)}
       \|v\|_{H^s(\R^n)}\|\varphi\|_{H^s(\R^n)}.
\]
This proves the extension.  If $v\in C_c^\infty(D)$, integration by parts
and $\divergence b=0$ give
\[
 \mathfrak d_b(v,v)=-\frac12\int_D b\cdot\nabla(v^2)
 =\frac12\int_D(\divergence b)v^2=0.
\]
For a general $v\in H_0^s(D)$ choose $v_k\in C_c^\infty(D)$ with
$v_k\to v$ in $H^s$.  Continuity of the extended form and the preceding
identity imply
$\mathfrak d_b(v,v)=\lim_k\mathfrak d_b(v_k,v_k)=0$.
\end{proof}

\begin{definition}[regularized local approximation]
\label{def:approximable-viscosity}
Let $B_{2R}(x_0)\Subset\Omega$.  A sequence $(u_j,b_j,f_j)$ is called a
\emph{regularized local approximation} of a bounded solution $u$ in
$B_{2R}(x_0)$ if,
for every $R'<2R$, there are
\[
 b_j,f_j\in C^\infty(B_{R'}(x_0)),\qquad
 u_j\in C^\infty(B_{R'}(x_0))\cap L^\infty(\R^n),
\]
such that
\begin{enumerate}
\item $(-\Delta)^su_j+b_j\cdot\nabla u_j=f_j$ pointwise in
$B_{R'}(x_0)$;
\item $b_j\to b$ and $f_j\to f$ in $C^\alpha$ on compact subsets for
all $\alpha<\gamma$;
\item $u_j\to u$ locally uniformly in $B_{R'}(x_0)$ and
$\sup_j\|u_j\|_{L^\infty(\R^n)}<\infty$;
\item on each compactly contained ball, the $C^\gamma$ norms of $b_j$ and
$f_j$ are bounded by a fixed multiple of the corresponding norms of $b$
and $f$.
\end{enumerate}
The four conditions isolate the compactness needed to pass the classical
a priori estimate to a nonsmooth viscosity solution.  The next lemma shows
that every bounded viscosity solution considered here admits such an
approximation.
\end{definition}

\rev{The proof uses two localization tools: exact scaling and
viscosity-stable regularization.}

The following scaling calculation will be used repeatedly.

\begin{lemma}[scaling]\label{lem:scaling}
Let $u$ solve \eqref{eq:equation} in $B_{2R}(x_0)$ and set
\begin{equation}\label{eq:rescaling}
 U(x)=u(x_0+Rx),\quad
 B(x)=R^{2s-1}b(x_0+Rx),\quad
 F(x)=R^{2s}f(x_0+Rx).
\end{equation}
Then
\[
 (-\Delta)^sU+B\cdot\nabla U=F\qquad\text{in }B_2.
\]
Furthermore,
\begin{align*}
 \|B\|_{L^\infty(B_2)}
 &=R^{2s-1}\|b\|_{L^\infty(B_{2R}(x_0))},\\
 [B]_{C^\gamma(B_2)}
 &=R^{2s+\gamma-1}[b]_{C^\gamma(B_{2R}(x_0))},\\
 \|F\|_{L^\infty(B_2)}
 &=R^{2s}\|f\|_{L^\infty(B_{2R}(x_0))},\\
 [F]_{C^\gamma(B_2)}
 &=R^{2s+\gamma}[f]_{C^\gamma(B_{2R}(x_0))}.
\end{align*}
\end{lemma}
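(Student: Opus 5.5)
The plan is a direct change of variables, checked at the level of the viscosity definition (\cref{def:viscosity}) rather than only for smooth functions. Write $\Psi(x)=x_0+Rx$; it maps $B_2$ onto $B_{2R}(x_0)$. We proceed in three steps.

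\emph{Step 1: the integral identity.} For any $v\in L^1_{2s}(\R^n)$ that is $C^{1,1}$ near a point $z=\Psi(x)$, set $V=v\circ\Psi$. The substitution $y=x_0+Rw$ in the principal-value integral \eqref{eq:frac-lap} gives
\[
(-\Delta)^sV(x)=R^{2s}\bigl((-\Delta)^sv\bigr)(\Psi(x)).
\]
Indeed, $|\Psi(x)-\Psi(w)|^{n+2s}=R^{n+2s}|x-w|^{n+2s}$ and $\differential y=R^n\differential w$. The symmetric truncations $\R^n\setminus B_\varepsilon(x)$ correspond to $\R^n\setminus B_{R\varepsilon}(z)$, so the principal values match. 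The same identity applies verbatim to pasted functions \eqref{eq:pasted-test}, because
\[
(\phi^{\,r,u}_{z})\circ\Psi=(\phi\circ\Psi)^{\,r/R,\,U}_{x}.
\]
The definition \eqref{eq:pasted-fractional-laplacian} is a principal value of exactly this form, so it rescales the same way. By the chain rule, $\nabla(\phi\circ\Psi)(x)=R\,\nabla\phi(\Psi(x))$.

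\emph{Step 2: the viscosity inequalities.} Let $\tilde\phi\in C^2(B_\rho(x))$ touch $U$ from above at $x\in B_2$. Then $\phi:=\tilde\phi\circ\Psi^{-1}$ is $C^2$ on $B_{R\rho}(\Psi(x))$ and touches $u$ from above at $\Psi(x)$. For every radius $R r$, the subsolution inequality \eqref{eq:viscosity-sub} for $u$ holds at $\Psi(x)$. Multiplying it by $R^{2s}$ and using Step 1, we obtain
\[
(-\Delta)^s\tilde\phi^{\,r,U}_{x}(x)+R^{2s-1}b(\Psi(x))\cdot R\,\nabla\phi(\Psi(x))\le R^{2s}f(\Psi(x)).
\]
The left-hand drift term equals $B(x)\cdot\nabla\tilde\phi(x)$, and the right-hand side is $F(x)$. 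The supersolution case is symmetric. Semicontinuity, continuity and membership in $L^1_{2s}$ are preserved under composition with an affine map. Hence $U$ is a viscosity solution in $B_2$. For classical or pointwise solutions the same computation applies directly.

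\emph{Step 3: the norm identities.} The sup norms are unchanged up to the displayed prefactors, since $\Psi$ is a bijection $B_2\to B_{2R}(x_0)$. For the seminorms we compute
\[
\frac{|B(x)-B(w)|}{|x-w|^\gamma}=R^{2s-1}R^{\gamma}\,\frac{|b(\Psi x)-b(\Psi w)|}{|\Psi x-\Psi w|^\gamma},
\]
and take the supremum; the same computation with $R^{2s}$ in place of $R^{2s-1}$ handles $F$.

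The only point needing care is Step 1 for pasted functions. There one must confirm that the principal-value truncations and the pasting radius transform consistently, so that "every $0<r<\rho$" corresponds exactly to "every $0<Rr<R\rho$". We expect this to be the main obstacle, though it is routine.
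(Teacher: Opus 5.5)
Your proposal is correct and is the direct change of variables $y=x_0+Rw$ that the paper intends; the paper states the lemma as a routine scaling calculation without writing out a proof. Your extra check in the pasted-test framework is the right way to make it rigorous for viscosity solutions: $(\phi^{\,Rr,u}_{\Psi(x)})\circ\Psi=(\phi\circ\Psi)^{\,r,U}_{x}$, principal-value truncations of radius $\varepsilon$ correspond to radius $R\varepsilon$, and the chain-rule factor $R$ in the drift term gives $B\cdot\nabla\tilde\phi$.
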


\rev{The next lemma preserves the global tail bound and the local
H\"older norms under regularization.}

\begin{lemma}[canonical local approximation]
\label{lem:canonical-approximation}
Under the assumptions of \cref{thm:main}, every bounded viscosity solution
admits a regularized local approximation in the sense of
\cref{def:approximable-viscosity}.
\end{lemma}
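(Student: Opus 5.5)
The approach is to build the approximation by mollifying the data and solving a family of regularized Dirichlet problems with exterior datum a mollification of $u$. Uniqueness and stability of viscosity solutions then identify the limit with $u$. Fix $R'<2R$ and choose $R'<R''<2R$. Extend $b,f$ from $B_{R''}(x_0)$ to $C^\gamma(\R^n)$ functions with comparable norms, for instance by a Whitney-type or reflection extension followed by a cutoff. Let $b_j=b*\eta_{1/j}$ and $f_j=f*\eta_{1/j}$. These are smooth, converge in $C^\alpha$ on compacts for every $\alpha<\gamma$ by the standard interpolation between uniform convergence and a uniform $C^\gamma$ bound, and have $C^\gamma$ norms bounded by a fixed multiple of those of $b,f$. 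This gives conditions (2) and (4).

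Next, let $g_j$ be a smooth bounded function on $\R^n$ with $\|g_j\|_{L^\infty}\le\|u\|_{L^\infty}$ and $g_j\to u$ locally uniformly on $\overline{B_{R''}(x_0)}$. Away from the ball we only ask for a.e. convergence, which suffices for convergence of tails by dominated convergence; we can take $g_j=u*\eta_{1/j}$, using continuity of $u$ in $\Omega$. Define $u_j$ as the solution of
\[
(-\Delta)^s u_j+b_j\cdot\nabla u_j=f_j \text{ in } B_{R''}(x_0),\qquad u_j=g_j \text{ outside}.
\]
Existence and uniqueness of a continuous viscosity solution come from Perron's method plus the comparison principle for this operator with smooth drift. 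Barriers are built from $(R''^2-|x-x_0|^2)_+^s$, combined with the fact that $b_j$ is Lipschitz, so the drift term is controlled. The maximum principle gives $\|u_j\|_{L^\infty}\le\|u\|_{L^\infty}+C\|f_j\|_{L^\infty}$, which is uniform in $j$. Interior smoothness of $u_j$ in $B_{R'}(x_0)$ follows from bootstrapping. For $s>1/2$, the drift-free Schauder theory of Ros-Oton–Serra together with the lower order of $b_j\cdot\nabla u_j$ gives $C^{2s+\alpha}$ regularity, and iterating with smooth data gives $C^\infty$. For $s=1/2$, Silvestre's theorem gives $C^{1,\alpha}$ first. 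Then $b_j\cdot\nabla u_j\in C^\alpha$, and the same iteration applies. Once $u_j$ is $C^{2}$ near each point, the equation holds pointwise, which is condition (1).

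The main obstacle is condition (3), the convergence $u_j\to u$. I would use the half-relaxed limits $\overline u=\limsup^* u_j$ and $\underline u=\liminf_* u_j$. By the standard stability of pasted-test viscosity solutions, $\overline u$ is a subsolution and $\underline u$ a supersolution of \eqref{eq:equation} in $B_{R''}(x_0)$ with exterior datum $u$. This step uses the local uniform convergence of $b_j,f_j$ and dominated convergence in the tail integrals of \eqref{eq:pasted-fractional-laplacian}.

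To compare with boundary values, I need equicontinuity up to $\partial B_{R''}$. This comes from barriers: since $g_j\to u$ uniformly near the sphere and $u$ is continuous there, boundary barriers show that $\overline u=\underline u=u$ on $\partial B_{R''}(x_0)$ in the generalized sense. The comparison principle for \eqref{eq:equation} with $C^\gamma$ drift then gives $\overline u\le u\le\underline u$, since $u$ itself solves the same Dirichlet problem.

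This comparison step is the delicate point when $b$ is only Hölder, where uniqueness of viscosity solutions is not automatic. What I would try first is to apply comparison at level $j$ instead. For that, use that $u_j$ is smooth with a uniform $C^{1,\alpha}$ bound on compacts, from a priori estimates applied to smooth solutions. A classical solution can be compared with the viscosity solution $u$ by doubling variables, because the Lipschitz bound on $u_j$ absorbs the error $|b-b_j||\nabla u_j|$ in the drift. This yields $\sup|u-u_j|\le C\|b-b_j\|_{L^\infty}+C\|f-f_j\|_{L^\infty}+\sup|g_j-u|$ near the boundary, hence uniform convergence.
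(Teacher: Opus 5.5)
Your construction is the paper's: mollify $b$ and $f$, use a mollified copy of $u$ as exterior datum, solve the regularized Dirichlet problem in $B_{R''}(x_0)$ by Perron's method, bound $u_j$ uniformly by the maximum principle, pass to half-relaxed limits by stability, and identify both limits with $u$. At that last step the paper invokes comparison for the limiting equation, which is your first option. Your caution about a merely $C^\gamma$ drift therefore applies to the paper's sketch too.

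Your alternative, comparing the classical $u_j$ directly with $u$, is a good idea. No doubling of variables is needed, since $u_j$ plus a constant is itself an admissible test function. As written, however, it does not close:
\begin{itemize}
\item The uniform $C^{1,\alpha}$ bound on $u_j$ holds only on compact subsets of $B_{R''}(x_0)$. Your comparison runs over the whole ball, up to the sphere where the exterior datum is imposed.
\item Near that sphere $u_j$ generically has a boundary layer of order $d^s$ (a drift-dependent power if $s=1/2$). So $\nabla u_j$ is unbounded there and $(b-b_j)\cdot\nabla u_j$ is not uniformly small. The constant in your final inequality would be $\sup_{B_{R''}(x_0)}|\nabla u_j|$, which is in general infinite.
\item A touching argument turns a small defect into a small bound only if a strict supersolution is available.
\item The far exterior enters through a weighted tail of $|u-g_j|$, not through a supremum.
\end{itemize}

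To repair this, fix $\eta>0$. Uniform boundary barriers, together with $g_j\to u$ uniformly near the sphere and continuity of $u$ there, give $\rho<R''$ with $|u-u_j|\le\eta$ on $B_{R''}(x_0)\setminus B_\rho(x_0)$ for all large $j$. On the inner ball, $C_\rho:=\sup_j\|\nabla u_j\|_{L^\infty(B_\rho(x_0))}<\infty$.

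Take $\psi\ge0$ bounded and $C^2$ in $B_\rho(x_0)$ with $(-\Delta)^s\psi+b\cdot\nabla\psi\ge1$ there. For instance, take the zero-exterior solution of $(-\Delta)^s\psi+b_k\cdot\nabla\psi=2$ in $B_{R''}(x_0)$ for one fixed large $k$.

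If $u-u_j-\delta\psi$ exceeds $\eta$ in $B_\rho(x_0)$, its maximum is interior. Test $u$ there with $u_j+\delta\psi+\mathrm{const}$. For large $j$ the pasted tail is nonnegative on $B_{R''+\eta'}(x_0)$, and beyond it is bounded below by $-\tau_j$, where
$\tau_j:=c_{n,s}\sup_{x\in B_\rho(x_0)}\int_{|y-x_0|>R''+\eta'}|u-g_j|(y)\,|x-y|^{-n-2s}\,\differential y\to0$.
This gives $\delta\le\|f-f_j\|_\infty+C_\rho\|b-b_j\|_\infty+\tau_j$. Choosing $\delta$ twice this quantity yields $\limsup_j\sup_{B_\rho(x_0)}|u-u_j|\le\eta$, with no comparison principle for the H\"older drift.

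Two smaller points. First, $(R''^2-|x-x_0|^2)_+^s$ has constant fractional Laplacian but gradient of order $d^{s-1}$. With a drift present it is therefore not a supersolution near the sphere, whatever the regularity of $b_j$. Small powers $d^\beta$ with $0<\beta<s$ work instead, because $(-\Delta)^sd^\beta\gtrsim d^{\beta-2s}$ dominates $\beta\|b_j\|_\infty d^{\beta-1}$ since $2s\ge1$; at $s=1/2$ this is where the smallness of $\beta$ is used. These barriers also give the uniform boundary equicontinuity used above. Second, for $s>1/2$ your Schauder bootstrap presupposes $\nabla u_j\in C^\alpha$, which a Perron solution does not have a priori. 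The paper instead turns the viscosity inequalities into distributional ones via sup- and inf-convolutions and then applies elliptic pseudodifferential regularity.
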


\begin{proof}
\begingroup
Fix $R'<2R$ and an intermediate ball
$B_{R'}(x_0)\Subset B_{R''}(x_0)\Subset B_{2R}(x_0)$.  Choose smooth
$b_j,f_j$ by mollifying on a slightly larger ball.  Then $b_j\to b$ and
$f_j\to f$ in $C^\alpha$ on compact subsets for every $\alpha<\gamma$,
while their $C^\gamma$ norms remain bounded by a fixed multiple of the
corresponding norms of $b$ and $f$.  By truncation followed by
mollification, choose $g_j\in C_c^\infty(\R^n)$
such that
\[
 \begin{aligned}
 g_j&\to u &&\text{in }L^1_{2s}(\R^n)\text{ and locally uniformly in }
 B_{R''}(x_0),\\
 \sup_j\|g_j\|_{L^\infty(\R^n)}&\le
 \|u\|_{L^\infty(\R^n)}+1.
 \end{aligned}
\]
Perron's method, comparison in the bounded class, and stability under
suprema \cite[Theorem~2]{BarlesImbert2008} give a solution of
\[
 \begin{cases}
 (-\Delta)^su_j+b_j\cdot\nabla u_j=f_j&\text{in }B_{R''}(x_0),\\
 u_j=g_j&\text{in }\R^n\setminus B_{R''}(x_0).
 \end{cases}
\]
We first justify that elliptic regularity applies to this Perron solution.
For a linear equation with smooth coefficients, the viscosity inequalities
imply the corresponding distributional inequalities: on a compactly
contained ball, apply the usual sup- and inf-convolutions, test the resulting
almost-everywhere inequalities against nonnegative smooth functions, and let
the convolution parameter tend to zero.  Smoothness of $b_j$ treats the local
term, while the uniform bound controls the nonlocal tail.  Thus $u_j$ solves
the equation distributionally.  Interior smoothness now follows from
elliptic pseudodifferential regularity.  Indeed, the drift is lower order when
$s>1/2$; when $s=1/2$, the principal symbol
$|\xi|+i b_j(x)\cdot\xi$ has modulus at least $|\xi|$.  Hence
$u_j\in C^\infty(B_{R'}(x_0))$.  The maximum principle gives
\[
 \sup_j\|u_j\|_{L^\infty(\R^n)}
 \le C\bigl(\|u\|_{L^\infty(\R^n)}+
             \|f\|_{L^\infty(B_{R''}(x_0))}\bigr).
\]
By \cite[Theorem~1]{BarlesImbert2008}, the upper and lower half-relaxed
limits are a subsolution and a supersolution of the limiting equation.  To
make the nonlocal passage explicit, split the pasted-test integral at a fixed
small radius.  The smooth test function handles the singular part; the
uniform $L^\infty$ bound gives a common integrable majorant for the far part;
and $g_j\to u$ in $L^1_{2s}(\R^n)$ identifies the exterior datum.  Thus the
hypotheses of the stability theorem are satisfied.  Comparison with the
original solution identifies both limits with $u$, and hence
$u_j\to u$ locally uniformly in $B_{R''}(x_0)$.
A diagonal choice as $R'\uparrow2R$ gives a sequence satisfying
\cref{def:approximable-viscosity}.
\endgroup
\end{proof}

\rev{It remains, in the subcritical case, to estimate smooth unit-scale
solutions in terms of the norms preserved by this approximation.}

\section{Analytic ingredients for the interior estimate}\label{sec:estimates}

\rev{The proof uses the drift-free Schauder estimate, Silvestre's critical
estimate, and, for $s>1/2$, interpolation followed by a nested-ball
iteration.  The interpolation step fails at $s=1/2$, where the drift and
diffusion have the same order.}

We use the following fractional Schauder estimate, which is the
fractional-Laplacian case of
\cite[Corollary~3.5]{RosOtonSerra2016}.

\begin{lemma}[fractional Schauder estimate]\label{lem:fractional-schauder}
Let $0<s<1$, $0<\alpha<1$, and $2s+\alpha\notin\mathbb N$.  If
$v\in L^\infty(\R^n)$ satisfies
\[
 (-\Delta)^s v=g\qquad\text{in }B_1,
 \qquad g\in C^\alpha(B_1),
\]
then
\begin{equation}\label{eq:pure-schauder}
 \|v\|_{C^{2s+\alpha}(B_{1/2})}
 \le C\bigl(\|v\|_{L^\infty(\R^n)}
             +\|g\|_{C^\alpha(B_1)}\bigr),
\end{equation}
where $C=C(n,s,\alpha)$.
\end{lemma}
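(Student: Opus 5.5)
The plan is to obtain \eqref{eq:pure-schauder} as a special case of \cite[Corollary~3.5]{RosOtonSerra2016}, checking the hypotheses and norm conventions. As a safeguard, I will also keep a self-contained splitting argument in reserve. First I check that $(-\Delta)^s$, normalized as in \eqref{eq:frac-lap}, is a translation-invariant stable operator whose kernel $c_{n,s}|y|^{-n-2s}$ is symmetric and comparable to $|y|^{-n-2s}$ above and below. Its ellipticity constants depend only on $n,s$, so the constant will be $C=C(n,s,\alpha)$. Second I check that the notion of solution matches. In \cite{RosOtonSerra2016} the equation is understood in the distributional (equivalently viscosity) sense for $v\in L^\infty(\R^n)$, which is exactly our setting. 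The right-hand side there is measured in $C^\alpha(B_1)$ and the solution globally in $L^\infty(\R^n)$. Third I check the convention $C^{2s+\alpha}=C^{k,2s+\alpha-k}$ with $k=\lfloor 2s+\alpha\rfloor$, which is where $2s+\alpha\notin\mathbb N$ is used. Finally, if the cited estimate is stated on $B_{1/2}$ versus $B_1$ with different radii, a covering by finitely many small balls together with the scaling of \cref{lem:scaling} (with $b=0$) adjusts the radii.

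If a direct argument is wanted instead, I would split $v=w+h$. Fix a cutoff $\eta\in C_c^\infty(B_1)$ with $\eta=1$ on $B_{7/8}$. Since $\eta g$ need not be H\"older across $\partial B_1$, I first extend $g|_{B_{15/16}}$ to a $C^\alpha(\R^n)$ function with comparable norm and then cut off. Let $w=\Phi_{n,s}*(\eta g)$ be the Riesz potential; the case $n=2s$ cannot occur since $n\ge2>2s$. Then $(-\Delta)^s w=\eta g$ globally, and $h=v-w$ is $s$-harmonic in $B_{7/8}$.

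For $w$, I would use the mapping $I_{2s}:C^\alpha_c\to C^{2s+\alpha}_{\mathrm{loc}}$, valid when $2s+\alpha\notin\mathbb N$. This follows from a Littlewood--Paley or Zygmund-space argument, or equivalently from differentiating the kernel $|x-y|^{2s-n}$ up to order $k$ and using the standard cancellation estimate for $C^\alpha$ densities. In addition, $\|w\|_{L^\infty(\R^n)}\lesssim\|g\|_{L^\infty}$, because $\eta g$ has compact support and $2s<n$.

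For $h$, I would use the Poisson kernel representation on $B_{3/4}$, valid since $h\in L^\infty(\R^n)$. Differentiating it under the integral gives $\|h\|_{C^k(B_{1/2})}\lesssim\|h\|_{L^\infty(\R^n)}\le\|v\|_{L^\infty}+\|w\|_{L^\infty}$ for every $k$. Adding the two bounds yields \eqref{eq:pure-schauder}.

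The main obstacle is the estimate for $w$: showing that the Riesz potential gains exactly $2s$ H\"older derivatives. This is where the non-integer condition enters, since at integer exponents only a Zygmund bound holds. It is also the point where the careful extension of $g$ matters. I would handle it by the standard dyadic decomposition of the kernel around each point, estimating the local part by cancellation against $g(y)-g(x)$ and the far part by smoothness of the kernel. In the paper itself, however, I would simply invoke \cite[Corollary~3.5]{RosOtonSerra2016} as described in the first paragraph.
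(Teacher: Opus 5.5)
Your proposal is correct and matches the paper. The paper gives no separate argument: it records the lemma as the fractional-Laplacian case of \cite[Corollary~3.5]{RosOtonSerra2016}, which is your primary route, together with the checks you list (ellipticity constants depending only on $n,s$, the notion of solution, the convention $C^{2s+\alpha}=C^{k,2s+\alpha-k}$, and adjustment of radii). The splitting argument you keep in reserve is not needed; in it, $\eta g$ with $\eta\in C_c^\infty(B_1)$ is already in $C^\alpha(\R^n)$, so the preliminary extension of $g$ is unnecessary.
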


We will use its localized, nested-ball consequence.  If
$B_r\Subset B_{r'}\Subset B_1$, multiplication by a cutoff equal to one on
$B_{r'}$ and application of \cref{lem:fractional-schauder} give
\begin{equation}\label{eq:localized-schauder}
 \|v\|_{C^{2s+\alpha}(B_r)}
 \le C_{r'-r}\bigl(
 \|v\|_{L^\infty(\R^n)}
 +\|(-\Delta)^sv\|_{C^\alpha(B_{r'})}\bigr).
\end{equation}
\rev{Covering $B_r$ by balls of radius comparable to $r'-r$, followed by
rescaling and cutoff, gives \eqref{eq:localized-schauder}.  The cutoff
commutator is supported a distance comparable to $r'-r$ from the diagonal;
differentiating its kernel yields
$C_{r'-r}\le C(r'-r)^{-m}$ for some structural $m>0$.}

At the critical endpoint we use the following consequence of Silvestre's
parabolic differentiability theorem.

\begin{lemma}[critical drift estimate]\label{lem:critical}
Let $0<\alpha<1$.  Suppose that $v$ is a bounded viscosity solution of
\begin{equation}\label{eq:critical-equation}
 (-\Delta)^{1/2}v+c(x)\cdot\nabla v=g
 \qquad\text{in }B_1,
\end{equation}
where $c,g\in C^\alpha(B_1)$.  Then
\begin{equation}\label{eq:critical-estimate}
 \|v\|_{C^{1,\alpha}(B_{1/2})}
 \le C\bigl(\|v\|_{L^\infty(\R^n)}
             +\|g\|_{C^\alpha(B_1)}\bigr),
\end{equation}
where $C=C(n,\alpha,\|c\|_{C^\alpha(B_1)})$.
\end{lemma}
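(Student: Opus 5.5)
The plan is to view $v$ as a time-independent solution of the critical parabolic drift--diffusion equation and then invoke Silvestre's differentiability theorem \cite{Silvestre2012Diff}. Set $w(t,x):=v(x)$ for $(t,x)\in(-1,0]\times\R^n$. Because $v$ is a bounded viscosity solution of \eqref{eq:critical-equation} in $B_1$, the function $w$ is a bounded viscosity solution of
\[
 w_t+c(x)\cdot\nabla w+(-\Delta)^{1/2}w=g(x)
 \qquad\text{in }(-1,0]\times B_1 .
\]
To check this, take a parabolic test function $\psi(t,x)$ touching $w$ from above at $(t_0,x_0)$. Its time derivative at the contact point vanishes when $t_0$ is interior. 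In any case $\psi(t_0,\cdot)$ touches $v$ from above at $x_0$ in the elliptic sense, so the pasted-test inequality of \cref{def:viscosity} applies. Also $\partial_t\psi(t_0,x_0)\le 0$ at the terminal time $t_0=0$, and $\partial_t\psi(t_0,x_0)=0$ otherwise, so the parabolic subsolution inequality follows. The supersolution case is symmetric. Since the pasted-test and global-test formulations are equivalent (as noted after \cref{def:viscosity}), this matches the solution notion used in \cite[Section~3]{Silvestre2012Diff}.

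Next I apply Silvestre's theorem in the critical case $s=1/2$. Its hypotheses are that $c$ and $g$ are $C^\alpha$ in space, uniformly in time, which is trivially true here. It gives an estimate of the form
\[
 \sup_{t\in(-1/2,0]}\|w(t,\cdot)\|_{C^{1,\alpha}(B_{1/2})}
 \le C\bigl(\|w\|_{L^\infty((-1,0]\times\R^n)}+\|g\|_{C^\alpha}\bigr),
\]
with $C=C(n,\alpha,\|c\|_{C^\alpha})$. Restricting to any time slice yields \eqref{eq:critical-estimate}.

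Two technical points need attention, and I expect them to be the main obstacle: matching the exact hypotheses and the precise form of Silvestre's theorem.

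\textbf{Domain normalization.} His estimate may be stated on unit-size cylinders with a different inner radius, or with an exponent $\alpha'$ possibly smaller than that of the data, where the full exponent is recovered under the extra assumption that $\alpha$ is not too large. If the radii do not match, I would cover $B_{1/2}$ by small balls, rescale with \cref{lem:scaling} (at $s=1/2$ the drift norm is scale-invariant and only improves in seminorm under shrinking), and sum the local bounds.

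\textbf{Exponent.} If the theorem provides $C^{1,\alpha}$ only for $\alpha$ below a structural threshold, I would first obtain $C^{1,\beta}$ for small $\beta$. Then I would bootstrap: with $\nabla v\in C^\beta$, treat $c\cdot\nabla v$ as a $C^{\min(\alpha,\beta)}$ right-hand side. This is still critical, so \cref{lem:fractional-schauder} alone gives only $C^{1+\min}$, which is circular. The bootstrap must therefore rely on Silvestre's statement itself, and I would lean on his result covering all $\alpha\in(0,1)$ for the critical case. Finally, the dependence of $C$ only on $\|c\|_{C^\alpha(B_1)}$ and not on global bounds on $c$ is handled by extending $c$ and $g$ to $\R^n$ with comparable $C^\alpha$ norms, which does not affect the equation inside $B_1$.
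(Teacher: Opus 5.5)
Your proposal follows the paper's proof: both extend $v$, $c$, $g$ trivially in time and apply \cite[Theorem~1.1]{Silvestre2012Diff} at $s=1/2$, where the coefficient exponent $1-2s+\alpha$ reduces to $\alpha$ and the admissible range $0<\alpha<2s$ is all of $(0,1)$, so the paper needs no covering or bootstrap fallback. Your explicit check that the stationary extension is a parabolic viscosity solution is correct and is a detail the paper leaves implicit. It uses $\partial_t\psi(t_0,x_0)=0$ at interior times, and $\partial_t\psi\le0$ (resp.\ $\ge0$) at the terminal time for test functions touching from above (resp.\ below).
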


\begin{proof}
Define time-independent functions on $[-1,0]\times\R^n$ by
\[
 V(t,x)=v(x),\qquad C(t,x)=c(x),\qquad G(t,x)=g(x).
\]
Then
\[
 V_t+C\cdot\nabla V+(-\Delta)^{1/2}V=G
 \qquad\text{in }[-1,0]\times B_1.
\]
Apply \cite[Theorem~1.1]{Silvestre2012Diff}.  In the notation of that
theorem the coefficient exponent is $1-2s+\alpha$, which equals $\alpha$
when $s=1/2$, and its restriction $0<\alpha<2s$ becomes $0<\alpha<1$.
The theorem yields \eqref{eq:critical-estimate}.  Its proof applies to
viscosity solutions by \cite[Section~3]{Silvestre2012Diff}.
\end{proof}

We also use the standard interpolation inequality in H\"older spaces.

\begin{lemma}[interpolation]\label{lem:interpolation}
Let $s>1/2$, $0<\alpha<1$, and
$B_r\Subset B_{r'}$.  For every $\varepsilon>0$,
\begin{equation}\label{eq:interpolation}
 \|v\|_{C^{1+\alpha}(B_r)}
 \le \varepsilon\|v\|_{C^{2s+\alpha}(B_{r'})}
 +C_{\varepsilon,r'-r}\|v\|_{L^\infty(B_{r'})}.
\end{equation}
\end{lemma}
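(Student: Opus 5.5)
The plan is to reduce \eqref{eq:interpolation} to the classical interpolation inequality between H\"older spaces on a fixed pair of nested balls. Since $s>1/2$, we have $1+\alpha<2s+\alpha$. Also $2s+\alpha<3$, so the top space involves at most two derivatives. Fix $\delta:=r'-r>0$. All constants may depend on $\delta$ (and on $n,s,\alpha$), which is permitted by the notation $C_{\varepsilon,r'-r}$.

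First I would establish the estimate on small balls and then cover. For a point $x\in B_r$, the ball $B_\delta(x)$ lies in $B_{r'}$. On each such ball the Taylor-remainder argument gives the standard inequality
\[
 [\nabla v]_{C^\alpha(B_{\rho}(x))}+\|\nabla v\|_{L^\infty(B_\rho(x))}
 \le C\rho^{2s-1}[v]_{C^{2s+\alpha}(B_{r'})}+C\rho^{-1-\alpha}\|v\|_{L^\infty(B_{r'})}
\]
for $0<\rho\le\delta$. Here $[v]_{C^{2s+\alpha}}$ denotes the top seminorm (of $\nabla v$ or of $D^2v$, according to whether $2s+\alpha<2$ or $>2$). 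Concretely, I would write $v$ near $x$ as its Taylor polynomial of order $\lfloor 2s+\alpha\rfloor$ plus a remainder bounded by $[v]_{C^{2s+\alpha}}|y-x|^{2s+\alpha}$. The coefficients of the polynomial are estimated on $B_\rho$ by finite differences of $v$ at scale $\rho$, which costs $\rho^{-k}\|v\|_{L^\infty}$ for the $k$-th coefficient. The seminorm $[\nabla v]_{C^\alpha}$ is then handled by splitting pairs $|y-z|<\rho$ from pairs $|y-z|\ge\rho$: the first are controlled by the higher regularity, and the second by $2\rho^{-\alpha}\|\nabla v\|_\infty$.

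In the middle regime where $1+\alpha<2s+\alpha<2$, it is cleaner to interpolate $[\nabla v]_{C^\alpha}$ between $\|\nabla v\|_\infty$ and $[\nabla v]_{C^{2s-1+\alpha}}$ directly. Since $\alpha<2s-1+\alpha$, the splitting argument above gives $[\nabla v]_{C^\alpha}\le \rho^{2s-1}[\nabla v]_{C^{2s-1+\alpha}}+2\rho^{-\alpha}\|\nabla v\|_\infty$. The remaining term $\|\nabla v\|_\infty$ is in turn bounded by $\rho^{2s-1+\alpha}[\nabla v]_{C^{2s-1+\alpha}}+C\rho^{-1}\|v\|_\infty$, using the mean value theorem along a segment of length $\rho$.

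To finish, I would choose $\rho=\rho(\varepsilon,\delta)\le\delta$ small enough that $C\rho^{2s-1}\le\varepsilon$. This is possible exactly because $2s-1>0$, and it is the only place where $s>1/2$ enters; at $s=1/2$ the gain factor is $\rho^0$ and the absorption fails, in line with the remark preceding \cref{lem:fractional-schauder}. Taking the supremum over $x\in B_r$ then gives \eqref{eq:interpolation} with $C_{\varepsilon,r'-r}=C\rho^{-1-\alpha}$. The main (mild) obstacle is the bookkeeping for the H\"older seminorm over pairs of points in $B_r$ that are far apart. Such pairs are handled by the $L^\infty$ bound on $\nabla v$ already obtained, so no connectivity issue arises, because $B_r$ is convex.
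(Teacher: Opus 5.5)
Your proof is correct, but it takes a more hands-on route than the paper. The paper rescales by the gap $r'-r$ and cites the standard multiplicative H\"older interpolation inequality $\|v\|_{C^{\tau}(B_r)}\le C\|v\|_{L^\infty(B_{r'})}^{1-\tau/\sigma}\|v\|_{C^{\sigma}(B_{r'})}^{\tau/\sigma}+C_{r'-r}\|v\|_{L^\infty(B_{r'})}$, with $\tau=1+\alpha<\sigma=2s+\alpha$. It then splits the product by Young's inequality.

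You instead prove the additive form directly. You split at a free scale $\rho\le r'-r$, using mean-value/Taylor bounds for $\nabla v$ and short versus long pairs for $[\nabla v]_{C^\alpha}$, and then choose $\rho$ with $C\rho^{2s-1}\le\varepsilon$. The two arguments are equivalent: optimizing your inequality in $\rho$ gives the multiplicative form, and Young's inequality undoes that optimization. In both, $s>1/2$ enters only through the positive gap $\sigma-\tau=2s-1$.

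The paper's version is shorter and treats all non-integer $\sigma$ at once. Yours is self-contained and gives the explicit constant $C_{\varepsilon,r'-r}\lesssim\rho^{-1-\alpha}$, with $\rho$ comparable to $\min\{r'-r,\varepsilon^{1/(2s-1)}\}$ (for $r'-r,\varepsilon\le1$). This makes visible the polynomial dependence on the reciprocal gap that the nested-ball iteration in the proof of \cref{thm:main} needs.

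Two cosmetic remarks. First, if you state the local bound on all of $B_\rho(x)$, take $\rho\le(r'-r)/2$ so that difference quotients at points of $B_\rho(x)$ stay inside $B_{r'}$; alternatively, only evaluate at points of $B_r$, as you effectively do at the end. Second, when $2s+\alpha>2$ you can skip the Taylor-polynomial step. The full norm $\|v\|_{C^{2s+\alpha}(B_{r'})}$ already contains $\|D^2v\|_{L^\infty(B_{r'})}$, and the same short/long splitting then puts the small factor $\rho^{1-\alpha}$ in front of it directly.
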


\begin{proof}
\begingroup
Set $\sigma=2s+\alpha$ and $\tau=1+\alpha$, so that
$0<\tau<\sigma$.  After rescaling by the gap $r'-r$, the standard H\"older
interpolation inequality on concentric unit balls gives
\[
 \|v\|_{C^\tau(B_r)}
 \le C\|v\|_{L^\infty(B_{r'})}^{1-\tau/\sigma}
       \|v\|_{C^\sigma(B_{r'})}^{\tau/\sigma}
 +C_{r'-r}\|v\|_{L^\infty(B_{r'})}.
\]
Young's inequality with exponents $\sigma/\tau$ and
$\sigma/(\sigma-\tau)$ turns the mixed term into
$\varepsilon\|v\|_{C^\sigma(B_{r'})}$ plus a multiple of the
$L^\infty$ norm.  This is \eqref{eq:interpolation}, with all dependence on
$\varepsilon$ and on the gap absorbed into $C_{\varepsilon,r'-r}$.
\endgroup
\end{proof}

\begin{lemma}[nested-balls iteration]\label{lem:nested-balls}
Let $a<A$, let $m>0$, and let $X:[a,A)\to[0,\infty)$ be bounded.
Assume that there are constants $0<\theta<1$, $H\ge0$, and
$C_0\ge1$ such that, for all $a\le r<r'<A$,
\[
 X(r)\le \theta X(r')+C_0(r'-r)^{-m}H.
\]
Then
\[
 X(a)\le C(m,\theta)C_0(A-a)^{-m}H.
\]
\end{lemma}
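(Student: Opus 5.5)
We prove \cref{lem:nested-balls} by the standard geometric-sequence iteration. Fix $\lambda\in(0,1)$ such that $\theta\lambda^{-m}<1$; for definiteness take $\lambda^{m}=(1+\theta)/2$, so that $\theta\lambda^{-m}=2\theta/(1+\theta)<1$. Define radii $r_0=a$ and
\[
 r_{k+1}=r_k+(1-\lambda)\lambda^k(A-a),
\]
so that $r_k\uparrow A$ while $r_k<A$ for every $k$. All $r_k$ lie in $[a,A)$, and $r_{k+1}-r_k=(1-\lambda)\lambda^k(A-a)$.

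Apply the hypothesis with $r=r_k$ and $r'=r_{k+1}$:
\[
 X(r_k)\le\theta X(r_{k+1})+C_0(1-\lambda)^{-m}\lambda^{-km}(A-a)^{-m}H.
\]
Iterating this inequality $N$ times starting from $k=0$ gives
\[
 X(a)\le\theta^NX(r_N)+C_0(1-\lambda)^{-m}(A-a)^{-m}H\sum_{k=0}^{N-1}(\theta\lambda^{-m})^k.
\]
Since $X$ is bounded on $[a,A)$, we have $\theta^NX(r_N)\le\theta^N\sup X\to0$. By the choice of $\lambda$, the geometric series converges to $(1-\theta\lambda^{-m})^{-1}$. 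Letting $N\to\infty$ yields
\[
 X(a)\le\frac{(1-\lambda)^{-m}}{1-\theta\lambda^{-m}}\,C_0(A-a)^{-m}H,
\]
and the prefactor depends only on $m$ and $\theta$.

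The only delicate step is the choice of $\lambda$. The gaps must shrink geometrically so that the radii remain inside $[a,A)$, but they must not shrink so fast that the growth factor $\lambda^{-m}$ overwhelms the contraction $\theta$. This is exactly what the condition $\theta\lambda^{-m}<1$ guarantees. The boundedness assumption on $X$ is used only to discard the remainder term $\theta^N X(r_N)$.
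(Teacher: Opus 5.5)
Your proof is correct and is essentially the paper's argument: the same geometric radii $r_k=a+(A-a)(1-\lambda^k)$ with $\theta\lambda^{-m}<1$, the same telescoped inequality summed as a geometric series, and the same use of boundedness of $X$ to discard $\theta^NX(r_N)$. Your explicit choice $\lambda^m=(1+\theta)/2$ only makes the constant $C(m,\theta)$ concrete.
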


\begin{proof}
Choose $\tau\in(0,1)$ so that $\theta\tau^{-m}<1$, and set
\[
 r_j=a+(A-a)(1-\tau^j),\qquad j=0,1,2,\ldots.
\]
Then $r_0=a$, $r_j\uparrow A$, and
$r_{j+1}-r_j=(A-a)(1-\tau)\tau^j$.  Repeated use of the hypothesis
with $(r,r')=(r_j,r_{j+1})$ gives, for every $N\ge1$,
\[
 X(a)\le \theta^NX(r_N)+C_0H\sum_{j=0}^{N-1}
 \theta^j\bigl((A-a)(1-\tau)\tau^j\bigr)^{-m}.
\]
The sum is at most
\[
 (A-a)^{-m}(1-\tau)^{-m}
 \sum_{j=0}^{\infty}(\theta\tau^{-m})^j,
\]
which is finite.  Since $X$ is bounded, the remainder
$\theta^NX(r_N)$ tends to zero.  Letting $N\to\infty$ proves the
assertion.
\end{proof}

\section{Proof of the main theorem}\label{sec:proof}

\rev{After scaling, the critical case follows from
\cref{lem:critical}.  For $s>1/2$, we prove the estimate first for smooth
solutions and then use \cref{lem:canonical-approximation}.}

\begin{proof}[Proof of \cref{thm:main}]
Apply the scaling in \cref{lem:scaling}.  It suffices to prove
\begin{equation}\label{eq:unit-estimate}
 \|U\|_{C^{1,\alpha}(B_{1/2})}
 \le C\bigl(\|U\|_{L^\infty(\R^n)}+\|F\|_{C^\gamma(B_2)}\bigr),
\end{equation}
where
\[
 (-\Delta)^sU+B\cdot\nabla U=F\qquad\text{in }B_2
\]
and $C$ has the asserted dependence on $B$.

\smallskip
\noindent\emph{Critical case $s=1/2$.}
Since $U$ is a viscosity solution and $C^\gamma(B_2)$ embeds continuously
into $C^\alpha(B_2)$ for $0<\alpha<\gamma$, all assumptions of
\cref{lem:critical} hold directly on $B_1$: the equation is valid there,
$U$ is bounded on $\R^n$, and $B,F\in C^\alpha(B_1)$.  Therefore
\[
 \|U\|_{C^{1,\alpha}(B_{1/2})}
 \le C\bigl(\|U\|_{L^\infty(\R^n)}
             +\|F\|_{C^\alpha(B_1)}\bigr)
 \le C\bigl(\|U\|_{L^\infty(\R^n)}
             +\|F\|_{C^\gamma(B_2)}\bigr).
\]
This proves \eqref{eq:unit-estimate} at the endpoint without assuming any
prior Lipschitz regularity.

\smallskip
\noindent\emph{Subcritical case $s>1/2$: the a priori estimate.}
Choose once and for all
\[
 \beta\in(\alpha,\gamma),\qquad 2s+\beta\notin\mathbb N.
\]
We first suppose that $U$, $B$, and $F$ are smooth.  For
$1/2\le r<1$, write
\[
 X(r)=\|U\|_{C^{2s+\beta}(B_r)}.
\]
On $B_1$,
\[
 (-\Delta)^sU=F-B\cdot\nabla U.
\]
Because the present a priori argument assumes that $U$ is smooth in $B_2$,
the function $X$ is bounded on $[1/2,1)$; thus the iteration lemma applies
without an implicit limiting argument.
\rev{Fix $1/2\le r<r''<1$ and set $r'=(r+r'')/2$.  Then both gaps
$r'-r$ and $r''-r'$ equal $(r''-r)/2$.  From
\eqref{eq:localized-schauder},}
\begin{equation}\label{eq:subcritical-first}
 X(r)\le C_{r'-r}\bigl(
 \|U\|_{L^\infty(\R^n)}+\|F\|_{C^\beta(B_{r'})}
 +\|B\cdot\nabla U\|_{C^\beta(B_{r'})}\bigr).
\end{equation}
The H\"older product inequality gives
\begin{equation}\label{eq:product}
 \|B\cdot\nabla U\|_{C^\beta(B_{r'})}
 \le C\|B\|_{C^\beta(B_1)}
       \|U\|_{C^{1+\beta}(B_{r'})}.
\end{equation}
By \cref{lem:interpolation}, for every $\varepsilon>0$,
\begin{equation}\label{eq:subcritical-interpolation}
 \|U\|_{C^{1+\beta}(B_{r'})}
 \le \varepsilon X(r'')
 +C_{\varepsilon,r''-r'}\|U\|_{L^\infty(B_{r''})}.
\end{equation}
\rev{Choose $\varepsilon$ so that
\[
 C_{r'-r}\,C\varepsilon\|B\|_{C^\beta(B_1)}\le\frac12.
\]
Both constants depend at most polynomially on the reciprocal gaps.  Since
$r'-r=r''-r'=(r''-r)/2$, combining
\eqref{eq:subcritical-first}--\eqref{eq:subcritical-interpolation} gives a
nested-ball inequality of the form
\begin{equation}\label{eq:nested}
 X(r)\le\frac12X(r'')
 +C(r''-r)^{-m}\bigl(
 \|U\|_{L^\infty(\R^n)}+\|F\|_{C^\beta(B_1)}\bigr).
\end{equation}
Here $m>0$ and $C$ depend only on $n,s,\beta$ and
$\|B\|_{C^\beta(B_1)}$.}
Applying \cref{lem:nested-balls} with $a=1/2$, $A=1$, and
$H=\|U\|_{L^\infty(\R^n)}+\|F\|_{C^\beta(B_1)}$ yields
\begin{equation}\label{eq:subcritical-schauder}
 \|U\|_{C^{2s+\beta}(B_{1/2})}
 \le C\bigl(
 \|U\|_{L^\infty(\R^n)}+\|F\|_{C^\beta(B_1)}\bigr).
\end{equation}
Here $C$ depends on $n,s,\beta$ and
$\|B\|_{C^\beta(B_1)}$.  Since $2s+\beta>1+\beta$, this estimate gives
$U\in C^{1,\beta}(B_{1/2})$ and hence \eqref{eq:unit-estimate}.

\smallskip
\noindent\emph{Passage to viscosity solutions.}
\rev{By \cref{lem:canonical-approximation}, after unit scaling there is a
regularized local approximation $(U_j,B_j,F_j)$ in the sense of
\cref{def:approximable-viscosity}.  The estimate at the auxiliary exponent
$\beta$ is uniform in $j$, since the global $L^\infty$ norms of $U_j$ and
the local coefficient norms are uniformly bounded.  Hence
\[
 \sup_j\|U_j\|_{C^{1,\beta}(B_{1/2})}<\infty.
\]
By Arzel\`a--Ascoli, a subsequence of $U_j$ and $\nabla U_j$ converges
uniformly on compact subsets of $B_{1/2}$.  Since $U_j\to U$ locally
uniformly, the limit of the gradients is $\nabla U$.  For
$x,y\in B_{1/2}$ with $x\ne y$,
\[
 |\nabla U(x)-\nabla U(y)|
 =\lim_{j\to\infty}|\nabla U_j(x)-\nabla U_j(y)|
 \le C|x-y|^\beta.
\]
Therefore $U\in C^{1,\beta}(B_{1/2})$, which implies the asserted
$C^{1,\alpha}$ estimate.}

\rev{Scaling back in \eqref{eq:rescaling},
\[
 \nabla U(x)=R\nabla u(x_0+Rx),\qquad
 [\nabla U]_{C^\alpha(B_{1/2})}
 =R^{1+\alpha}[\nabla u]_{C^\alpha(B_{R/2}(x_0))}
\]
and
\[
 \|F\|_{C^\gamma(B_2)}
 \le R^{2s}\|f\|_{L^\infty(B_{2R}(x_0))}
 +R^{2s+\gamma}[f]_{C^\gamma(B_{2R}(x_0))},
\]
together with \eqref{eq:scaled-b}, give \eqref{eq:main-estimate}.}
\end{proof}

\begin{remark}[regularization]
The main theorem is stated entirely in the viscosity framework.  The
canonical approximation in \cref{lem:canonical-approximation} supplies the
smooth functions needed for the a priori argument; no differentiability or
additional approximation hypothesis is imposed on the original solution.
\end{remark}

For comparison with the drift-free weak-to-viscosity result in
\cite[Theorem~1]{ServadeiValdinoci2014}, we record the corresponding local statement
under a divergence-free Lipschitz drift.

\rev{The additional assumptions below are used only in the energy
argument.  The converse implication is obtained from the common
approximation and energy uniqueness, rather than being attributed to the
drift-free theorem.}

\begin{corollary}[equivalence of the two solution notions]
\label{cor:weak-viscosity-equivalence}
Assume the hypotheses of \cref{thm:main} and, in addition, that
$b\in W^{1,\infty}_{\mathrm{loc}}(B_{2R}(x_0);\R^n)$ and
$\divergence b=0$ in distributions.  Let
\[
 u\in L^\infty(\R^n)\cap C(B_{2R}(x_0))
       \cap \rev{H^s(\R^n)}.
\]
Then $u$ is a local energy weak solution in $B_{2R}(x_0)$ if and only if
it is a viscosity solution there.  Consequently, every such weak solution
satisfies the estimate in \cref{thm:main}.
\end{corollary}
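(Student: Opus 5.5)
Both implications go through the canonical approximation of \cref{lem:canonical-approximation} combined with uniqueness of weak solutions, which we get from the drift cancellation of \cref{lem:drift-form}. Fix a ball $D=B_{R''}(x_0)\Subset B_{2R}(x_0)$. Let $(u_j,b_j,f_j)$ be the regularized approximation with exterior data $g_j\to u$. We take the mollified drifts $b_j$ divergence free; mollification commutes with $\divergence$, so this is automatic on a slightly smaller ball. Each $u_j$ is smooth in $D$, so it is simultaneously a classical, viscosity, and local energy weak solution there. To pass from pointwise to weak, multiply by $\varphi\in C_c^\infty(D)$, symmetrize the nonlocal integral, and integrate the drift term by parts using $\divergence b_j=0$.

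\emph{Viscosity $\Rightarrow$ weak.} Let $u$ be a viscosity solution. By \cref{lem:canonical-approximation}, $u_j\to u$ locally uniformly in $D$, with $\sup_j\|u_j\|_{L^\infty}$ bounded. Next I would obtain a uniform $H^s$ bound for $u_j$ on $D$. Set $w_j=u_j-\tilde g_j$, where $\tilde g_j$ is a smooth $H^s$ extension of the exterior data, chosen close to $u$ in $H^s(\R^n)$; this uses $u\in H^s(\R^n)$. Test the equation for $w_j$ with $w_j\in H^s_0(D)$. By \eqref{eq:drift-cancellation}, $\mathfrak d_{b_j}(w_j,w_j)=0$, and \cref{lem:drift-form} bounds the remaining drift terms by $\|b_j\|_{W^{1,\infty}}$ times $H^s$ norms. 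Together these give $\sup_j\|u_j\|_{H^s}<\infty$. Extract a weakly convergent subsequence in $H^s$; its limit must equal $u$. Then pass to the limit in \eqref{eq:weak-form} for $(u_j,b_j,f_j)$: the bilinear form converges by weak convergence, and $\int u_jb_j\cdot\nabla\varphi\to\int ub\cdot\nabla\varphi$ by uniform convergence. Hence $u$ is a local energy weak solution in $D$, and since $R''<2R$ was arbitrary, in $B_{2R}(x_0)$.

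\emph{Weak $\Rightarrow$ viscosity.} Let $u$ be a weak solution. Build the same approximation $u_j$, now using only $u$ as exterior datum; the construction in \cref{lem:canonical-approximation} uses Perron's method and does not need $u$ to be a viscosity solution. The argument of the previous paragraph shows that a subsequence converges weakly in $H^s$ to a weak solution $v$ of the Dirichlet problem in $D$ with exterior datum $u$. By \cite[Theorem~1]{BarlesImbert2008}, the half-relaxed limits of $u_j$ are viscosity sub- and supersolutions. I would expect $u_j$ to converge locally uniformly to a viscosity solution $\tilde u$ equal to $v$; to get this, apply comparison to the half-relaxed limits, using the exterior datum, which is continuous near $\partial D$ because $u\in C(B_{2R}(x_0))$. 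Now $u-v\in H^s_0(D)$ solves the homogeneous weak problem. Testing with $u-v$ and using \eqref{eq:drift-cancellation} gives $[u-v]_{H^s}=0$, hence $u=v=\tilde u$, and $u$ is a viscosity solution in $D$. The final sentence of the corollary then follows directly from \cref{thm:main}.

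The main obstacle is the \emph{boundary behaviour in the second direction}: identifying the locally uniform limit of $u_j$ with the energy limit, and making sure comparison applies up to $\partial D$. The $u_j$ are only interior smooth, so I would choose $D$ so that $u$ is continuous on a neighbourhood of $\overline D$ and invoke the comparison principle used in \cref{lem:canonical-approximation}. A secondary technical point is the uniform $H^s$ bound at $s=1/2$, where \cref{lem:drift-form} is exactly at the borderline and relies on $\divergence b_j=0$.
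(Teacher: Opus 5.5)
Your overall architecture (approximation, energy uniqueness via \eqref{eq:drift-cancellation}, viscosity stability and comparison) matches the paper's. However, your choice of approximating sequence leaves a genuine gap.

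You take the Perron solutions of \cref{lem:canonical-approximation} and obtain $\sup_j\|u_j\|_{H^s}<\infty$ by testing the equation for $w_j=u_j-\tilde g_j$ with $w_j$ itself. This presupposes three things: that $u_j\in H^s(\R^n)$, that $w_j\in H_0^s(D)$, and that the weak identity holds for test functions in $H_0^s(D)$ rather than only for $\varphi\in C_c^\infty(D)$. None of this is known for a Perron solution. The lemma provides only an $L^\infty$ bound and smoothness on $B_{R'}(x_0)$, which lies strictly inside the Perron ball $D=B_{R''}(x_0)$, so $u_j$ is not ``smooth in $D$''. Nothing controls the fractional energy of $u_j$ near $\partial D$. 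Your symmetrization argument gives the identity only against compactly supported $\varphi$, so it does not permit $\varphi=w_j$. Likewise, \eqref{eq:drift-cancellation} cannot be applied to a function not known to lie in $H_0^s(D)$. The paper says explicitly that no energy identity is inferred from a Perron solution.

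In the direction viscosity $\Rightarrow$ weak you can in fact bypass the bound, because $u\in H^s(\R^n)$ is assumed. The convergence $\int u_j(-\Delta)^s\varphi\to\int u(-\Delta)^s\varphi$ follows from $u_j\to u$ in $L^1_{2s}(\R^n)$, and $\int u(-\Delta)^s\varphi=\mathcal E_s(u,\varphi)$. In the direction weak $\Rightarrow$ viscosity, however, the gap is essential. Energy uniqueness applies only to a limit $v$ known to be an energy solution of the Dirichlet problem with $u-v\in H_0^s(D)$, and Perron approximants do not provide that.

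The repair, which is the paper's Step~1, is to build the approximants variationally.
\begin{itemize}
\item On $H_0^s(D)$ the form $\mathcal E_s+\mathfrak d_{b_j}$ is continuous and, by \cref{lem:drift-form}, coercive. Lax--Milgram gives $w_j\in H_0^s(D)$, and $u_j=w_j+g_j$ solves the Dirichlet problem in the energy sense, with $g_j\to u$ in $H^s(\R^n)\cap L^1_{2s}(\R^n)$.
\item Testing with $w_j$ is now legitimate and gives the uniform $H^s$ bound.
\item A Stampacchia truncation gives the uniform $L^\infty$ bound; there $\divergence b_j=0$ forces $\mathfrak d_{b_j}(u_j,(u_j-k)^+)=0$.
\item Interior regularity for smooth coefficients makes each $u_j$ a classical, hence viscosity, solution.
\end{itemize}
Once a single sequence carries both structures, the point you single out as the main obstacle disappears. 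A subsequence converges weakly in $H^s$ and locally uniformly in $D$ (by interior estimates and Arzel\`a--Ascoli) to one limit $v$. That $v$ is simultaneously an energy solution with $v-u\in H_0^s(D)$ and a viscosity solution, with no comparison of half-relaxed limits needed. Energy uniqueness then gives $v=u$ when $u$ is weak, and viscosity comparison gives $v=u$ when $u$ is viscosity.
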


\begin{proof}
Fix concentric balls $D\Subset D'\Subset B_{2R}(x_0)$.  We prove the
equivalence in $D$; since $D$ is arbitrary, this gives the local statement.

\smallskip
\noindent\emph{Step 1: common variational and viscosity approximations.}
\begingroup
Choose a ball $D''$ with
$D'\Subset D''\Subset B_{2R}(x_0)$.  Mollify $b$ and $f$ in $D''$ with
radii smaller than $\operatorname{dist}(D',\partial D'')$.  Their
restrictions $b_j,f_j$ to $D'$ satisfy
\begin{align*}
 b_j&\to b\quad\text{uniformly on }D',
 &\sup_j\|b_j\|_{W^{1,\infty}(D')}&<\infty,\\
 f_j&\to f\quad\text{uniformly on }D',
 &\divergence b_j&=0.
\end{align*}
Convolution preserves $\divergence b_j=0$ on $D'$, so no global
divergence-free extension is needed.  Truncation and mollification give
$g_j\in C_c^\infty(\R^n)$ such that
\[
 g_j\to u\quad\text{in }H^s(\R^n)\cap L^1_{2s}(\R^n)
 \text{ and uniformly near }\overline D,
\]
with a uniform $L^\infty$ bound.  Write
\[
 \mathcal E_s(v,\varphi):=\frac{c_{n,s}}2
 \iint_{\R^n\times\R^n}
 \frac{(v(x)-v(y))(\varphi(x)-\varphi(y))}
 {|x-y|^{n+2s}}\,\differential x\differential y.
\]
For $w,\varphi\in H_0^s(D)$ set
\[
 \mathcal A_j(w,\varphi)
 :=\mathcal E_s(w,\varphi)+\mathfrak d_{b_j}(w,\varphi).
\]
By \cref{lem:drift-form}, $\mathcal A_j$ is continuous, and
$\mathcal A_j(w,w)=\mathcal E_s(w,w)$.  The fractional Poincar\'e
inequality therefore makes $\mathcal A_j$ coercive on $H_0^s(D)$.
Lax--Milgram gives a unique $w_j\in H_0^s(D)$ satisfying
\begin{equation}\label{eq:variational-approximation}
 \mathcal A_j(w_j,\varphi)
 =\int_Df_j\varphi\,\differential x
  -\mathcal E_s(g_j,\varphi)-\mathfrak d_{b_j}(g_j,\varphi)
 \quad\text{for every }\varphi\in H_0^s(D).
\end{equation}
Set $u_j=w_j+g_j$.  Thus $u_j-g_j\in H_0^s(D)$ and $u_j$ is, by
construction, the energy solution of
\begin{equation}\label{eq:smooth-dirichlet-equivalence}
 \begin{cases}
 (-\Delta)^su_j+b_j\cdot\nabla u_j=f_j&\text{in }D,\\
 u_j=g_j&\text{in }\R^n\setminus D.
 \end{cases}
\end{equation}
Testing \eqref{eq:variational-approximation} with $w_j$ and using
\eqref{eq:drift-cancellation} gives a uniform $H^s(\R^n)$ bound for
$w_j$, hence for $u_j$.  For $k\ge\|g_j\|_{L^\infty}$, the truncation
$\eta=(u_j-k)^+$ belongs to $H_0^s(D)$ and satisfies
\[
 \mathcal E_s(u_j,\eta)\ge\mathcal E_s(\eta,\eta),
 \qquad \mathfrak d_{b_j}(u_j,\eta)=0.
\]
The second identity follows by writing $u_j=\eta+k$ on $\{\eta>0\}$
and using $\divergence b_j=0$.  Thus
$\mathcal E_s(\eta,\eta)\le\|f_j\|_\infty\|\eta\|_{L^1(D)}$.
The same argument applies to $(u_j+k)^-$; the fractional
Sobolev--Stampacchia iteration gives
\[
 \sup_j\|u_j\|_{L^\infty(\R^n)}
 \le C\left(\sup_j\|g_j\|_{L^\infty(\R^n)}
             +\sup_j\|f_j\|_{L^\infty(D)}\right).
\]
Finally, elliptic pseudodifferential regularity for
the smooth-coefficient equation gives $u_j\in C^\infty(D)$.  Hence
\eqref{eq:smooth-dirichlet-equivalence} holds pointwise, and $u_j$ is also
a viscosity solution.
Hence,
after taking a subsequence,
\begin{equation}\label{eq:two-compactness-modes}
 u_j\to v\quad\text{locally uniformly in }D,
 \qquad u_j\rightharpoonup v\quad\text{weakly in }H^s_{\mathrm{loc}}(D).
\end{equation}
The first convergence follows from the interior H\"older estimate and the
Arzel\`a--Ascoli theorem; the second follows from the energy bound.  We spell
out the nonlocal passage to the limit.  Given
$\sup_j\|u_j\|_{L^\infty(\R^n)}<\infty$, the local uniform convergence in
$D$, and the almost-everywhere convergence $u_j=g_j\to u$ outside $D$
also give, after a subsequence,
$u_j\to v$ in $L^1_{2s}(\R^n)$ by dominated convergence.  For
$\varphi\in C_c^\infty(D)$, choose $D_0\Subset D$ containing its support.
In the bilinear integral, weak $H^s(D_0)$ convergence treats the part over
$D_0\times D_0$.  \rev{On the complementary region, the support of
$\varphi$ keeps the kernel away from the diagonal.}  Convergence in
$L^1_{2s}(\R^n)$ supplies an integrable majorant.  The drift term converges
because $b_j\to b$ uniformly and $\varphi$ is smooth.  Hence the energy
identity passes to the limit.  Viscosity stability applied to the locally
uniform convergence shows independently that $v$ is a viscosity solution.
Thus the variationally constructed sequence has a limit carrying both
solution notions; no energy identity is inferred from a Perron solution.
\endgroup

\smallskip
\noindent\emph{Step 2: uniqueness in the energy class.}
Let $v_1,v_2$ be two energy weak solutions in $D$ with the same exterior
data and belonging to $H^s(\R^n)$, and put $z=v_1-v_2$.  Then
$z\in H_0^s(D)$.  Subtracting the two weak
identities and using smooth approximations of $z$ as test functions gives
\[
 \frac{c_{n,s}}2\iint_{\R^n\times\R^n}
 \frac{|z(x)-z(y)|^2}{|x-y|^{n+2s}}\,\differential x\differential y
 +\mathfrak d_b(z,z)=0.
\]
The second term is zero by \cref{lem:drift-form}.  The first term is
nonnegative, so it vanishes.  Hence $z$ is almost everywhere constant on
$\R^n$; because $z=0$ on $\R^n\setminus D$, this constant is zero.  Energy
solutions with fixed exterior data are therefore unique.

\smallskip
\noindent\emph{Step 3: weak implies viscosity.}
Suppose that $u$ is a weak solution.  In Step~1 choose $g_j$ converging to
the exterior values of $u$.  The weak limit $v$ has the same exterior data
and is a weak solution of the same equation.  Step~2 gives $v=u$ almost
everywhere.  Both are continuous in $D$, hence $v=u$ pointwise.  Since $v$
is a viscosity solution, so is $u$.

\smallskip
\noindent\emph{Step 4: viscosity implies weak.}
Suppose that $u$ is a viscosity solution and use the same exterior data in
Step~1.  The locally uniform limit $v$ is a viscosity solution of the same
Dirichlet problem.  The comparison principle for bounded viscosity
solutions gives $v=u$ in $D$.  Since Step~1 also shows that $v$ is an energy
weak solution, $u$ is a weak solution in $D$.  This completes both
implications.
\end{proof}

\begin{remark}[merely H\"older drifts]\label{rem:holder-equivalence}
If $b$ is only $C^\gamma$, the viscosity theorem remains valid as
stated, but the estimate used to extend $\mathfrak d_b$ to
$H_0^s\times H_0^s$ is not automatic.  In that regime the same weak
conclusion holds for weak solutions known a priori to arise from the smooth
approximation in Step~1, or, equivalently for the present purpose, for weak
solutions enjoying the corresponding comparison property.  We do not
identify an arbitrary distributional solution with a viscosity solution
without one of these additional hypotheses.
\end{remark}

\begin{remark}[integer intermediate exponents]
The fractional Schauder estimate is invoked at an auxiliary exponent
$\beta\in(\alpha,\gamma)$ chosen so that
$2s+\beta\notin\mathbb N$.  Such a choice is always possible,
and the resulting $C^{1,\beta}$ estimate implies the asserted
$C^{1,\alpha}$ estimate.  Thus no integer-exponent exclusion is needed in
\cref{thm:main}.  A Zygmund formulation would be needed only to claim an
exact integer intermediate $C^{2s+\beta}$ endpoint.
\end{remark}

\section{Measure data and potential estimates}\label{sec:measure}

\rev{We first establish the whole-space potential estimates and their limit
as $s\uparrow1$.  We then pass to the drifted Green kernel: probabilistic
harmonicity is linked to the viscosity formulation, the kernel is
differentiated by the interior estimate, and the resulting bounds are used
to construct the Green operator on measures.}

We consider
\begin{equation}\label{eq:measure-equation}
 (-\Delta)^s u+b\cdot\nabla u=\mu,
\end{equation}
where $\mu$ is a finite signed Radon measure.  The gradient estimate has a
positive potential order only when $s>1/2$; throughout
\cref{sec:measure}, unless explicitly stated otherwise, we therefore assume
\[
 \frac12<s<1,\qquad n>2s.
\]
The restriction $n>2s$ is automatic under the standing assumption $n\ge2$.

Recall from \eqref{eq:riesz-potential-main} the truncated Riesz
potential $\I_\beta^{|\mu|}(x,R)$.
For finite $R$, Fubini's theorem gives the equivalent kernel form
\begin{equation}\label{eq:riesz-kernel-equivalence}
 \I_\beta^{|\mu|}(x,R)
 \asymp
 \int_{B_R(x)}\frac{\differential|\mu|(y)}{|x-y|^{n-\beta}},
\end{equation}
with constants depending only on $n$ and $\beta$.  We write
$\I_\beta^{|\mu|}(x)$ for the untruncated potential defined above.
If $t=|x-y|<R$, then
\[
 \int_t^R\rho^{\beta-n-1}\,\differential\rho
 =\frac{t^{\beta-n}-R^{\beta-n}}{n-\beta}.
\]
Tonelli's theorem applied to this identity gives the kernel integral up to
the terminal contribution $R^{\beta-n}|\mu|(B_R(x))$; that contribution is
the second term in \eqref{eq:riesz-potential-main}.  This proves
\eqref{eq:riesz-kernel-equivalence} in both directions, including measures
concentrated near $\partial B_R(x)$.

The whole-space model fixes the orders of the two potentials.  We begin with
the proof of \cref{thm:whole-space-potential}.

\begin{proof}[Proof of \cref{thm:whole-space-potential}]
\begingroup
The kernel $\Phi_{n,s}$ is locally integrable, so $u\in L^1_{\mathrm{loc}}$.
Tonelli's theorem applies on each compact set because
$\mu$ is finite and compactly supported.  For
$\varphi\in C_c^\infty(\R^n)$, Fubini's theorem is legitimate: near
$x=y$ the fundamental kernel is locally integrable, while at infinity
$(-\Delta)^s\varphi(x)=O(|x|^{-n-2s})$.  The fundamental-solution identity
therefore gives
\begin{align*}
 \int_{\R^n}u(-\Delta)^s\varphi
 &=\int_{\R^n}\!\left[\int_{\R^n}
 \Phi_{n,s}(x-y)(-\Delta)^s\varphi(x)\,\differential x\right]
 \differential\mu(y)\\
 &=\int_{\R^n}\varphi\,\differential\mu.
\end{align*}
The layer-cake formula yields
\[
 \int_{\R^n}|x-y|^{-(n-\beta)}\,\differential|\mu|(y)
 =(n-\beta)\I_\beta^{|\mu|}(x)
\]
whenever either side is finite.  This proves the estimate for $u$.  Moreover,
$|\nabla\Phi_{n,s}(z)|\le C|z|^{-(n-(2s-1))}$.  To identify the gradient,
first truncate the convolution to $|x-y|>\varepsilon$.  The truncated
potential is differentiable, and its gradient is the convolution with
$\nabla\Phi_{n,s}$.  The latter kernels converge in
$L^1_{\mathrm{loc}}$ as $\varepsilon\downarrow0$, so their limit is the
distributional gradient of $u$.  At every point where
$\I_{2s-1}^{|\mu|}(x)<\infty$, the limiting integral is absolutely
convergent and satisfies the asserted pointwise bound.  Taking
$\mu=\delta_0$ gives $u(x)=\Phi_{n,s}(x)$ and proves sharpness of both
orders.
\endgroup
\end{proof}

\rev{For the limit $s\uparrow1$, kernel convergence is uniform away from
the pole; at points of $\operatorname{supp}\mu$ we use dominated
convergence.}

\begin{proof}[Proof of \cref{thm:s-to-one}]
For $n\ge3$, continuity of the Gamma function gives
\[
 \kappa_{n,s}\longrightarrow
 \frac{\Gamma((n-2)/2)}{4\pi^{n/2}}
 =\frac{1}{(n-2)|S^{n-1}|},
\]
while $|x|^{2s-n}\to|x|^{2-n}$ locally uniformly away from the origin.
This proves \eqref{eq:newton-limit}.  Away from
$\operatorname{supp}\mu$, the kernels and their first derivatives converge
uniformly on the relevant compact set, so the potential convergence is
locally uniform.  At a point in the support, take $s$ close enough to one
that $2s\ge2-\delta$ and $2s-1\ge1-\delta$.  On $|x-y|<1$ the kernels
are dominated by those in \eqref{eq:s-limit-dominating-potentials}, while
compact support controls $|x-y|\ge1$.  \rev{The kernel constants are
uniform for $s$ in a fixed interval below $1$, so the majorants are
independent of $s$.}  Dominated
convergence applies.

For $n=2$, put $\varepsilon=1-s$.  The expansions
\[
 \Gamma(\varepsilon)=\frac1\varepsilon-\gamma_E+O(\varepsilon),
 \qquad |x|^{-2\varepsilon}=1-2\varepsilon\log|x|+O(\varepsilon^2)
\]
and $2^{2s}\pi\Gamma(s)=4\pi+O(\varepsilon)$ yield
\[
 \Phi_{2,s}(x)=\frac{1}{4\pi\varepsilon}
 -\frac{1}{2\pi}\log|x|+c_0+O(\varepsilon(1+|\log|x||^2))
\]
locally uniformly away from the origin.  This proves
\eqref{eq:log-limit}; \rev{the error is uniform on compact subsets of
$\R^2\setminus\{0\}$.}
Differentiating the explicit kernel proves
\eqref{eq:gradient-log-limit}.  Finally,
\eqref{eq:weak-exponent-limit} is immediate.
\end{proof}

\begin{remark}[uniformity of constants near the local limit]
\label{rem:uniform-s}
The constants in the present paper are allowed to depend on $s$.  Thus the
results above prove consistency of the scaling, kernels, and mapping
exponents, but not a uniform estimate of the form
\[
 \sup_{s\in[s_0,1)}C_s<\infty
 \qquad(s_0>1/2).
\]
Such a result would require, in particular, Schauder estimates and Green
kernel derivative estimates uniform for $s\in[s_0,1)$, together with
uniform control of the drift perturbation.  The absorption mechanism itself
does not degenerate on this interval, since $2s-1\ge2s_0-1>0$ and bounded
drifts have a uniformly subcritical small-scale factor
$r^{2s-1}\le r^{2s_0-1}$ for $0<r\le1$.
\end{remark}

\rev{We now pass from the whole-space kernels to the bounded-domain
Dirichlet problem.  Green-function comparison gives the zeroth-order
kernel bound; a localized Dynkin argument then places the probabilistically
harmonic Green sections in the viscosity framework and permits the interior
estimate to yield the first-derivative bound.}  Let $D\subset\R^n$ be a
bounded $C^{1,1}$ domain.  For $\beta=2s-1$, the gradient Kato class is
\begin{equation}\label{eq:kato-measure-section}
 \mathcal K_n^{\beta}
 :=\left\{b:\ \lim_{r\downarrow0}\sup_{x\in\R^n}
 \int_{B_r(x)}\frac{|b(y)|}{|x-y|^{n-\beta}}\,\differential y=0\right\}.
\end{equation}
Every bounded drift belongs to this class.  When $b$ is given only on $D$,
membership in $\mathcal K_n^\beta$ refers to its zero extension to $\R^n$.
We use the following Green-kernel convention.  The operator acts in the
first variable,
\[
 \Lb^xG_b^D(x,y)=\delta_y\quad\text{in }D,
 \qquad G_b^D(\cdot,y)=0\quad\text{in }\R^n\setminus D,
\]
in the distributional sense.  Equivalently, for every
$h\in C_c^\infty(D)$, the function
\[
 \mathcal G_bh(x):=\int_DG_b^D(x,y)h(y)\,\differential y
\]
is the zero-exterior solution of $\Lb\mathcal G_bh=h$.  Since $\Lb$ is not
self-adjoint, the orientation of the variables is essential.  In
distributional form, the first-variable identity is
\begin{equation}\label{eq:green-adjoint-identity}
 \int_D G_b^D(x,y)(-\Delta)^s\varphi(x)\,\differential x
 +\int_D b(x)\cdot\nabla_xG_b^D(x,y)\,\varphi(x)\,\differential x
 =\varphi(y).
\end{equation}
\rev{Identity \eqref{eq:green-adjoint-identity} is used when both integrals
are defined.  Once \cref{lem:green-gradient} has been established, their
absolute convergence follows from \eqref{eq:green-upper} and
\eqref{eq:green-gradient}.}  This
formulation avoids pairing a rough solution with the distribution
$-\divergence(b\varphi)$.

For such gradient perturbations,
the Green function $G_b^D$ of the zero-exterior operator is comparable with
the Green function $G_0^D$ of the fractional Laplacian; see
\cite{BogdanJakubowski2012}.  \rev{More precisely, that reference uses the
generator convention $\Delta^{\alpha/2}+\widetilde b\cdot\nabla$, with
$\alpha=2s$ and $\Delta^{\alpha/2}=-(-\Delta)^s$; its result is therefore
applied here with $\widetilde b=-b$.}  In particular,
\begin{equation}\label{eq:green-upper}
 0\le G_b^D(x,y)\le C G_0^D(x,y)
 \le C|x-y|^{2s-n}.
\end{equation}

\begin{proposition}[Green-kernel framework]\label{prop:green-framework}
Under the assumptions above, the Green operator is positive and satisfies
\eqref{eq:green-upper}.  \rev{The kernel is jointly continuous off the
diagonal; for each $K\Subset D$, it extends continuously to
\[
 \{(x,y)\in K\times\overline D:x\ne y\},
\]
with value zero for $y\in\partial D$.}  For $h\in C_c^\infty(D)$, the
Green potential $\mathcal G_bh$ is the zero-exterior distributional solution of
$\Lb u=h$.  \rev{After \cref{lem:green-gradient}, the corresponding
section identity is represented by the absolutely convergent integrals in
\eqref{eq:green-adjoint-identity}.  Joint continuity is
\cite[Lemma~7]{BogdanJakubowski2012}; the comparison estimate is
\cite[Theorem~1]{BogdanJakubowski2012}; and the Green-operator identity is
based on the perturbation formula in
\cite[Section~3]{BogdanJakubowski2012}.  The asserted boundary extension
then follows from comparison with the drift-free Green kernel.  Moreover,
in the terminology of \cite[Definition~2]{BogdanJakubowski2012}, each
section $G_b^D(\cdot,y)$ is harmonic in $D\setminus\{y\}$ for the Markov
generator $-L_b$.}
\end{proposition}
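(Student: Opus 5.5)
The proposition is essentially an assembly of results from \cite{BogdanJakubowski2012}, applied with $\alpha=2s$ and $\widetilde b=-b$. I would first check that the hypotheses there are met. The domain $D$ is bounded and $C^{1,1}$, and $b\in\mathcal K_n^{2s-1}$ is exactly the gradient Kato condition used in that reference, which requires $\alpha>1$, that is, $s>1/2$. Under these conditions the killed process has a transition density built from the drift-free one by the perturbation (Duhamel) series of \cite[Section~3]{BogdanJakubowski2012}. The Green kernel is then defined by
\[
 G_b^D(x,y)=\int_0^\infty p_b^D(t,x,y)\,\differential t .
\]
Positivity of the Green operator follows because this density is nonnegative. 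The comparison estimate \eqref{eq:green-upper} is \cite[Theorem~1]{BogdanJakubowski2012}. The last inequality in \eqref{eq:green-upper} comes from the classical bound $G_0^D(x,y)\le C|x-y|^{2s-n}$, valid since $n>2s$. Joint continuity off the diagonal is \cite[Lemma~7]{BogdanJakubowski2012}.

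Next I would prove the boundary extension. Fix $K\Subset D$. Take $x\in K$ and let $y$ tend to a point of $\partial D$. Then $|x-y|\ge\operatorname{dist}(K,\partial D)/2$ eventually. The drift-free boundary estimate gives
\[
 G_0^D(x,y)\lesssim \delta(y)^s\,|x-y|^{-n}\,\delta(x)^s\lesssim_K \delta(y)^s .
\]
By \eqref{eq:green-upper}, $0\le G_b^D(x,y)\le C_K\delta(y)^s$ uniformly in $x\in K$. Setting the kernel to zero on $K\times\partial D$ therefore gives a continuous extension. Continuity at boundary points is uniform in $x$, and joint continuity holds in the interior.

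For the operator identity, I would use the resolvent form of the perturbation formula:
\[
 G_b^D=G_0^D+G_0^D\,(\widetilde b\cdot\nabla)\,G_b^D ,
\]
understood in the sense of \cite[Section~3]{BogdanJakubowski2012} and oriented in the first variable. Take $h\in C_c^\infty(D)$ and set $u=\mathcal G_bh$. Then $u=G_0^D\bigl(h-b\cdot\nabla u\bigr)$, so $u$ vanishes outside $D$. Testing against $(-\Delta)^s\varphi$ with $\varphi\in C_c^\infty(D)$ and using the drift-free Green identity gives the distributional equation $\Lb u=h$.

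The main obstacle is justifying this step, including that $\nabla u$ is integrable. The Kato condition gives $\int_D |b|\,|\nabla_x G_0^D|$ finite locally, and the Dynkin/generator identification in \cite{BogdanJakubowski2012} shows that $\mathcal G_b$ inverts the generator $-\Lb$ on such $h$. The representation by \eqref{eq:green-adjoint-identity} is only asserted after \cref{lem:green-gradient}, so I postpone it.

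Finally, harmonicity of each section $G_b^D(\cdot,y)$ in $D\setminus\{y\}$, in the sense of \cite[Definition~2]{BogdanJakubowski2012}, follows from the strong Markov property. For an open $U\Subset D\setminus\{y\}$, the first exit time $\tau_U$ satisfies
\[
 G_b^D(x,y)=\mathbb E^x\bigl[G_b^D(X_{\tau_U},y)\bigr],
\]
because the occupation density at $y$ accumulates only after the process leaves $U$. This is the standard argument recorded in that reference for the generator $-L_b$.
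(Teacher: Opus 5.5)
Your proposal is correct and takes essentially the paper's approach. The paper likewise assembles \cite{BogdanJakubowski2012}: Theorem~1 for \eqref{eq:green-upper}, Lemma~7 for joint continuity, the Section~3 perturbation formula for the operator identity, and Definition~2 for harmonicity of the sections. It obtains the boundary extension from comparison with $G_0^D$, which you make explicit through $G_0^D(x,y)\lesssim\delta(x)^s\delta(y)^s|x-y|^{-n}$.

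As in the paper, the distributional identity for $\mathcal G_bh$ is ultimately taken from the reference rather than proved. One caution: your ordering of the resolvent identity, with the gradient on $G_b^D$, requires a priori control of $\nabla u$ up to $\partial D$. By contrast, Dynkin's formula applied to $G_0^Dh$ directly gives the other ordering, $G_b^D=G_0^D+G_b^D(\widetilde b\cdot\nabla)G_0^D$, with the gradient on the drift-free kernel.
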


\begingroup
The preceding probabilistic statement must be matched with the pasted-test
notion in \cref{def:viscosity}.  We first record the stopped identity that
allows this passage.  Let $\widetilde{\mathbb E}^{\,x}$ denote expectation
for the process with generator
$\mathcal A_b:=-L_b=-(-\Delta)^s-b\cdot\nabla$.

\begin{lemma}[localized Dynkin formula]\label{lem:localized-dynkin}
Let $U\Subset D$ be a ball, and let
$w\in L^1_{2s}(\R^n)$ be $C^2$ in a neighborhood of $\overline U$.
If $\mathcal A_bw$ is continuous on $\overline U$, then, for $x\in U$,
\begin{equation}\label{eq:localized-dynkin}
 \widetilde{\mathbb E}^{\,x}w(X_{\tau_U})-w(x)
 =\widetilde{\mathbb E}^{\,x}\int_0^{\tau_U}
   \mathcal A_bw(X_t)\,\differential t,
 \qquad
 \tau_U:=\inf\{t>0:X_t\notin U\}.
\end{equation}
Both sides of \eqref{eq:localized-dynkin} are finite.
\end{lemma}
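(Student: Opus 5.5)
We will prove \eqref{eq:localized-dynkin} by a localization that reduces it to the standard Dynkin formula for test functions in the domain of the generator. Fix a ball $U'$ with $U\Subset U'\Subset D$ such that $w$ is $C^2$ on a neighborhood of $\overline{U'}$. We choose a cutoff $\eta\in C_c^\infty(U')$ with $\eta\equiv1$ on a neighborhood of $\overline U$, and we split
\[
 w=\eta w+(1-\eta)w=:w_1+w_2 .
\]

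\emph{Step 1: the compactly supported part.} The function $w_1$ belongs to $C_c^2(\R^n)$. For the process with generator $\mathcal A_b$ built in \cite{BogdanJakubowski2012} from the Kato-class perturbation, $C_c^2$ functions satisfy Dynkin's formula with the pointwise generator $\mathcal A_bw_1=-(-\Delta)^sw_1-b\cdot\nabla w_1$. This is the martingale-problem characterization. Optional stopping at $\tau_U\wedge t$ is legitimate because $\mathcal A_bw_1$ is bounded on $\overline U$. Letting $t\to\infty$ then requires $\widetilde{\mathbb E}^{\,x}\tau_U<\infty$. This follows from the Green comparison \eqref{eq:green-upper}, since
\[
 \widetilde{\mathbb E}^{\,x}\tau_U=\int_UG_b^U(x,y)\,\differential y\le C\int_U|x-y|^{2s-n}\,\differential y<\infty .
\]

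\emph{Step 2: the far part.} On $U$ the function $w_2$ vanishes near each point, so the drift term vanishes there. We get
\[
 \mathcal A_bw_2(z)=c_{n,s}\int_{\R^n\setminus\{\eta=1\}}\frac{w_2(y)}{|z-y|^{n+2s}}\,\differential y
 \quad\text{for }z\in U .
\]
This is continuous and bounded on $\overline U$ because $w\in L^1_{2s}(\R^n)$ and the kernel stays away from the diagonal. For the jump part we use the Lévy system (Ikeda--Watanabe formula) of the process. The drift perturbation does not change the jump kernel $c_{n,s}|z-y|^{-n-2s}$, since the process has the same jumps and only the continuous part is altered. For $x\in U$ this gives
\[
 \widetilde{\mathbb E}^{\,x}w_2(X_{\tau_U})=\widetilde{\mathbb E}^{\,x}\int_0^{\tau_U}\!\int_{\R^n}\frac{c_{n,s}w_2(y)}{|X_t-y|^{n+2s}}\,\differential y\,\differential t
 =\widetilde{\mathbb E}^{\,x}\int_0^{\tau_U}\mathcal A_bw_2(X_t)\,\differential t .
\]
The left side of \eqref{eq:localized-dynkin} for $w_2$ also contains $-w_2(x)$, and $w_2(x)=0$. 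So the formula holds for $w_2$. To justify the Lévy system for unbounded $w_2$, we first apply it to the truncations $\max(\min(w_2,k),-k)$ and then pass to the limit by monotone convergence on positive and negative parts. Integrability comes from $\sup_{z\in\overline U}\int|w_2(y)||z-y|^{-n-2s}\,\differential y<\infty$ together with $\widetilde{\mathbb E}^{\,x}\tau_U<\infty$.

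\emph{Step 3: conclusion.} Adding the two identities gives \eqref{eq:localized-dynkin}, because $\mathcal A_b$ is linear and $\mathcal A_bw=\mathcal A_bw_1+\mathcal A_bw_2$ pointwise on $U$. Finiteness of the right side follows from $|\mathcal A_bw|\le C$ on $\overline U$ (by hypothesis) and $\widetilde{\mathbb E}^{\,x}\tau_U<\infty$. Finiteness of the left side then follows from the identity and from Step 2's bound for the far part.

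\emph{Main obstacle.} We expect the hardest point to be justifying that the drifted process, defined in \cite{BogdanJakubowski2012} through perturbation series for the semigroup or Green function rather than through an SDE, solves the martingale problem for $C_c^2$ functions and has the unperturbed Lévy system. Our first approach is to argue at the level of Green functions instead. By the perturbation formula, $G_b^U=G_0^U+G_0^U(b\cdot\nabla)G_b^U$, so both sides of Steps 1--2 can be rewritten as integrals against $G_b^U(x,\cdot)$. For the exit distribution we would use the Ikeda--Watanabe representation
\[
 \widetilde{\mathbb E}^{\,x}g(X_{\tau_U})=\int_UG_b^U(x,z)\int_{\R^n\setminus U}\frac{c_{n,s}\,g(y)}{|z-y|^{n+2s}}\,\differential y\,\differential z ,
\]
valid for $g$ supported away from $\overline U$, as recorded by Bogdan--Jakubowski. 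The identity for $w_1$ then reduces to the distributional Green identity $\Lb\mathcal G_b^U h=h$ from \cref{prop:green-framework}, applied with $h=-\mathcal A_bw_1$ on $U$.
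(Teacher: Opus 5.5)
Your proposal is correct, but it is organized differently from the paper's proof.

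\textbf{The paper's route.} The paper does not split $w$. It approximates $w$ itself by $w_k\in C_c^\infty(\R^n)$ converging in $C^2(\overline{U'})$ and in $L^1_{2s}(\R^n)$. It then applies the generator identity of \cite{BogdanJakubowski2012} and the strong Markov property to each $w_k$, and passes to the limit. There the Poisson-kernel representation serves only as a continuity estimate: it bounds the far part of the exit terms by the $L^1_{2s}$ norm. The time integral is handled by uniform convergence of $\mathcal A_bw_k$ on $\overline U$ together with $\widetilde{\mathbb E}^{\,x}\tau_U<\infty$.

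\textbf{Your route.} You use the same representation as an exact identity. Because $w_2=(1-\eta)w$ vanishes on a neighborhood of $\overline U$, the Ikeda--Watanabe formula gives $\widetilde{\mathbb E}^{\,x}w_2(X_{\tau_U})=\int_UG_b^U(x,z)\,\mathcal A_bw_2(z)\,\differential z$ directly. This needs no limiting argument and no no-boundary-mass statement. It also makes transparent that the nonlocal tail of $\mathcal A_bw$ is exactly the jump intensity into the far region.

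\textbf{What this costs.} You need Dynkin's formula for $w_1\in C_c^2$, whereas the cited generator identity is stated for smooth test functions. Closing this is a routine mollification. Uniform $C^2$ convergence of $w_1*\rho_\varepsilon$ gives uniform convergence of $\mathcal A_b$ on $\overline U$ when $b$ is bounded there, as in the setting of \cref{prop:green-viscosity}. The exit terms then converge by bounded convergence. This is simpler than the paper's approximation, because $w_1$ has compact support and no $L^1_{2s}$ tail control is needed.

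\textbf{Two small remarks.}
\begin{enumerate}
\item Optional stopping at the bounded time $\tau_U\wedge t$ needs no hypothesis. What actually justifies letting $t\to\infty$ is that $\mathcal A_bw_1=\mathcal A_bw-\mathcal A_bw_2$ is bounded on $\overline U$, by the hypothesis on $\mathcal A_bw$ and your formula for $\mathcal A_bw_2$, together with $\widetilde{\mathbb E}^{\,x}\tau_U<\infty$.
\item Your fallback via the perturbation formula for $G_b^U$ is unnecessary. The generator identity and the Markov property already give the martingale property you use.
\end{enumerate}
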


\begin{proof}
Choose $U'$ with $\overline U\subset U'\Subset D$ such that $w$ is $C^2$
in a neighborhood of $\overline{U'}$.  A local extension, mollification,
and cutoff give $w_k\in C_c^\infty(\R^n)$ such that
\[
 w_k\longrightarrow w\quad\text{in }C^2(\overline{U'})
 \quad\text{and in }L^1_{2s}(\R^n).
\]
Consequently, the principal-value part is controlled by the local $C^2$
convergence and the nonlocal tail by the weighted $L^1$ convergence; thus
$\mathcal A_bw_k\to\mathcal A_bw$ uniformly on $\overline U$.  The
generator identity \cite[(36)]{BogdanJakubowski2012} and the strong Markov
property give, for each $k$,
\[
 \widetilde{\mathbb E}^{\,x}w_k(X_{\tau_U})-w_k(x)
 =\widetilde{\mathbb E}^{\,x}\int_0^{\tau_U}
   \mathcal A_bw_k(X_t)\,\differential t.
\]
The exit-distribution formula \cite[Lemmas~6 and 14]{BogdanJakubowski2012}
has no boundary mass.  On $U'\setminus U$, the $C^2$ convergence therefore
gives convergence of the exit terms, while the Poisson-kernel
representation \cite[(38)--(39)]{BogdanJakubowski2012} controls their far
part by the $L^1_{2s}$ norm.  Finally,
$\widetilde{\mathbb E}^{\,x}\tau_U<\infty$ by
\cite[Lemma~7]{BogdanJakubowski2012}.  Uniform convergence of the
generators on $U$ now permits passage to the limit in the stopped identity
and proves \eqref{eq:localized-dynkin}, including finiteness.
\end{proof}

\begin{proposition}[probabilistic harmonicity implies viscosity harmonicity]
\label{prop:green-viscosity}
Under the assumptions of \cref{prop:green-framework}, suppose in addition
that $b\in C^\gamma_{\mathrm{loc}}(D)$.  For every $y\in D$, the section
\[
 v(x):=G_b^D(x,y),\qquad v=0\quad\text{in }\R^n\setminus D,
\]
is a viscosity solution of
\begin{equation}\label{eq:green-viscosity-equation}
 (-\Delta)^sv+b\cdot\nabla v=0
 \qquad\text{in }D\setminus\{y\}.
\end{equation}
\end{proposition}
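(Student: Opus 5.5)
We argue by contradiction, using the localized Dynkin formula (\cref{lem:localized-dynkin}) to convert a strict failure of the viscosity inequality into a violation of the mean-value (harmonicity) property of $v$ supplied by \cref{prop:green-framework}. We treat the subsolution property; the supersolution case is symmetric.

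\emph{Preliminary facts about $v$.} By \cref{prop:green-framework}, $v$ is continuous in $D\setminus\{y\}$, vanishes outside $D$, and satisfies $0\le v\le C|\cdot-y|^{2s-n}$. Hence $v\in L^1_{2s}(\R^n)$. Probabilistic harmonicity in the sense of \cite[Definition~2]{BogdanJakubowski2012} gives, for every ball $U\Subset D\setminus\{y\}$ and $x\in U$,
\[
 v(x)=\widetilde{\mathbb E}^{\,x}v(X_{\tau_U}).
\]

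\emph{Setting up the contradiction.} Let $x_0\in D\setminus\{y\}$, let $B_\rho(x_0)\Subset D\setminus\{y\}$, and let $\phi\in C^2(B_\rho(x_0))$ touch $v$ from above at $x_0$. Suppose that for some $r<\rho$,
\[
 (-\Delta)^s\phi^{\,r,v}_{x_0}(x_0)+b(x_0)\cdot\nabla\phi(x_0)=2\eta>0 .
\]
Following the standard reduction, we first make the touching strict. Replace $\phi$ by $\phi+\delta|x-x_0|^2$ for small $\delta$. This changes the left-hand side by $O(\delta)$: the singular part of the integral moves by $O(\delta)$ and the drift term is unchanged at $x_0$. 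Next we need a $C^2$ function near a closed ball in order to apply \cref{lem:localized-dynkin}. Shrink to radii $r_1<r$, and define
\[
 w:=\phi \text{ on } B_{r}(x_0),\qquad w:=v \text{ outside } B_r(x_0).
\]
This $w$ is not globally $C^2$. To apply the lemma, which only requires $C^2$ regularity near $\overline U$ and membership in $L^1_{2s}$, we take $U=B_{r_1}(x_0)$ with $r_1<r$. Then $w$ is $C^2$ on a neighborhood of $\overline U$, namely $B_r(x_0)$. The function $\mathcal A_bw$ is continuous on $\overline U$: the principal-value part is continuous by the $C^2$ regularity, the tail is continuous in $x$ by dominated convergence using $v\in L^1_{2s}$ and the distance $r-r_1$, and $b$ is continuous because $b\in C^\gamma_{\mathrm{loc}}$. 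At $x_0$ we have $\mathcal A_bw(x_0)=-2\eta$, so by continuity $\mathcal A_bw\le-\eta$ on $U$ after shrinking $r_1$.

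\emph{The contradiction.} Since $w\ge v$ everywhere and $w(x_0)=v(x_0)$, \cref{lem:localized-dynkin} gives
\[
 \widetilde{\mathbb E}^{\,x_0}v(X_{\tau_U})-v(x_0)
 \le \widetilde{\mathbb E}^{\,x_0}w(X_{\tau_U})-w(x_0)
 \le-\eta\,\widetilde{\mathbb E}^{\,x_0}\tau_U .
\]
We have $\widetilde{\mathbb E}^{\,x_0}\tau_U>0$ because the paths are right-continuous and start inside the open ball. The left-hand side vanishes by harmonicity, which is a contradiction. Hence
\[
 (-\Delta)^s\phi^{\,r,v}_{x_0}(x_0)+b(x_0)\cdot\nabla\phi(x_0)\le0 .
\]
Continuity of $v$ in $D\setminus\{y\}$ then upgrades the sub- and supersolution properties to the viscosity solution property.

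\emph{Expected main obstacle.} The delicate step is justifying the mean-value identity up to exit from $U$ when the process jumps far away, possibly near the pole $y$ or outside $D$. Specifically, one must know that $\widetilde{\mathbb E}^{\,x_0}v(X_{\tau_U})$ is finite and that the harmonicity definition applies to $U\Subset D\setminus\{y\}$. For this we use the Poisson-kernel representation \cite[(38)--(39)]{BogdanJakubowski2012}, which dominates the exit distribution by $C\,|z-x_0|^{-n-2s}\,\differential z$ away from $U$. This bound makes the exit term integrable against $v\le C|\cdot-y|^{2s-n}$, since that function is locally integrable near $y$ and bounded elsewhere.
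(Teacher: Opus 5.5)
Your proposal is correct and is essentially the paper's argument: the mean-value identity from probabilistic harmonicity, the localized Dynkin formula applied to the pasted function $w\ge v$ on a small ball about $x_0$, and continuity of $\mathcal A_bw$ at $x_0$. The only difference is that the paper divides by $\widetilde{\mathbb E}^{\,x_0}\tau_{B_\delta}$ and lets $\delta\downarrow0$ where you argue by contradiction; the strict-touching perturbation $\phi+\delta|x-x_0|^2$ is never used and can be dropped.
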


\begin{proof}
By \eqref{eq:green-upper}, the singularity at $y$ is locally integrable;
since $D$ is bounded and $v$ vanishes outside $D$, it follows that
$v\in L^1_{2s}(\R^n)$.  The function is continuous away from $y$ by
\cref{prop:green-framework}.  Moreover, its probabilistic harmonicity says
that for every ball $U\Subset D\setminus\{y\}$ and every $x\in U$,
\begin{equation}\label{eq:green-mean-value}
 v(x)=\widetilde{\mathbb E}^{\,x}v(X_{\tau_U}).
\end{equation}

Fix $x_0\in D\setminus\{y\}$, and let $\phi\in C^2(B_\rho(x_0))$ touch
$v$ from above there.  For an arbitrary $r\in(0,\rho)$, set
$w=\phi^{\,r,v}_{x_0}$ as in \eqref{eq:pasted-test}.  Thus $w\ge v$ on
$\R^n$ and $w(x_0)=v(x_0)$.  Choose
$0<\delta<r$ with $\overline{B_\delta(x_0)}\subset B_r(x_0)$.  From
\eqref{eq:green-mean-value} and \cref{lem:localized-dynkin},
\[
 0\le \widetilde{\mathbb E}^{\,x_0}
       w(X_{\tau_{B_\delta}})-w(x_0)
 =\widetilde{\mathbb E}^{\,x_0}\int_0^{\tau_{B_\delta}}
       \mathcal A_bw(X_t)\,\differential t.
\]
Because $\mathcal A_bw$ is continuous near $x_0$,
\[
 \left|\frac{\widetilde{\mathbb E}^{\,x_0}
 \int_0^{\tau_{B_\delta}}
 [\mathcal A_bw(X_t)-\mathcal A_bw(x_0)]\,\differential t}
 {\widetilde{\mathbb E}^{\,x_0}\tau_{B_\delta}}\right|
 \le \sup_{B_\delta(x_0)}
 |\mathcal A_bw-\mathcal A_bw(x_0)|\longrightarrow0.
\]
Dividing by $\widetilde{\mathbb E}^{\,x_0}\tau_{B_\delta}$ and letting
$\delta\downarrow0$ yields $\mathcal A_bw(x_0)\ge0$, or equivalently
$L_bw(x_0)\le0$.  This is precisely the subsolution inequality in
\cref{def:viscosity}.  A test function touching from below gives
$w\le v$ and reverses the inequalities, proving the supersolution
property.  Since $r$ was arbitrary, \eqref{eq:green-viscosity-equation}
holds in the pasted-test sense.
\end{proof}
\endgroup

\rev{With the probabilistic and viscosity notions now aligned, the interior
estimate applies to Green sections.  The following cutoff lemma records the
localized form and the tail term needed below.}

\begin{lemma}[localized gradient estimate with tail]
\label{lem:localized-gradient-tail}
Let $1/2<s<1$, $0<\alpha<\gamma<1$, and
$B_{4r}(x_0)\Subset D$.  Assume that
\[
 v\in C(B_{4r}(x_0))\cap L^1_{2s}(\R^n),
 \qquad b,g\in C^\gamma(B_{4r}(x_0)),
\]
and that $v$ is a viscosity solution of
\[
 (-\Delta)^sv+b\cdot\nabla v=g
 \qquad\text{in }B_{4r}(x_0).
\]
A classical solution is, in particular, covered.  Then
$v\in C^{1,\alpha}(B_{r/2}(x_0))$, and
\begin{align}\label{eq:localized-gradient-tail}
 |\nabla v(x_0)|\le C\Bigg[&r^{-1}\|v\|_{L^\infty(B_{4r}(x_0))}
 +r^{2s-1}\|g\|_{L^\infty(B_{4r}(x_0))}\notag\\
 &+r^{2s+\gamma-1}[g]_{C^\gamma(B_{4r}(x_0))}
 +r^{2s-1}\int_{\R^n\setminus B_{2r}(x_0)}
 \frac{|v(y)|}{|x_0-y|^{n+2s}}\,\differential y\Bigg].
\end{align}
The constant depends only on the structural parameters and the scaled
$C^\gamma$ norm of $b$.
\end{lemma}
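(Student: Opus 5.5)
The plan is to reduce to \cref{thm:main} by cutting off the far part of $v$ and treating the cut-off piece as a right-hand-side perturbation. After translation and the scaling of \cref{lem:scaling} (with $R=r$), I will assume $x_0=0$ and $r=1$. The scaled quantities are:
\begin{itemize}
\item the right-hand side becomes $r^{2s}g$;
\item the drift becomes $r^{2s-1}b$, whose $C^\gamma$ norm on $B_4$ is the scaled norm in the statement;
\item the tail integral picks up the factor $r^{2s}$.
\end{itemize}
Multiplying the resulting unit-scale estimate by $r^{-1}$ (from $\nabla U(0)=r\nabla v(x_0)$) then produces exactly the four terms in \eqref{eq:localized-gradient-tail}. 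Thus it suffices to show, at unit scale,
\[
 |\nabla v(0)|\le C\Bigl(\|v\|_{L^\infty(B_4)}+\|g\|_{C^\gamma(B_4)}
 +\int_{\R^n\setminus B_2}\frac{|v(y)|}{|y|^{n+2s}}\,\differential y\Bigr).
\]

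Fix a cutoff $\eta\in C_c^\infty(B_{3/2})$ with $\eta\equiv1$ on $B_{5/4}$, and split $v=w+z$ with $w=\eta v$ and $z=(1-\eta)v$. Then $w$ is bounded on $\R^n$ with $\|w\|_{L^\infty(\R^n)}\le\|v\|_{L^\infty(B_4)}$, and $w=v$ on $B_{5/4}$. For $x\in B_1$, since $z$ vanishes on $B_{5/4}$, the function $(-\Delta)^s z(x)=-c_{n,s}\int z(y)|x-y|^{-n-2s}\,\differential y$ is a smooth function on $B_1$, because the kernel is nonsingular there. I will then check that $w$ is a viscosity solution in $B_1$ of
\[
 (-\Delta)^sw+b\cdot\nabla w=g+c_{n,s}\int_{\R^n\setminus B_{5/4}}\frac{z(y)}{|x-y|^{n+2s}}\,\differential y=:\tilde g .
\]
This follows from the pasted-test definition: a test function touching $w$ at $x_0\in B_1$ also touches $v$, since $w=v$ near $x_0$, and for the pasted functions $\phi^{r,w}_{x_0}$ and $\phi^{r,v}_{x_0}$ the nonlocal integrals differ exactly by the integral of $z$ against the kernel. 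Applying \cref{thm:main} on $B_1$, with $R=1/2$ after one further harmless scaling, gives
\[
 |\nabla w(0)|\le C\bigl(\|w\|_{L^\infty}+\|\tilde g\|_{C^\gamma(B_1)}\bigr).
\]
Since $\nabla v=\nabla w$ near $0$, the $C^{1,\alpha}$ regularity on $B_{r/2}$ follows by the same argument centred at each point, or directly from \cref{thm:main}.

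The main obstacle is estimating $\|\tilde g\|_{C^\gamma(B_1)}$, specifically the tail part. The far region $|y|\ge2$ is easy: there $|x-y|\simeq|y|$ uniformly for $x\in B_1$, and the $x$-derivative of the kernel is bounded by $C|y|^{-n-2s-1}$. Both the sup norm and the Lipschitz (hence $C^\gamma$) norm are therefore bounded by $C\int_{\R^n\setminus B_2}|v||y|^{-n-2s}$. The intermediate annulus $5/4\le|y|\le2$ is controlled by $\|v\|_{L^\infty(B_4)}$, since $|x-y|\ge1/4$ there and the kernel is smooth with bounded derivatives. The finitely many constants depend only on $n$, $s$, and the cutoff, which completes the unit-scale bound.
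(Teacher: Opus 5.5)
Your proposal is correct and follows the paper's proof. In both, you cut off with $w=\eta v$, treat the far part $c_{n,s}\int(1-\eta(y))v(y)|x-y|^{-n-2s}\,\differential y$ as an extra smooth right-hand side, apply \cref{thm:main} at a smaller radius, and bound the sup and $C^\gamma$ norms of this commutator by the tail; your pasted-test computation checks the viscosity identity that the paper only asserts. The differences are cosmetic: the paper does not rescale first and takes $\eta\equiv1$ on $B_{2r}(x_0)$, so its commutator lives exactly on $\R^n\setminus B_{2r}(x_0)$, whereas your unit-scale cutoff at radius $5/4$ means you absorb the annulus $5/4\le|y|\le2$ into the $L^\infty(B_4)$ term.
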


\begin{proof}
Choose $\eta\in C_c^\infty(B_{3r}(x_0))$ such that
$0\le\eta\le1$, $\eta\equiv1$ on $B_{2r}(x_0)$, and
$|D^k\eta|\le C_kr^{-k}$.  Put $w=\eta v$.  Since $\nabla\eta=0$ in
$B_{2r}(x_0)$, a direct expansion of the fractional Laplacian gives, for
$x\in B_{2r}(x_0)$,
\[
 (-\Delta)^sw+b\cdot\nabla w
 =g+\mathcal C_\eta[v],
\qquad
 \mathcal C_\eta[v](x)
 =c_{n,s}\int_{\R^n}
 \frac{(1-\eta(y))v(y)}{|x-y|^{n+2s}}\,\differential y.
\]
The integral has no singularity because $1-\eta$ vanishes in
$B_{2r}(x_0)$.  For $x\in B_r(x_0)$ and $k=0,1$, differentiation under the
integral is justified by absolute convergence and yields
\[
 |D_x^k\mathcal C_\eta[v](x)|
 \le C\int_{\R^n\setminus B_{2r}(x_0)}
 \frac{|v(y)|}{|x_0-y|^{n+2s+k}}\,\differential y.
\]
Using the mean-value theorem for the kernel gives, for $x,x'\in B_r(x_0)$,
\[
 \frac{|\mathcal C_\eta[v](x)-\mathcal C_\eta[v](x')|}
 {|x-x'|^\gamma}
 \le Cr^{1-\gamma}
 \int_{\R^n\setminus B_{2r}(x_0)}
 \frac{|v(y)|}{|x_0-y|^{n+2s+1}}\,\differential y.
\]
Since $|x_0-y|\ge2r$ on the integration region, both estimates imply
\[
 \|\mathcal C_\eta[v]\|_{L^\infty(B_r)}
 +r^\gamma[\mathcal C_\eta[v]]_{C^\gamma(B_r)}
 \le C\int_{\R^n\setminus B_{2r}(x_0)}
 \frac{|v(y)|}{|x_0-y|^{n+2s}}\,\differential y.
\]
Moreover,
$\|w\|_{L^\infty(\R^n)}\le\|v\|_{L^\infty(B_{3r}(x_0))}$.
In $B_{2r}(x_0)$ the cutoff function is constant, so the preceding identity
holds in the viscosity sense.  The function $w$ is bounded on all of $\R^n$
because it has compact support and $v$ is continuous on $B_{4r}(x_0)$.
\rev{The commutator is smooth on compact subsets of $B_{2r}(x_0)$, so
\cref{thm:main} gives $v=w\in C^{1,\alpha}(B_{r/2}(x_0))$.  Applying the
same theorem on $B_r(x_0)$ with radius $r/2$, and using the bounds above,
gives \eqref{eq:localized-gradient-tail}.}
\end{proof}

\begin{lemma}[interior derivative of the Green kernel]
\label{lem:green-gradient}
Assume in addition that $b\in C^\gamma_{\mathrm{loc}}(D)$.  Let
$K\Subset D$.  Then, for $x\in K$, $y\in D$, and $x\ne y$,
\begin{equation}\label{eq:green-gradient}
 |\nabla_xG_b^D(x,y)|
 \le C_K |x-y|^{2s-n-1}.
\end{equation}
The constant depends on $n,s,\gamma,D,K$, the local $C^\gamma$ norm of $b$,
and its Kato modulus.
\rev{Moreover, $\nabla_xG_b^D$ extends continuously to
\[
 \{(x,y)\in K\times\overline D:x\ne y\},
\]
with value zero for $y\in\partial D$.}
\end{lemma}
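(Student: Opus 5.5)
We apply \cref{lem:localized-gradient-tail} to the section $v=G_b^D(\cdot,y)$ with $g=0$, at a scale set by the distance to the pole. \Cref{prop:green-viscosity} shows that $v$ is a viscosity solution of $(-\Delta)^sv+b\cdot\nabla v=0$ in $D\setminus\{y\}$. \Cref{prop:green-framework} gives that $v$ is continuous there and $v\in L^1_{2s}(\R^n)$. Fix $x\in K$ and $y\ne x$, and put $d_K=\operatorname{dist}(K,\partial D)$. Choose
\[
 r=c_0\min\{|x-y|,d_K\}
\]
with $c_0$ small, for instance $c_0=1/8$. Then $B_{4r}(x)\Subset D\setminus\{y\}$, and $b\in C^\gamma(B_{4r}(x))$ with norm controlled by the $C^\gamma$ norm of $b$ on a fixed compact neighbourhood $K'$ of $K$. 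Since $r\le1$ after shrinking $c_0$ relative to $\operatorname{diam}D$, the scaled drift size $\mathcal B_r$ is bounded uniformly in $x,y$ because $2s-1>0$. Hence the constant in \eqref{eq:localized-gradient-tail} is uniform.

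It remains to bound the two terms on the right of \eqref{eq:localized-gradient-tail}. For the local sup term, on $B_{4r}(x)$ we have $|z-y|\ge|x-y|-4r\ge|x-y|/2$. By \eqref{eq:green-upper}, $\|v\|_{L^\infty(B_{4r}(x))}\lesssim|x-y|^{2s-n}$, so $r^{-1}\|v\|_{L^\infty}\lesssim r^{-1}|x-y|^{2s-n}$. For the tail, split $\R^n\setminus B_{2r}(x)$ into $\{|z-y|<|x-y|/2\}$ and its complement. On the first region $|x-z|\ge|x-y|/2$ and $\int_{B_{|x-y|/2}(y)}|z-y|^{2s-n}\,dz\lesssim|x-y|^{2s}$, which gives a contribution $\lesssim|x-y|^{2s-n-2s}=|x-y|^{-n}$. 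On the complement, $v\lesssim|x-y|^{2s-n}$, and $\int_{|z-x|>2r}|x-z|^{-n-2s}\,dz\lesssim r^{-2s}$, which gives $r^{-2s}|x-y|^{2s-n}$. Multiplying the tail by $r^{2s-1}$, every term is bounded by $r^{-1}|x-y|^{2s-n}$, up to factors $(|x-y|/r)^{2s}$. In the regime $|x-y|\le d_K$ we have $r\simeq|x-y|$, and the bound $C|x-y|^{2s-n-1}$ follows. In the regime $|x-y|>d_K$ we have $r\simeq d_K$ and $|x-y|\le\operatorname{diam}D$. The bound becomes a constant depending on $K,D$, which is $\le C_K|x-y|^{2s-n-1}$ since $|x-y|$ is bounded above. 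The constant thus depends on $D,K$, the Kato modulus through \eqref{eq:green-upper}, and the local $C^\gamma$ norm of $b$, as asserted. Differentiability of $v$ at $x$ comes from the $C^{1,\alpha}$ conclusion of the same lemma.

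For the continuous extension of $\nabla_xG_b^D$, we use a compactness argument. Take $(x_k,y_k)\to(x,y)$ in $\{(x,y)\in K\times\overline D:x\ne y\}$ and a ball $B=B_{4r}(x)$ with $4r<\min\{|x-y|,d_K\}/2$. For large $k$, the sections $v_k=G_b^D(\cdot,y_k)$ are viscosity solutions on $B$. They are uniformly bounded on $B$, and their tails are uniformly bounded by the computation above. \Cref{thm:main}, through \cref{lem:localized-gradient-tail}, gives a uniform $C^{1,\alpha}(B_{r/2}(x))$ bound. By Arzel\`a--Ascoli, a subsequence of $\nabla v_k$ converges uniformly on $B_{r/4}(x)$. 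Joint continuity of $G_b^D$ off the diagonal (\cref{prop:green-framework}) identifies the limit of $v_k$ as $G_b^D(\cdot,y)$, with value $0$ if $y\in\partial D$. Hence $\nabla v_k\to\nabla_xG_b^D(\cdot,y)$, or $\to0$ when $y\in\partial D$, uniformly near $x$. Since the limit is independent of the subsequence, continuity in $(x,y)$ follows.

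The main obstacle is this passage to the limit when $y_k\to y\in\partial D$. There $v_k$ does not converge to a solution with a pole, so we need the tails of $v_k$ to converge to $0$ in $L^1_{2s}$ and not merely pointwise. I would obtain this by dominated convergence using \eqref{eq:green-upper} and the boundary decay $G_0^D(z,y_k)\to0$. The uniform integrable majorant near $y_k$ is again $|z-y_k|^{2s-n}$, integrated against a kernel bounded away from its singularity.
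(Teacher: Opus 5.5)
Your proposal is correct and follows essentially the same route as the paper: \cref{lem:localized-gradient-tail} with $g=0$ at scale $r\simeq\min\{|x-y|,d_K\}$, the local supremum and the tail controlled by \eqref{eq:green-upper} in the two regimes $|x-y|\le d_K$ and $|x-y|>d_K$, and then a uniform $C^{1,\alpha}$ bound with Arzel\`a--Ascoli for the continuous extension. The obstacle you raise at the end does not actually arise: you already identify the limit of $v_k$ on the ball through the continuous boundary extension of $G_b^D$ in \cref{prop:green-framework}, so the compactness step only needs uniform bounds on the tails, not their convergence in $L^1_{2s}$.
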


\begin{proof}
\rev{Set
\[
 r=\frac1{16}\min\{|x-y|,\operatorname{dist}(K,\partial D)\}.
\]
Then $z\mapsto G_b^D(z,y)$ solves the homogeneous equation in
$B_{4r}(x)$ by \cref{prop:green-viscosity}; this ball is separated from
the pole and the boundary.}  Applying \cref{lem:localized-gradient-tail} with $g=0$ and
using that the Green function vanishes outside $D$ gives
\begin{align}\label{eq:green-gradient-local-step}
 |\nabla_xG_b^D(x,y)|
 &\le \frac{C}{r}\sup_{B_{4r}(x)}G_b^D(\cdot,y)\notag\\
 &\quad+C r^{2s-1}
 \int_{D\setminus B_{2r}(x)}
 \frac{G_b^D(z,y)}{|x-z|^{n+2s}}\,\differential z.
\end{align}
Suppose first that $|x-y|\le\operatorname{dist}(K,\partial D)$.  Then
\rev{$r=|x-y|/16$}.  By \eqref{eq:green-upper},
\[
 \sup_{B_{4r}(x)}G_b^D(\cdot,y)\le C|x-y|^{2s-n}.
\]
For the tail term, scaling $z=x+|x-y|w$ and using again
\eqref{eq:green-upper} give
\[
 \int_{D\setminus B_{2r}(x)}
 \frac{G_b^D(z,y)}{|x-z|^{n+2s}}\,\differential z
 \le C|x-y|^{-n}.
\]
Consequently, \eqref{eq:green-gradient-local-step} gives
$|\nabla_xG_b^D(x,y)|\le C|x-y|^{2s-n-1}$.

If $|x-y|>\operatorname{dist}(K,\partial D)=:d_K$, choose
\rev{$r=d_K/16$}.  \rev{Then $B_{4r}(x)\Subset D$ and
$|z-y|\ge |x-y|-|z-x|\ge3d_K/4$ for $z\in B_{4r}(x)$.}  Hence
$\sup_{B_{4r}(x)}G_b^D(\cdot,y)\le C_K$.  The tail integral in
\eqref{eq:green-gradient-local-step} is finite and bounded by $C_K$ by
splitting $D$ into $B_{d_K/4}(y)$ and its complement and using
\eqref{eq:green-upper}; near $y$ the factor $|x-z|^{-n-2s}$ is bounded,
whereas away from $y$ the Green kernel is bounded.  Thus
$|\nabla_xG_b^D(x,y)|\le C_K$.  Since in the present case
$|x-y|$ is bounded above by $\operatorname{diam}D$ and below by $d_K$,
this is equivalent, after changing $C_K$, to
\[
 |\nabla_xG_b^D(x,y)|\le C_K|x-y|^{2s-n-1}.
\]
This proves
\eqref{eq:green-gradient}.

\rev{It remains to prove the asserted joint continuity.  Fix an
off-diagonal point $(x,y)$ and a ball $B_{4\rho}(x)$ separated from $y$ and
$\partial D$.  If $y_j\to y$, \cref{prop:green-framework} gives
$G_b^D(\cdot,y_j)\to G_b^D(\cdot,y)$ uniformly on
$\overline{B_{4\rho}(x)}$.  The localized estimate, \eqref{eq:green-upper},
and the uniform separation from the poles give a uniform
$C^{1,\alpha}$ bound on $B_\rho(x)$.  Compactness in $C^1$ therefore yields
$\nabla_xG_b^D(\cdot,y_j)\to\nabla_xG_b^D(\cdot,y)$ locally uniformly.
The same argument for $y_j\to y\in\partial D$ uses the boundary extension
in \cref{prop:green-framework} and gives convergence of the gradients to
zero.}
\end{proof}

\rev{We now prove the Green-operator theorem stated in
\cref{thm:measure-potential}.}

\begin{proof}[Proof of \cref{thm:measure-potential}]
\begingroup
\emph{Step 1: extension of the Green operator.}
Define $u=\mathcal G_b\mu$ by \eqref{eq:green-potential-solution} wherever
the integral is absolutely convergent.  By \eqref{eq:green-upper}, Tonelli's
theorem, and the layer-cake identity,
\[
 |u(x)|\le C\int_D|x-y|^{2s-n}\,\differential|\mu|(y)
 \le C\I_{2s}^{|\mu|}(x,\operatorname{diam}D).
\]
This proves \eqref{eq:u-potential-bound} at every point where its right-hand
side is finite.  Moreover,
\[
 \int_D|u(x)|\,\differential x
 \le C\int_D\int_D|x-y|^{2s-n}\,\differential x\,
       \differential|\mu|(y)
 \le C|\mu|(D).
\]
Thus the potential is defined almost everywhere and belongs to $L^1(D)$.
Linearity is immediate, and positivity of $G_b^D$ shows that
$\mu\ge0$ implies $\mathcal G_b\mu\ge0$.

\emph{Step 2: identification and estimate of the weak gradient.}
Fix $K\Subset D$ and set
\[
 V_\mu(x):=\int_D\nabla_xG_b^D(x,y)\,\differential\mu(y)
\]
wherever this integral is absolutely convergent.  By
\cref{lem:green-gradient},
\[
 |V_\mu(x)|
 \le C_K\int_D|x-y|^{2s-n-1}\,\differential|\mu|(y)
 \le C_K\I_{2s-1}^{|\mu|}(x,2\operatorname{diam}D).
\]
This proves \eqref{eq:gradient-potential-bound}.  Since $2s-1>0$, Tonelli's
theorem also gives
\[
 \int_K|V_\mu(x)|\,\differential x\le C_K|\mu|(D).
\]
Together with Step~1, this is \eqref{eq:green-measure-operator-bound}.

To identify $V_\mu$ as the weak gradient, let
$\psi\in C_c^\infty(K;\R^n)$, take
$0<\varepsilon<\operatorname{dist}(K,\partial D)/2$, and remove the set
$|x-y|\le\varepsilon$.  Ordinary integration by parts away from the pole
gives
\begin{align*}
 &\int_D\int_{|x-y|>\varepsilon}
 G_b^D(x,y)\,\divergence\psi(x)
 \,\differential x\,\differential\mu(y)\\
 &\quad=-\int_D\int_{|x-y|>\varepsilon}
 \nabla_xG_b^D(x,y)\cdot\psi(x)
 \,\differential x\,\differential\mu(y)+E_\varepsilon.
\end{align*}
The boundary contribution is supported on $|x-y|=\varepsilon$ and obeys
\[
 |E_\varepsilon|
 \le C\varepsilon^{2s-1}|\mu|(D)\|\psi\|_{L^\infty}.
\]
The two volume integrals converge absolutely as
$\varepsilon\downarrow0$, by the kernel bounds of orders $2s$ and $2s-1$.
Consequently,
\[
 \int_Du\,\divergence\psi\,\differential x
 =-\int_DV_\mu\cdot\psi\,\differential x.
\]
Hence $u\in W^{1,1}_{\mathrm{loc}}(D)$ and
$V_\mu=\nabla u$ almost everywhere.

\emph{Step 3: the distributional equation.}
Let $\varphi\in C_c^\infty(D)$.  The Green identity
\eqref{eq:green-adjoint-identity} and Fubini's theorem yield
\begin{align*}
 &\int_Du(x)(-\Delta)^s\varphi(x)\,\differential x
 +\int_Db(x)\cdot\nabla u(x)\,\varphi(x)\,\differential x\\
 &\quad=\int_D\left[\int_DG_b^D(x,y)(-\Delta)^s\varphi(x)
   \,\differential x\right.\\
 &\hspace{5.7em}\left.
   +\int_Db(x)\cdot\nabla_xG_b^D(x,y)\,\varphi(x)
   \,\differential x\right]\differential\mu(y)\\
 &\quad=\int_D\varphi(y)\,\differential\mu(y).
\end{align*}
Both interchanges of integration are absolutely justified:
$(-\Delta)^s\varphi$ is bounded on $D$, \eqref{eq:green-upper} controls
the first kernel, and $b$ is bounded on $\operatorname{supp}\varphi$ while
\cref{lem:green-gradient} controls the second.  This proves
\eqref{eq:measure-distributional}.

\emph{Step 4: weak-* stability and the SOLA characterization.}
Suppose that $\mu_j\stackrel{*}{\rightharpoonup}\mu$ in
$\M(\overline D)$ and that
$M:=\sup_j|\mu_j|(D)<\infty$.  Fix $K\Subset D$.  Choose
$\eta\in C^\infty([0,\infty))$ with $0\le\eta\le1$,
$\eta=0$ on $[0,1]$, and $\eta=1$ on $[2,\infty)$.  For
$\varepsilon>0$ define the smoothly truncated kernels
\[
 G_\varepsilon(x,y)
 :=\eta\!\left(\frac{|x-y|}{\varepsilon}\right)G_b^D(x,y),
 \qquad
 H_\varepsilon(x,y)
 :=\eta\!\left(\frac{|x-y|}{\varepsilon}\right)
   \nabla_xG_b^D(x,y).
\]
By \cref{prop:green-framework,lem:green-gradient}, these kernels extend
continuously and boundedly to $K\times\overline D$.  The
Stone--Weierstrass theorem gives uniform approximation there by finite sums
of functions of $x$ times functions of $y$.  Weak-* convergence then gives
\begin{align*}
 \sup_{x\in K}\left|
  \int_DG_\varepsilon(x,y)\,\differential(\mu_j-\mu)(y)
 \right|&\longrightarrow0,\\
 \sup_{x\in K}\left|
  \int_DH_\varepsilon(x,y)\,\differential(\mu_j-\mu)(y)
 \right|&\longrightarrow0.
\end{align*}
The discarded near-diagonal parts satisfy, uniformly in $j$,
\begin{align*}
 \int_K\int_{|x-y|<2\varepsilon}G_b^D(x,y)
  \,\differential|\mu_j|(y)\,\differential x
 &\le C M\varepsilon^{2s},\\
 \int_K\int_{|x-y|<2\varepsilon}|\nabla_xG_b^D(x,y)|
  \,\differential|\mu_j|(y)\,\differential x
 &\le C_K M\varepsilon^{2s-1},
\end{align*}
and the same bounds hold with $\mu$ in place of $\mu_j$.  First letting
$j\to\infty$ and then $\varepsilon\downarrow0$ proves
\eqref{eq:green-measure-stability}.

Every finite measure on $D$ admits an admissible smooth approximation:
apply mollification to its positive and negative parts on an exhaustion of
$D$ and take a diagonal sequence.  Stability then proves existence of a
Green-potential SOLA and convergence in
$W^{1,1}_{\mathrm{loc}}(D)$ for every admissible approximation.  Conversely,
the limit in the definition of a Green-potential SOLA must equal
$\mathcal G_b\mu$ by the same stability statement.  This proves uniqueness.

Finally, optimality already occurs in the drift-free subclass.  Take
$b=0$, $D=B_1$, and $\mu=\delta_0$.  Near the pole,
\[
 G_0^{B_1}(x,0)=\kappa_{n,s}|x|^{2s-n}-H(x),
\]
where $H$ is smooth near the origin.  Thus
$G_0^{B_1}(x,0)\asymp|x|^{2s-n}$ and
$|\nabla_xG_0^{B_1}(x,0)|\asymp|x|^{2s-n-1}$ as $x\to0$.
Neither potential order can therefore be improved over the stated class.
\endgroup
\end{proof}

The standard mapping properties of Riesz potentials immediately give the
following scale of consequences.

\begin{corollary}[weak Lebesgue and Lorentz estimates]
\label{cor:measure-mapping}
Under the assumptions of \cref{thm:measure-potential}, for every
$K\Subset D$,
\begin{equation}\label{eq:weak-estimates}
 u\in L^{\frac n{n-2s},\infty}(D),\qquad
 \nabla u\in L^{\frac n{n-2s+1},\infty}(K),
\end{equation}
with norms bounded by $C_K|\mu|(D)$.  If $\mu=f\,\differential x$ and
$1<q<n/(2s-1)$, then
\begin{equation}\label{eq:strong-gradient-mapping}
 \|\nabla u\|_{L^{q_*}(K)}\le C_K\|f\|_{L^q(D)},
 \qquad q_*:=\frac{nq}{n-(2s-1)q}.
\end{equation}
At the endpoint,
\begin{equation}\label{eq:lorentz-endpoint}
 f\in L^{\frac n{2s-1},1}(D)
 \quad\Longrightarrow\quad
 \nabla u\in L^\infty_{\mathrm{loc}}(D).
\end{equation}
\end{corollary}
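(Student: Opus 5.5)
The plan is to reduce everything to the pointwise potential bounds of \cref{thm:measure-potential} and then quote classical mapping properties of truncated Riesz potentials on bounded sets. By \eqref{eq:u-potential-bound} and \eqref{eq:riesz-kernel-equivalence},
\[
 |u(x)|\le C\int_D|x-y|^{2s-n}\,\differential|\mu|(y)=:C\,I_{2s}|\mu|(x)
 \quad\text{for a.e. }x\in D.
\]
By \eqref{eq:gradient-potential-bound} and $V_\mu=\nabla u$ a.e. on $K$,
\[
 |\nabla u(x)|\le C_K\int_D|x-y|^{2s-1-n}\,\differential|\mu|(y)=:C_K\,I_{2s-1}|\mu|(x)
 \quad\text{for a.e. }x\in K.
\]
Since $D$ is bounded, we may extend $|\mu|$ by zero to $\R^n$ and work with the untruncated Riesz potentials $I_\beta|\mu|=|\cdot|^{\beta-n}*|\mu|$ on $\R^n$. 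These dominate the truncated ones.

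\emph{Weak estimates.} We use the standard bound $\|I_\beta\nu\|_{L^{n/(n-\beta),\infty}(\R^n)}\le C(n,\beta)|\nu|(\R^n)$ for $0<\beta<n$. One proof runs as follows. Fix $t>0$ and split the kernel at a radius $\rho$. The near part has $L^1$ norm at most $C\rho^\beta|\nu|$, so Chebyshev's inequality controls its level set at height $t/2$. The far part is bounded pointwise by $\rho^{\beta-n}|\nu|$, so choosing $\rho$ with $\rho^{\beta-n}|\nu|=t/2$ makes its level set empty. Optimizing in $\rho$ gives $|\{I_\beta\nu>t\}|\le C(|\nu|/t)^{n/(n-\beta)}$. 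Applying this with $\beta=2s$ on $D$ and $\beta=2s-1>0$ on $K$ gives \eqref{eq:weak-estimates}, with norms bounded by $C_K|\mu|(D)$.

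\emph{Strong mapping.} If $\mu=f\,\differential x$ with $f\in L^q(D)$ extended by zero, then $|\nabla u|\le C_K I_{2s-1}|f|$ on $K$. The Hardy--Littlewood--Sobolev inequality with $\beta=2s-1$ and $1<q<n/\beta$ gives $\|I_\beta|f|\|_{L^{q_*}(\R^n)}\le C\|f\|_{L^q}$, where $1/q_*=1/q-\beta/n$. This is \eqref{eq:strong-gradient-mapping}. Alternatively, one can derive it from the weak estimate by Marcinkiewicz interpolation between two admissible exponents, though HLS is cleaner to quote.

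\emph{Endpoint.} For $f\in L^{n/\beta,1}$, the kernel $|x-\cdot|^{\beta-n}$ belongs uniformly in $x$ to the dual Lorentz space $L^{n/(n-\beta),\infty}$. Hölder's inequality in Lorentz spaces (O'Neil) gives $\sup_x I_\beta|f|(x)\le C\|f\|_{L^{n/\beta,1}}$. The pointwise bound is available at every $x\in K$ where the potential is finite, which is here every point. Hence $\nabla u=V_\mu\in L^\infty(K)$ for each $K$, which is \eqref{eq:lorentz-endpoint}.

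The only real obstacle is bookkeeping. The gradient bound holds only for $x\in K$, but the measure ranges over all of $D$, and its constant $C_K$ also carries the local Hölder norm of $b$. So all estimates must be stated on $K$ with $|\mu|(D)$ on the right. The passage from truncated to untruncated potentials is harmless by positivity of the kernels.
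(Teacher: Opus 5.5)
Your proposal is correct and follows the paper's proof. The paper likewise combines the pointwise bounds \eqref{eq:u-potential-bound} and \eqref{eq:gradient-potential-bound} with the classical weak-type, strong-type (Hardy--Littlewood--Sobolev) and Lorentz-endpoint mapping properties of Riesz potentials for $\beta=2s$ and $\beta=2s-1$; your level-set splitting and your use of O'Neil's inequality simply spell out what the paper cites as classical.
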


\begin{proof}
These are the classical weak-type, strong-type, and Lorentz-endpoint
Riesz-potential mappings
\[
 \M\longrightarrow L^{n/(n-\beta),\infty},\qquad
 L^q\longrightarrow L^{nq/(n-\beta q)},\qquad
 L^{n/\beta,1}\longrightarrow L^\infty.
\]
Apply them with $\beta=2s$ and
$\beta=2s-1$ to \eqref{eq:u-potential-bound} and
\eqref{eq:gradient-potential-bound}.
\end{proof}

\begin{corollary}[Morrey and continuity criteria]
\label{cor:measure-morrey}
Assume the hypotheses of \cref{thm:measure-potential}.  Suppose that
$\theta\ge0$ and that, for every $K\Subset D$, there exist constants
$M_K,r_K>0$ such that
\begin{equation}\label{eq:morrey-measure}
 |\mu|(B_r(x))\le M_K r^{n-\theta}
 \qquad\text{for }x\in K\text{ and }0<r<r_K.
\end{equation}
\rev{Then} $u$ is locally bounded when $\theta<2s$, and $\nabla u$ is locally
bounded when $\theta<2s-1$.  If
\begin{equation}\label{eq:vanishing-gradient-potential}
 \lim_{r\downarrow0}\sup_{x\in K}
 \I_{2s-1}^{|\mu|}(x,r)=0
 \qquad\text{for every }K\Subset D,
\end{equation}
then $\nabla u$ has a continuous representative in $D$.
\end{corollary}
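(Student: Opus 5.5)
The argument feeds the Morrey hypothesis \eqref{eq:morrey-measure} into the pointwise potential bounds \eqref{eq:u-potential-bound} and \eqref{eq:gradient-potential-bound} of \cref{thm:measure-potential}, and then proves continuity of the gradient representative $V_\mu$ by a near/far splitting of the kernel.

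\emph{Local boundedness.} Fix $K\Subset D$ and a slightly larger compact set $K'$ with $K\Subset K'\Subset D$. Write $\beta=2s$ or $\beta=2s-1$ and $R=\operatorname{diam}D$ or $2\operatorname{diam}D$. For $x\in K$, split the augmented potential $\I_\beta^{|\mu|}(x,R)$ at $\rho=r_K$.
\begin{itemize}
\item For $\rho<r_K$, use \eqref{eq:morrey-measure} to get the integrand bound $M_K\rho^{\beta-\theta-1}$. Its integral over $(0,r_K)$ is finite, with value $M_Kr_K^{\beta-\theta}/(\beta-\theta)$, exactly when $\theta<\beta$.
\item For $\rho\ge r_K$, bound $|\mu|(B_\rho(x))\le|\mu|(D)$. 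The remaining integral and the terminal term are then at most $C(r_K,R,\beta)|\mu|(D)$.
\end{itemize}
So $\sup_K\I_\beta^{|\mu|}(\cdot,R)<\infty$. With $\beta=2s$, \eqref{eq:u-potential-bound} gives $u\in L^\infty(K)$. With $\beta=2s-1$, \eqref{eq:gradient-potential-bound} gives that $V_\mu$ is defined everywhere on $K$ and bounded there. Since $V_\mu=\nabla u$ a.e., $\nabla u$ is locally bounded.

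\emph{Continuity under \eqref{eq:vanishing-gradient-potential}.} First, \eqref{eq:vanishing-gradient-potential} forces $\I_{2s-1}^{|\mu|}(x,r_0)<\infty$ for small $r_0$, uniformly on $K$. Adding the bounded contribution from $\rho\ge r_0$, $V_\mu$ is defined at every point of $K$. I will show $V_\mu$ is continuous on $K$.

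Fix $x_0\in K$, $\varepsilon>0$ and a small $\delta$, and let $x\in K$ with $|x-x_0|<\delta/4$. Split
\[
V_\mu(x)=\int_{B_\delta(x_0)}\nabla_xG_b^D(x,y)\,\differential\mu(y)+\int_{D\setminus B_\delta(x_0)}\nabla_xG_b^D(x,y)\,\differential\mu(y).
\]
\begin{itemize}
\item \emph{Near part.} Since $B_\delta(x_0)\subset B_{2\delta}(x)$, \cref{lem:green-gradient} and the kernel form \eqref{eq:riesz-kernel-equivalence} bound it by $C_K\I_{2s-1}^{|\mu|}(x,2\delta)$. By \eqref{eq:vanishing-gradient-potential} this is below $\varepsilon$ uniformly in $x\in K$ once $\delta$ is small. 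This uniform smallness is the one place the hypothesis is really used.
\item \emph{Far part.} On $\{x:|x-x_0|\le\delta/4\}\times(\overline D\setminus B_\delta(x_0))$ we have $|x-y|\ge3\delta/4$. By the joint continuity statement of \cref{lem:green-gradient}, $\nabla_xG_b^D$ is therefore uniformly continuous and bounded there, including up to $y\in\partial D$. Dominated convergence against the finite measure $|\mu|$ then makes the far part continuous in $x$.
\end{itemize}
Hence $\limsup_{x\to x_0}|V_\mu(x)-V_\mu(x_0)|\le2\varepsilon$, so $V_\mu$ is continuous on the interior of $K$. Exhausting $D$ by compact sets gives a continuous function on $D$, and it is a representative of $\nabla u$ because $V_\mu=\nabla u$ a.e.

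The main point to handle carefully is the near part. The continuity estimate has to be taken uniformly over nearby points $x$, not just at $x_0$, so the centre shift $B_\delta(x_0)\subset B_{2\delta}(x)$ and the uniform-in-$K$ form of \eqref{eq:vanishing-gradient-potential} are exactly what is needed; the rest is bookkeeping.
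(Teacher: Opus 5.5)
Your proposal is correct and follows essentially the same route as the paper. You insert the Morrey bound into the truncated potentials, with the small-scale part controlled by \eqref{eq:morrey-measure} and the large-scale and terminal parts by $|\mu|(D)$. You then get continuity of $V_\mu$ from a near/far splitting, where the near part is uniformly small by \eqref{eq:vanishing-gradient-potential} after recentring the ball, and the far part is continuous by dominated convergence using the bound and joint continuity in \cref{lem:green-gradient}.
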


\begin{proof}
Inserting \eqref{eq:morrey-measure} into
\eqref{eq:riesz-potential-main} gives
$\I_\beta^{|\mu|}(x,R)\le CM_K R^{\beta-\theta}$ for sufficiently small
$R$ whenever $\theta<\beta$.  On each compact subset, the complementary
large-scale part is uniformly bounded because $\mu$ is finite.  This proves
the boundedness statements.  For continuity,
fix $x_0\in K$ and choose $r>0$ so that
$B_{4r}(x_0)\Subset D$.  For $x\in B_{r/2}(x_0)$ use the fixed splitting
\[
 D=B_{2r}(x_0)\cup\bigl(D\setminus B_{2r}(x_0)\bigr).
\]
The contribution of $B_{2r}(x_0)$ to both $\nabla u(x)$ and
$\nabla u(x_0)$ is bounded by
\[
 C\sup_{z\in B_r(x_0)}\I_{2s-1}^{|\mu|}(z,4r),
\]
which tends to zero uniformly as $r\downarrow0$ by
\eqref{eq:vanishing-gradient-potential}.  \rev{Since
$B_{2r}(x_0)\subset B_{4r}(x)$ for $x\in B_{r/2}(x_0)$, the same
truncated potential controls both near-field integrals.}  On the fixed far region,
$\nabla_xG_b^D(x,y)$ is continuous in $x$ by
\cref{lem:green-gradient}.  Moreover,
\cref{lem:green-gradient} supplies a bounded integrable majorant there.
Dominated convergence therefore gives continuity of the far contribution
at $x_0$.  Since $x_0$ is arbitrary, $\nabla u$ has a continuous
representative in $D$.
\end{proof}

\begin{remark}[the critical endpoint]
When $s=1/2$, the formal gradient order is $2s-1=0$.  Already for the
whole-space equation,
\[
 \nabla(-\Delta)^{-1/2}\mu
\]
is a vector Riesz transform of $\mu$, hence a zero-order Calder\'on--Zygmund
operator.  Thus no estimate by a positive-order potential is available in
general.  For finite measures the natural endpoint conclusion is weak
$L^1$, subject in the drifted case to the corresponding critical singular
integral theory.  This is analytically distinct from the positive-potential
estimate \eqref{eq:gradient-potential-bound}, and it is not claimed here for
a general critical drift.
\end{remark}

\begin{remark}[relation with nonlinear potential theory]
For nonlocal equations with measure data, nonlinear Wolff-potential
estimates and their Lorentz and continuity consequences were developed by
Kuusi, Mingione and Sire \cite{KuusiMingioneSire2015}.  In the linear case
$p=2$, the Wolff potential reduces, up to the standard normalization, to the
Riesz potentials appearing above.  The point of
\cref{thm:measure-potential} is that the drifted Green kernel preserves the
same sharp orders in the subcritical regime.
\end{remark}

\section{Boundary regularity and the effect of the drift}
\label{sec:boundary}

\rev{We recall the drift-free boundary estimate, apply it when
$\operatorname{supp}b\Subset\Omega$, and discuss the obstruction when the
drift reaches the boundary.}

\begin{proposition}[drift-free boundary regularity]\label{prop:boundary-base}
Let $0<s<1$, let $\Omega\subset\R^n$ be a bounded $C^{1,1}$ domain, and let
$f\in L^\infty(\Omega)$.  Suppose that $u$ is a bounded viscosity solution of
\begin{equation}\label{eq:dirichlet-pure}
 \begin{cases}
 (-\Delta)^su=f&\text{in }\Omega,\\
 u=0&\text{in }\R^n\setminus\Omega.
 \end{cases}
\end{equation}
Let $d(x)=\operatorname{dist}(x,\R^n\setminus\Omega)$ in $\Omega$ and extend
$d$ by zero in $\R^n\setminus\Omega$.  Then $u\in C^s(\R^n)$ and, for every
$\varepsilon\in(0,s)$,
\begin{equation}\label{eq:boundary-estimate}
 \|u\|_{C^s(\R^n)}
 +\left\|\frac{u}{d^s}\right\|_{C^{s-\varepsilon}(\overline\Omega)}
 \le C\|f\|_{L^\infty(\Omega)},
\end{equation}
where $C$ depends only on $n,s,\varepsilon,\Omega$ and the normalization of
the fractional Laplacian.
\end{proposition}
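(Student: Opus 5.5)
This is the Ros-Oton--Serra boundary regularity theorem for the fractional Laplacian. I would derive it from their work rather than reprove it. The plan has three steps.

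\emph{Step 1: reduce to the energy/distributional setting.} A bounded viscosity solution of \eqref{eq:dirichlet-pure} with $f\in L^\infty(\Omega)$ is, by comparison, the unique bounded viscosity solution with zero exterior data. The weak solution in $H_0^s(\Omega)$, obtained by Lax--Milgram, is bounded by the Stampacchia argument used in the proof of \cref{cor:weak-viscosity-equivalence} with $b=0$. By the drift-free equivalence \cite[Theorem~1]{ServadeiValdinoci2014} it is also a viscosity solution. Uniqueness then identifies $u$ with this weak solution, so the Ros-Oton--Serra theory applies.

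\emph{Step 2: global $C^s$ bound.} I would construct the standard barrier. Near a boundary point, use the exterior-ball condition, which holds for $C^{1,1}$ domains with uniform radius. The function $(|x-z|^2-\rho^2)_+^s$ solves $(-\Delta)^s=\mathrm{const}$ outside a ball, which gives a supersolution with growth $d^s$. Comparison yields $|u|\le C\|f\|_{L^\infty}d^s$. Next, combine this with the interior estimate on balls $B_{d(x)/2}(x)$, rescaled so that $r^s[u]_{C^s(B_{r/2})}\lesssim\|u\|_{L^\infty(B_r)}+r^{2s}\|f\|_\infty+\Tail$. This gives $[u]_{C^s(\R^n)}\le C\|f\|_{L^\infty}$ by the usual dyadic patching: pairs of points far apart compared with their distances to the boundary are handled by the $d^s$ bound. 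The tail is controlled because $|u|\lesssim d^s$ globally.

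\emph{Step 3: H\"older regularity of $u/d^s$.} This is the main obstacle. It requires a boundary Harnack or Liouville-type argument: solutions in a half-space with growth $|x|^{s+\beta}$ for $\beta<s$ are multiples of $(x_n)_+^s$. A blow-up and compactness argument then gives the expansion $|u(x)-Q(x_0)d^s(x)|\le C|x-x_0|^{2s-\varepsilon}$ at each $x_0\in\partial\Omega$. Combined with interior estimates for $u/d^s$, using that $d^s$ is smooth inside with controlled derivatives, this gives the $C^{s-\varepsilon}$ bound on $\overline\Omega$. The loss of $\varepsilon$ comes from the $C^{1,1}$ flattening error. Since this is exactly Ros-Oton and Serra's boundary theorem for the fractional Laplacian in $C^{1,1}$ domains, I would cite it for Step 3 and for the constant dependence, with the normalization constant $c_{n,s}$ entering only through $C$. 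The real content to check is that their hypotheses, namely weak solutions with bounded $f$, match our viscosity setting, which Step 1 provides.
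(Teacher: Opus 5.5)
Your proposal takes the same route as the paper, whose proof is simply the citation of \cite[Theorem~1.2]{RosOtonSerra2016} for both the global $C^s$ bound and the $C^{s-\varepsilon}$ estimate of $u/d^s$. So your Steps~2--3 are already covered by that citation. Your Step~1, which identifies the viscosity solution with the bounded energy solution via comparison and \cite[Theorem~1]{ServadeiValdinoci2014}, is extra care the paper omits; it needs $f$ continuous, which does hold for $g=f-b\cdot\nabla u$ in the proof of \cref{thm:boundary-drift}.

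One correction if you write out Step~2: $(|x-z|^2-\rho^2)_+^s$ grows like $|x|^{2s}$, so it is not in $L^1_{2s}(\R^n)$ and its fractional Laplacian is undefined. The explicit exterior barrier is its Kelvin transform $|x-z|^{-n}(|x-z|^2-\rho^2)_+^s$, whose fractional Laplacian is a positive multiple of $|x-z|^{-n-2s}$ outside $B_\rho(z)$. That suffices for the comparison giving $|u|\le C\|f\|_{L^\infty}d^s$.
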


\begin{proof}
The fractional Laplacian is a symmetric stable operator whose spectral
measure is smooth and satisfies the ellipticity assumptions in
\cite{RosOtonSerra2016}.  Therefore \eqref{eq:boundary-estimate} is
the fractional-Laplacian specialization of
\cite[Theorem~1.2]{RosOtonSerra2016}.  The global $C^s$ estimate and the
$C^{s-\varepsilon}$ estimate for the quotient are both part of that theorem.
\end{proof}

The interior estimate proved above allows a genuine drift term, provided it
does not reach the boundary.

\begin{proof}[Proof of \cref{thm:boundary-drift}]
Choose open sets
\[
 K\Subset U_1\Subset U_2\Subset\Omega.
\]
Cover $\overline{U_1}$ by finitely many balls to which
\cref{thm:main} applies.  \rev{The radii may be chosen uniformly from below
in terms of $\operatorname{dist}(U_1,\partial U_2)$, so the finite-cover
constant depends only on the quantities stated in the theorem.}  For any fixed admissible interior exponent
$\alpha\in(0,\gamma)$ this gives
\begin{equation}\label{eq:gradient-on-support}
 \|\nabla u\|_{L^\infty(K)}
 \le C\bigl(
 \|u\|_{L^\infty(\R^n)}+
 \|f\|_{C^\gamma(U_2)}\bigr).
\end{equation}
The interior estimate, applied locally at every point of $\Omega$, makes
$\nabla u$ pointwise well defined throughout $\Omega$.  Define simply
\[
 g=f-b\cdot\nabla u\qquad\text{in }\Omega.
\]
Since $b=0$ in $\Omega\setminus K$, only the values of $\nabla u$ on $K$
enter this expression.  Hence
\eqref{eq:gradient-on-support} implies
\begin{align}\label{eq:g-boundary}
 \|g\|_{L^\infty(\Omega)}
 &\le \|f\|_{L^\infty(\Omega)}
 +\|b\|_{L^\infty(K)}
   \|\nabla u\|_{L^\infty(K)}\notag\\
 &\le C\bigl(
 \|u\|_{L^\infty(\R^n)}+
 \|f\|_{C^\gamma(\Omega)}\bigr).
\end{align}
\rev{If a $C^2$ test function $\phi$ touches $u$ at $x$, differentiability
gives $\nabla\phi(x)=\nabla u(x)$.  Thus the viscosity inequalities for the
original equation are those for $(-\Delta)^su=g$.}  Hence $u$ solves
\[
 \begin{cases}
 (-\Delta)^su=g&\text{in }\Omega,\\
 u=0&\text{in }\R^n\setminus\Omega,
 \end{cases}
\]
with $g\in L^\infty(\Omega)$.  Applying \cref{prop:boundary-base} and then
\eqref{eq:g-boundary} proves \eqref{eq:boundary-drift-estimate}--
\eqref{eq:boundary-drift-norm}.
\end{proof}

Without the support separation in \cref{thm:boundary-drift}, one cannot
apply \cref{prop:boundary-base} by simply writing
\[
 (-\Delta)^su=f-b\cdot\nabla u.
\]
Indeed, the expected boundary size $u\sim d^s$ gives
$|\nabla u|\sim d^{s-1}$, so the new right-hand side need not be bounded.
Thus the hypotheses of \cref{prop:boundary-base} are not preserved by this
rewriting.

There are two natural ways to weaken the support condition in
\cref{thm:boundary-drift}.  First, in the subcritical range $s>1/2$, one may
place the drift in the gradient Kato class
\begin{equation}\label{eq:kato}
 \lim_{r\downarrow0}\sup_{x\in\R^n}
 \int_{B_r(x)}
 \frac{|b(y)|}{|x-y|^{n+1-2s}}\,\differential y=0.
\end{equation}
Every bounded drift satisfies \eqref{eq:kato}, since the integral is bounded
by $C\|b\|_\infty r^{2s-1}$.  For such gradient perturbations, sharp Green
function estimates and a boundary Harnack principle with the $d^s$ decay
rate are known; see Chen, Kim and Song \cite{ChenKimSong2012}.  These results
strongly support a drifted boundary theory when $s>1/2$.  They do not by
themselves give the full inhomogeneous
$C^{s-\varepsilon}$ norm in \eqref{eq:boundary-drift-norm}; obtaining that
estimate requires a quantitative boundary Schauder argument for the Green
potential.

Second, one may impose a geometric decay condition on the normal drift.  A
natural scale is
\begin{equation}\label{eq:normal-decay}
 |b(x)\cdot\nabla d(x)|\le C d(x)^\kappa,
 \qquad \kappa\ge1-s.
\end{equation}
Indeed, if $u=d^sq$ near the boundary, the leading normal part of the drift
is
\[
 s d^{s-1}q\,b\cdot\nabla d,
\]
which is bounded under \eqref{eq:normal-decay}.  The stronger assumptions
$b\in C^1(\overline\Omega)$ and $b=0$ on $\partial\Omega$ imply
$|b(x)|\le C d(x)$ and hence satisfy this scaling requirement.  Condition
\eqref{eq:normal-decay} identifies a plausible weighted-Schauder extension;
we record it as a direction for further work, not as a theorem proved here.

The issue is especially substantial at $s=1/2$, where the drift is of the
same order as the diffusion.  In critical problems the boundary or free
boundary homogeneity can depend on the normal component of the drift; this
phenomenon is explicit in the critical-drift analysis of Fern\'andez-Real
and Ros-Oton \cite{FernandezRealRosOton2018}.  Consequently a universal
$d^{1/2}$ profile, and hence a universal estimate for $u/d^{1/2}$, should not
be asserted for \eqref{eq:equation} with an arbitrary critical drift.

When $s>1/2$, scaling makes the drift lower order and suggests that $d^s$
remains the leading boundary profile.  A proof, however, requires weighted
boundary Schauder estimates or a boundary blow-up argument controlling the
singular term $b\cdot\nabla u$; it does not follow from the interior
absorption argument in \cref{sec:proof}.  We therefore state
the drifted estimate under the support-separation hypothesis of
\cref{thm:boundary-drift}, and leave a sharp theorem for drifts reaching the
boundary outside the scope of this paper.

\subsection*{Acknowledgments}
This work was supported by the National Natural Science Foundation of China
(No.~12471128).

\subsection*{Conflict of interest}
The authors declare that there is no conflict of interest.

\subsection*{Data availability}
Data sharing is not applicable to this article, as no datasets were generated
or analyzed.

\end{document}